\documentclass[11pt,oneside,reqno]{amsart}
\pdfoutput=1

\usepackage[OT2,T1]{fontenc}
\usepackage[utf8]{inputenc}
\usepackage{amssymb,bm}

\usepackage[dvipsnames]{xcolor}
\usepackage[
    colorlinks=true,
    linkcolor=Maroon,
    citecolor=JungleGreen,
    urlcolor=NavyBlue]{hyperref}

\usepackage{xcolor,colortbl}
\usepackage[capitalize]{cleveref}

\usepackage{enumitem}
\usepackage{dsfont}
\usepackage{tikz}
\usetikzlibrary{patterns}
\usepackage{subcaption}
\usetikzlibrary{arrows}
\usepackage{multirow}

\usepackage{lipsum}

\usepackage[normalem]{ulem}
\usepackage{cases}

\newtheorem{theorem}{Theorem}[section]

\newtheorem{lemma}[theorem]{Lemma}
\newtheorem{proposition}[theorem]{Proposition}
\newtheorem{corollary}[theorem]{Corollary}

\newtheorem{remark}[theorem]{Remark}

\numberwithin{equation}{section}


\DeclareMathOperator{\Ad}{Ad}
\DeclareMathOperator{\im}{Im}
\DeclareMathOperator{\re}{Re}
\DeclareMathOperator{\id}{id}
\DeclareMathOperator{\supp}{supp}
\DeclareMathOperator{\const}{const.}
\DeclareMathOperator{\Hess}{Hess}

\newcommand{\OpNorme}[3]{
\|{#1}\|_{L^{{#2}}(\mathbb{X})
\rightarrow{L^{{#3}}(\mathbb{X})}}}

\newcommand\frakACL{\overline{\mathfrak{a}^{+}}}
\newcommand\Deltarad{\Delta^{\textnormal{rad}}}
\newcommand\DeltaradP{
\Delta_{\mathfrak{p}}^{\textnormal{rad}}}
\newcommand\DeltaA{\Delta_{\mathfrak{a}}}
\newcommand\DeltaP{\Delta_{\mathfrak{p}}}
\newcommand\nablaA{\nabla_{\mathfrak{a}}}
\newcommand\nablaP{\nabla_{\mathfrak{p}}}

\setcounter{tocdepth}{2}
\makeatletter
\def\@tocline#1#2#3#4#5#6#7{\relax
  \ifnum #1>\c@tocdepth 
  \else
    \par \addpenalty\@secpenalty\addvspace{#2}%
    \begingroup \hyphenpenalty\@M
    \@ifempty{#4}{%
      \@tempdima\csname r@tocindent\number#1\endcsname\relax
    }{%
      \@tempdima#4\relax
    }%
    \parindent\z@ \leftskip#3\relax
    \advance\leftskip\@tempdima\relax
    \rightskip\@pnumwidth plus4em \parfillskip-\@pnumwidth
    #5\leavevmode\hskip-\@tempdima
      \ifcase #1
       \or\or \hskip 2em \or \hskip 2em \else \hskip 3em \fi%
      #6\nobreak\relax
    \dotfill\hbox to\@pnumwidth{\@tocpagenum{#7}}\par
    \nobreak
    \endgroup
  \fi}
\makeatother

\usepackage[
      backend=biber,
      style=alphabetic,
      giveninits=true,
      maxbibnames=10,
      sorting=nyt,
      url=false,
      eprint=false,
      isbn=false
      ]{biblatex}

\DeclareFieldFormat[article,incollection]{title}{#1}
\DeclareFieldFormat{date}{#1}
\DeclareFieldFormat{pages}{{#1}}
\DeclareFieldFormat{doi}{
\href{https://www.doi.org/#1}{Doi}}
\DeclareFieldFormat{url}{
\href{#1}{Doi}}

\renewbibmacro{in:}{%
  \ifentrytype{article}{}{\printtext{\bibstring{in}\intitlepunct}}}


\addbibresource{AZ24.bib}



\begin{document}
\title[Wave equation on general 
noncompact symmetric spaces]
{Wave equation on general 
noncompact symmetric spaces}

\author{Jean-Philippe ANKER \& Hong-Wei ZHANG}

\subjclass[2020]
{22E30, 35L05, 43A85, 43A90}

\keywords{
noncompact symmetric space of higher rank,
semi-linear wave equation, dispersive property,
Strichartz inequality, global well-posedness}

\thanks{The second author acknowledges financial supports from the University of Orléans during his Ph.D. and from the Methusalem Programme \textit{Analysis and Partial Differential Equations (Grant number 01M01021)} during his postdoc stay at Ghent University.}

\maketitle

\vspace{-10pt}
\begin{center}
\it
Dedicated to the memory of Robert S. Strichartz (1943-2021)
\end{center}

\begin{abstract}
We establish sharp pointwise kernel estimates 
and dispersive properties for the wave equation 
on noncompact symmetric spaces of general rank.
This is achieved by combining the stationary phase method
and the Hadamard parametrix, and in particular,
by introducing a subtle spectral 
decomposition, which allows us to overcome a
well-known difficulty in higher rank analysis, 
namely the fact that the Plancherel density
is not a differential symbol in general.
Consequently, we deduce the Strichartz inequality
for a large family of admissible pairs and prove 
global well-posedness results for the corresponding
semi-linear equation with low regularity data as 
on hyperbolic spaces.
\end{abstract}

\setcounter{tocdepth}{1}
\tableofcontents

\section{Introduction}
This paper is devoted to prove sharp-in-time kernel estimates 
and dispersive properties for the wave equation on 
noncompact symmetric spaces of higher rank.
Consequently, we prove the Strichartz inequality and 
study their applications to associated semi-linear Cauchy
problems. 
Relevant theories are well established on 
Euclidean spaces, see for instance 
\cite{Kap1994,LiSo1995,GLS1997,KeTa1998,DGK2001}, 
and the references therein.

Given the rich Euclidean results, several works have 
been made in other settings. We are interested in
Riemannian symmetric spaces of noncompact type, where 
relevant questions are now well answered in rank one, 
see for instance 
\cite{Fon1997,Ion2000,Tat2001,MeTa2011,MeTa2012,APV2012,AnPi2014} 
on hyperbolic spaces, and \cite{APV2015} on Damek-Ricci 
spaces. 
A first study of the wave equation on general symmetric
spaces of higher rank was carried out in \cite{Has2011},
where some non optimal estimates were obtained under 
a strong smoothness assumption. 
Recently, sharp-in-time kernel estimates and dispersive 
properties have been proven in \cite{Zha2021} 
on noncompact symmetric spaces $G/K$, with $G$ complex. 
In this case, the Harish-Chandra $\mathbf{c}$-function 
and the spherical functions have elementary expressions,
which is not the case in general.

In this paper, we establish pointwise wave kernel 
estimates and dispersive properties for the wave equation 
on general noncompact symmetric spaces, which are
sharp in time and extend previous results obtained
on real hyperbolic spaces \cite{APV2012,AnPi2014} 
to higher rank. 
The main challenge is that the Plancherel density 
involved in the wave kernel is not a polynomial, 
nor even a differential symbol in general. To bypass 
this problem, we consider barycentric decompositions 
of the Weyl chambers into subcones and differentiate 
in each subcone along a well chosen direction.

For suitable $\sigma \in \mathbb{C}$, 
we consider the wave operator 
$W_{t}^{\sigma}=(-\Delta)^{- \frac{\sigma}{2}}
e^{it \sqrt{- \Delta}}$ 
associated to the Laplace-Beltrami operator $\Delta$ 
on a $d$-dimensional noncompact symmetric space $\mathbb{X}=G/K$. 
To avoid possible singularities (see \cref{subsection 2}),
we consider actually the analytic family of operators 
\begin{align}\label{intro-wave operator}
    \widetilde{W}_{t}^{\sigma}
    = \frac{e^{\sigma^2}}
        {\Gamma (\frac{d+1}{2}-\sigma)}
        (- \Delta)^{- \frac{\sigma}{2}} 
        e^{it \sqrt{- \Delta}}
\end{align}
in the vertical strip $0\le\re\sigma\le\frac{d+1}{2}$. 
Let us denote by $\widetilde{\omega}_{t}^{\sigma}$ its 
$K$-bi-invariant convolution kernel. Our first main result
is the following pointwise estimate, which summarizes Theorems
\ref{theorem subsection 1}, \ref{theorem subsection 2},
and \ref{theorem subsection 3} proved in 
\cref{section kernel estiamtes}.

\begin{theorem}
[Pointwise kernel estimates]\label{main thm1}
Let $d\ge3$ and $\sigma\in\mathbb{C}$ with
$\re\sigma=\frac{d+1}{2}$. There exist $C>0$ and
$N\in\mathbb{N}$ such that the following estimates 
hold for all $t\in\mathbb{R}^{*}$ and $x\in\mathbb{X}$:
\begin{align*}
    |\widetilde{\omega}_{t}^{\sigma}(x)| 
    \le C (1+|x^{+}|)^{N} 
        e^{-\langle\rho,x^{+}\rangle}
    \begin{cases}
        |t|^{-\frac{d-1}{2}} 
        &\textnormal{if \,} 0<|t|<1,\\
        |t|^{-\frac{D}{2}} 
        &\textnormal{if \,} |t|\ge1,
    \end{cases}
\end{align*}
where $x^{+}\in\overline{\mathfrak{a}^{+}}$ denotes 
the radial component of $x$ in the Cartan decomposition,
and $D=\ell+2|\Sigma_{r}^+|$ is the so-called dimension 
at infinity of $\mathbb{X}$.
\end{theorem}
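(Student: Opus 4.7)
The natural starting point is the inversion formula for the spherical Fourier transform, which expresses $\widetilde{\omega}_{t}^{\sigma}$ as
\begin{align*}
    \widetilde{\omega}_{t}^{\sigma}(x)
    = \frac{e^{\sigma^2}}{\Gamma(\frac{d+1}{2}-\sigma)}
      \int_{\mathfrak{a}^*}
      |\lambda|^{-\sigma}\, e^{it|\lambda|}\,
      \varphi_{\lambda}(x)\, |\mathbf{c}(\lambda)|^{-2}\, d\lambda,
\end{align*}
up to a normalization constant. I would split the analysis along two natural dichotomies: small time ($0<|t|<1$) versus large time ($|t|\geq 1$), and low-frequency versus high-frequency spectral region through a smooth cutoff around $|\lambda|=1$. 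The common exponential prefactor $e^{-\langle\rho,x^{+}\rangle}$ is produced at once by the standard bound $|\varphi_{\lambda}(x)|\leq\varphi_{0}(x)\lesssim(1+|x^{+}|)^{|\Sigma_r^+|}\, e^{-\langle\rho,x^{+}\rangle}$, so the core task is to extract the correct $|t|$-decay while absorbing any residual growth in $|x^{+}|$ into a polynomial factor $(1+|x^{+}|)^{N}$.

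In the small-time regime the high-frequency part of the kernel dominates. I would treat it by a Hadamard parametrix construction for the half-wave propagator $e^{it\sqrt{-\Delta}}$, writing the kernel to leading order as an oscillatory integral whose phase is that of the Euclidean half-wave and whose amplitude is a classical symbol of order $-\sigma$. The analytic regularization through $e^{\sigma^2}/\Gamma(\frac{d+1}{2}-\sigma)$ combined with the factor $|\lambda|^{-\sigma}$ is precisely tailored so that, at the critical line $\re\sigma=\frac{d+1}{2}$, a standard stationary-phase computation yields the Euclidean rate $|t|^{-(d-1)/2}$. The low-frequency remainder at small time is smooth and of lower order, and is harmless.

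For large times the low-frequency region governs the decay, because the vanishing of the Plancherel density at the walls forces a faster stationary-phase gain. After localizing to $|\lambda|\lesssim 1$, the kernel reduces to an oscillatory integral of the form
\begin{align*}
    \int_{\mathfrak{a}^*}
    e^{it|\lambda|}\, b(\lambda,x)\,
    |\mathbf{c}(\lambda)|^{-2}\, d\lambda,
\end{align*}
to which I would like to apply stationary phase in the radial direction and integration by parts in the transverse directions. The obstruction, as announced in the introduction, is that $|\mathbf{c}(\lambda)|^{-2}$ is only piecewise smooth across the walls of the Weyl chamber and is \emph{not} a global differential symbol, so the direct calculus used in the complex case of \cite{Zha2020} is unavailable. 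My plan is to fix a barycentric decomposition of $\overline{\mathfrak{a}^{+}}$ (dualized to $\mathfrak{a}^{*}$) into subcones indexed by subsets of $\Sigma^{+}$, and in each such subcone to select a distinguished direction $\xi$ along which $|\mathbf{c}(\lambda)|^{-2}$ does behave like a smooth symbol of the expected degree. Integration by parts in $\xi$ against the phase $t|\lambda|$ then accounts for the $|\Sigma_{r}^{+}|$ gain coming from the root factors, while the remaining $\ell/2$ units of decay are produced by the stationary-phase gain in the $\ell$-dimensional spectral variable; together they deliver the sharp exponent $D/2$.

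The main obstacle, and where the argument will be technically most delicate, is to carry out this subcone-by-subcone integration by parts coherently across the partition. One must choose the distinguished directions so that the resulting estimates glue together through a smooth partition of unity, control the boundary contributions at the walls, and propagate the $|x^{+}|$-dependence of the amplitude (inherited from the Harish-Chandra series expansion of $\varphi_{\lambda}$) without spoiling the uniform $|t|$-decay. It is this delicate bookkeeping that effectively substitutes for the missing symbolic calculus on the Plancherel density, and, together with the Hadamard parametrix for the short-time regime, yields the announced pointwise bound.
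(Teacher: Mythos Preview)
Your outline has the right ingredients --- stationary phase, Hadamard parametrix, and the barycentric decomposition --- but they are assembled differently from the paper, and in places the assignment of tool to regime is inverted.

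First, a minor correction: the spectral multiplier is $(|\lambda|^2+|\rho|^2)^{-\sigma/2}\,e^{it\sqrt{|\lambda|^2+|\rho|^2}}$, not $|\lambda|^{-\sigma}e^{it|\lambda|}$, since the $L^2$ spectrum of $-\Delta$ is $[|\rho|^2,\infty)$.

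Second, the paper's organizing device --- which you do not mention --- is the subordination formula $r^{-\sigma}=\Gamma(\sigma)^{-1}\int_0^\infty s^{\sigma-1}e^{-sr}\,ds$, which rewrites the kernel as $\widetilde{\omega}_t^{\sigma}=C_{\sigma,d}\int_0^\infty s^{\sigma-1}p_{s-it}\,ds$ and splits the analysis into $s\in(0,1)$ and $s\in(1,\infty)$. The Hadamard parametrix is then applied not to $e^{it\sqrt{-\Delta}}$ directly, but to $\cos(v\sqrt{-\Delta})$ inside the representation $p_\tau=\int p_\tau^{\mathbb R}(v)\cos(v\sqrt{-\Delta})\,dv$ of the Poisson kernel, along the lines of \cite{CGM2001}.

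Third, the barycentric decomposition plays the opposite role from the one you describe. It is used only on the \emph{high-frequency} piece $I^+$ (where $|\lambda|\ge|\rho|$), which contains no critical point of the phase $\psi_t(\lambda)=\sqrt{|\lambda|^2+|\rho|^2}+\langle A(kx)/t,\lambda\rangle$; its sole purpose is to enable integrations by parts along a direction $w.\Lambda_j$ in which each factor $|\mathbf{c}_\alpha(\langle\alpha,\lambda\rangle)|^{-2}$ behaves like a one-dimensional symbol, yielding $I^+=O(|t|^{-N})$ for every $N$. The sharp $|t|^{-D/2}$ comes entirely from the \emph{low-frequency} piece $I^-$, by ordinary $\ell$-dimensional stationary phase (gain $|t|^{-\ell/2}$) combined with the vanishing $|\mathbf{c}(\lambda_0)|^{-2}\lesssim(|x|/|t|)^{D-\ell}$ of the Plancherel density at the critical point $\lambda_0$, which is small because $|\lambda_0|\asymp|A(kx)|/|t|\le|x|/|t|$. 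No barycentric decomposition is needed there.

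Fourth, and this is the real gap: that stationary-phase argument for $I^-$ works only when $|x|/|t|\le C_\Sigma$, since otherwise the critical point need not be small and the amplitude gain from $|\mathbf{c}(\lambda_0)|^{-2}$ is lost. The paper therefore introduces a further dichotomy in $|x|/|t|$. When $|t|\ge1$ and $|x|/|t|>C_\Sigma$ (and also for all $0<|t|<1$), the kernel is estimated instead via the Hadamard parametrix for the Poisson kernel; the resulting bound carries a large positive power of $|t|$, but since $|x|\gtrsim|t|$ one trades it against the polynomial $(1+|x^+|)^N$. Your proposal does not account for this regime, and the ``radial stationary phase plus transverse integration by parts'' scheme you sketch for the low-frequency region would not by itself produce $|t|^{-D/2}$, because integration by parts is unavailable through the critical point.
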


\begin{remark}
These kernel estimates are sharp in time and similar
results hold obviously in the easier case where
$\re\sigma>\frac{d+1}{2}$. 
The value of $N$ will be specified in 
\cref{section kernel estiamtes}. However, the polynomial
$(1+|x^{+}|)^{N}$ is not crucial for further computations
because of the exponential decay $e^{-\langle\rho,x^{+}\rangle}$.
\end{remark}

By interpolation arguments, 
we deduce our second main result.
\begin{theorem}[Dispersive property]\label{main thm2}
Assume that $d\ge3$, $2<q,\widetilde{q}<+\infty$ and 
$\sigma\ge(d+1)\max(\frac{1}{2}-\frac{1}{q},
\frac{1}{2}-\frac{1}{\widetilde{q}})$.
Then there exists a constant $C>0$ such that 
the following dispersive estimates hold:
\begin{align*}
    \OpNorme{W_{t}^{\sigma}}{\widetilde{q}'}{q}
    \le C
    \begin{cases}
        |t|^{-(d-1)\max(\frac{1}{2}-\frac{1}{q},
        \frac{1}{2}-\frac{1}{\widetilde{q}})} 
        &\textnormal{if \,} 0<|t|<1, \\
        |t|^{-\frac{D}{2}} 
        &\textnormal{if \,} |t|\ge1.
    \end{cases}
    \end{align*}
\end{theorem}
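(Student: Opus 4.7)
The plan is to apply Stein's complex interpolation theorem to the analytic family $\widetilde{W}_{t}^{\sigma}$ defined in \eqref{intro-wave operator}, across the vertical strip $0 \le \re\sigma \le (d+1)/2$. The normalization $e^{\sigma^2}/\Gamma(\tfrac{d+1}{2}-\sigma)$ is tailored both to cancel the pole of $\Gamma$ at the right edge of the strip and to guarantee admissible growth in $|\im\sigma|$: combining $|e^{\sigma^2}| = e^{(\re\sigma)^2-(\im\sigma)^2}$ with Stirling's bound $|\Gamma(a+ib)|^{-1} = O(|b|^{\frac{1}{2}-a} e^{\pi|b|/2})$ yields overall Gaussian decay on each vertical line, which is more than enough to feed into Stein's theorem.

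The two endpoint estimates go as follows. On $\re\sigma = 0$, the spectral theorem, applied to the unitary operators $(-\Delta)^{-i\tau/2}$ and $e^{it\sqrt{-\Delta}}$, gives $\OpNorme{\widetilde{W}_{t}^{\sigma}}{2}{2} \le C$ uniformly in $t\in\mathbb{R}^{*}$ and $\im\sigma$. On $\re\sigma = (d+1)/2$, \cref{main thm1} bounds the $K$-bi-invariant convolution kernel by $|\widetilde{\omega}_{t}^{\sigma}(x)| \le C(1+|x^{+}|)^{N} e^{-\langle\rho,x^{+}\rangle} T(t)$, where $T(t) = |t|^{-(d-1)/2}$ for $0<|t|<1$ and $T(t) = |t|^{-D/2}$ for $|t|\ge1$; taking $\sup_{x}$ and absorbing the polynomial into the exponential yields $\OpNorme{\widetilde{W}_{t}^{\sigma}}{1}{\infty} \le C T(t)$.

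For the short-time regime, Stein interpolation between these two endpoints at $\sigma_{\theta} = \theta(d+1)/2$ gives $\|\widetilde{W}_{t}^{\sigma_{\theta}}\|_{L^{p_{\theta}'}\to L^{p_{\theta}}} \lesssim T(t)^{\theta}$ with $1/p_{\theta} = (1-\theta)/2$, and thus, after undoing the normalization, the symmetric dispersive bound $\OpNorme{W_{t}^{\sigma}}{q'}{q} \lesssim |t|^{-(d-1)(\frac{1}{2}-\frac{1}{q})}$ at $\sigma = (d+1)(\tfrac{1}{2}-\tfrac{1}{q})$ for $q>2$. The asymmetric $(q,\widetilde{q})$ version is then obtained through the semigroup factorization $W_{t}^{\sigma} = W_{t/2}^{\sigma_{1}}\circ W_{t/2}^{\sigma_{2}}$, combined with a $TT^{*}$/duality argument that reduces $L^{\widetilde{q}'}\to L^{q}$ to a sandwich of the symmetric bounds through the intermediate space $L^{2}$.

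The main obstacle lies in the long-time regime $|t|\ge1$, where naive Stein interpolation produces only $|t|^{-\theta D/2}$, strictly weaker than the claimed sharp rate $|t|^{-D/2}$. Sharpness is recovered by direct convolution estimates: the exponential factor $e^{-\langle\rho,x^{+}\rangle}$ exactly compensates half of the exponential Haar volume growth on $\mathbb{X}$, so $\|\widetilde{\omega}_{t}^{\sigma}\|_{L^{r}(\mathbb{X})} \lesssim |t|^{-D/2}$ for every $r>2$. Young's convolution inequality then delivers $\OpNorme{W_{t}^{\sigma}}{\widetilde{q}'}{q} \lesssim |t|^{-D/2}$ whenever $1/q+1/\widetilde{q}<1/2$. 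The remaining borderline asymmetric exponents with $1/q+1/\widetilde{q}\ge 1/2$ are the genuinely delicate case: they are closed by invoking Herz/Kunze--Stein type improvements of Young's inequality on the noncompact semisimple group $G$, which exploit precisely the exponential decay of the inverse Harish-Chandra $\mathbf{c}$-function to give the borderline $L^{r}$-integrability of the kernel that Euclidean convolution theory refuses.
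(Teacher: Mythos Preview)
Your short-time argument is essentially the paper's: Stein interpolation between the unitary $L^2\to L^2$ bound at $\re\sigma=0$ and the $L^1\to L^\infty$ bound at $\re\sigma=(d+1)/2$ coming from \cref{main thm1}. Fine.

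The long-time argument, however, has a real gap. You correctly diagnose that naive interpolation loses the exponent, and you correctly reach for Kunze--Stein. But the kernel bound in \cref{main thm1} is stated only for $\re\sigma=(d+1)/2$. Your Young/Kunze--Stein step therefore yields $\|W_t^{\sigma}\|_{L^{\widetilde q'}\to L^q}\lesssim|t|^{-D/2}$ only at $\sigma=(d+1)/2$, whereas the theorem demands it at the strictly smaller value $\sigma_0=(d+1)\max(\tfrac12-\tfrac1q,\tfrac12-\tfrac1{\widetilde q})$. You cannot descend from $\sigma=(d+1)/2$ to $\sigma_0$ by composing with $(-\Delta)^{(\,(d+1)/2-\sigma_0)/2}$: that is a \emph{positive} power of the Laplacian, hence unbounded on $L^q$.

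The paper closes this gap by exploiting a finer structure hidden behind \cref{main thm1}. The kernel is split as
\[
\omega_t^{\sigma}=\mathds{1}_{B(0,C_\Sigma|t|)}\,\omega_t^{\sigma,0}+\mathds{1}_{\mathbb{X}\setminus B(0,C_\Sigma|t|)}\,\omega_t^{\sigma,0}+\omega_t^{\sigma,\infty},
\]
and the point is that the three pieces behave differently in $\sigma$. The first and third pieces (Theorems~\ref{theorem subsection 1} and \ref{theorem subsection 3}) carry pointwise bounds $\lesssim|t|^{-D/2}(1+|x^+|)^N e^{-\langle\rho,x^+\rangle}$ \emph{uniformly over the whole strip} $0\le\re\sigma\le(d+1)/2$; feeding these directly into the Kunze--Stein criterion (\cref{Kunze-Stein}) gives the $L^{q'}\to L^q$ bound with the sharp $|t|^{-D/2}$ decay at the correct $\sigma_0$, with no interpolation needed. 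The middle piece (Theorem~\ref{theorem subsection 2}) is the only one whose pointwise bound is restricted to $\re\sigma=(d+1)/2$, but there the decay is $|t|^{-N}$ for arbitrary $N$; Stein interpolation then produces $|t|^{-\theta N}\le|t|^{-D/2}$ for $N$ large. So the decomposition separates exactly the piece on which interpolation is lossy (and makes it harmlessly fast-decaying) from the pieces on which Kunze--Stein applies directly at every $\sigma$. Your proposal collapses this distinction by invoking only the aggregated \cref{main thm1}, and that is where the argument breaks.
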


\begin{remark}
At the endpoint $q=\widetilde{q}=2$,
$t\mapsto{e^{it\sqrt{-\Delta}}}$ is a one-parameter 
group of unitary operators on $L^2(\mathbb{X})$. 
\end{remark}

\begin{remark}
\cref{main thm1} and \cref{main thm2} generalize 
earlier results obtained for real hyperbolic spaces 
\cite{APV2012,AnPi2014} (which extend straightforwardly 
to all noncompact symmetric spaces of rank one), 
or for noncompact symmetric spaces $G/K$ with $G$ 
complex \cite{Zha2021}. Notice that $D=3$ in rank one 
and that $D=d$ if $G$ is complex.
\end{remark}

\begin{remark}
For simplicity, we omit the $2$-dimensional case 
where the small time bounds in \cref{main thm1} and 
\cref{main thm2} involve an additional logarithmic factor,
see \cite[Theorem 3.2 and 4.2]{AnPi2014}. 
Notice that $d\ge4$ in higher rank, 
see \eqref{dimensions}.
\end{remark}

Let us sketch the proofs of our main results. 
We prove the dispersive properties of 
$W_{t}^{\sigma}$ by using interpolation 
arguments based on pointwise estimates of 
$\widetilde{\omega}_{t}^{\sigma}$, which are sharp in time.
By the way, let us point out that the kernel analysis 
carried out on hyperbolic spaces \cite{AnPi2014} 
can not be extended straightforwardly in higher rank, 
since the Plancherel density is not a differential symbol 
in general.
Consider the Poisson operator 
$\mathcal{P}_{\tau}= e^{-\tau \sqrt{-\Delta}}$, 
for all $\tau\in\mathbb{C}$ with $\re\tau\ge0$.  
Along the lines of \cite{Sch1988, GiMe1990,CGM2002}, 
we can write formally our wave operator 
\eqref{intro-wave operator} as
\begin{align*}
    \widetilde{W}_{t}^{\sigma} 
    = \frac{e^{\sigma^2}}
        {\Gamma(\frac{d+1}{2}-\sigma)}
        \frac{1}{\Gamma(\sigma)}
        \int_{0}^{+\infty} ds\,s^{\sigma-1}
        \mathcal{P}_{s-it}.
\end{align*}
Our analysis is focused on kernel estimates of the Poisson 
operator $\mathcal{P}_{s-it}$ where $s\in \mathbb{R}^{+}$ and 
$t\in \mathbb{R}^{*}$. We adopt different methods depending 
whether $s$, $|t|$ and $\frac{|x|}{|t|}$ ($x\in\mathbb{X}$) are 
small or large. Specifically, 
\begin{itemize}[leftmargin=*]
    \item 
    If $s$ is bounded from above and $\frac{|x|}{|t|}$ is 
    sufficiently small with $|t|$ large, we develop an effective stationary phase method based on barycentric decompositions
    of Weyl chambers described in \cref{subsection decomposition}. 
    In each subdivision, the Plancherel density becomes a differential symbol for a well chosen directional derivative, 
    see \cref{subsection 1}.
    \item
    If $s$ is bounded from above but $\frac{|x|}{|t|}$ is large 
    (with $|t|$ small or large), we estimate the kernel along the
    lines of \cite{CGM2001}, where Cowling, Guilini and Meda have
    studied the Poisson operator $\mathcal{P}_{\tau}$ for  $\tau\in\mathbb{C}$ with $\re\tau\ge0$.
    Unfortunately, their estimates are not sharp when $\tau$ is large
    and nearly imaginary, which happens in our context when $s$
    is small and $|t|$ is large. To deal with this case, we resume
    and improve slightly their method by writing down more explicitly
    the Hadamard parametrix on noncompact symmetric spaces along
    the lines of \cite{Ber1977}, see \cref{subsection 2}.
    \item
    If $s$ is large, the kernel is estimated by using the standard
    stationary phase method, which is similar to the rank one 
    analysis, see \cref{subsection 3}.
\end{itemize}

This paper is organized as follows. We recall spherical Fourier 
analysis on noncompact symmetric spaces and introduce the 
barycentric decomposition of Weyl chambers in 
\cref{section preliminaires}. 
Next, we derive pointwise wave kernel estimates in 
\cref{section kernel estiamtes}. 
By using interpolation arguments, we prove in 
\cref{section dispersive estimates} the dispersive property 
for the wave operator. 
As consequences, we establish the Strichartz inequality for 
a large family of admissible pairs and obtain well-posedness 
results for the associated semi-linear wave equation in 
\cref{section applications}. 
We give further results about the Klein-Gordon equation
in \cref{section Klein-Gordon}.
Finally, we collect in the appendices some useful results:
in \cref{Appendix A}, we study by the stationary phase method
an oscillatory integral occurring in the wave kernel analysis; 
next we describe in \cref{Appendix B} the Hadamard parametrix on
noncompact symmetric spaces and consider its application 
to the Poisson operator in \cref{Appendix C}.

\section{Preliminaries}\label{section preliminaires}
In this section, we first review briefly spherical 
Fourier analysis on noncompact symmetric spaces. 
Next we introduce a barycentric decomposition for 
Weyl chambers, which will be crucial for the 
forthcoming kernel estimates.

\subsection{Notations}\label{subsection notations}
We adopt the standard notation and refer to 
\cite{Hel1978, Hel2000} for more details. 
Let $G$ be a semisimple Lie group, connected, noncompact,
with finite center, and $K$ be a maximal compact subgroup
of $G$. The homogeneous space $\mathbb{X}=G/K$ is 
a Riemannian symmetric space of noncompact type. 
Let $\mathfrak{g} = \mathfrak{k} \oplus \mathfrak{p}$ be
the Cartan decomposition of the Lie algebra of $G$. 
There is a natural identification between $\mathfrak{p}$
and the tangent space of $\mathbb{X}$ at the origin. 
The Killing form of $\mathfrak{g}$ induces a $K$-invariant
inner product on $\mathfrak{p}$, hence a $G$-invariant
Riemannian metric on $\mathbb{X}$.

Fix a maximal abelian subspace $\mathfrak{a}$ in
$\mathfrak{p}$. The rank of $\mathbb{X}$ is the dimension
$\ell$ of $\mathfrak{a}$. Let $\Sigma\subset\mathfrak{a}$
be the root system of $( \mathfrak{g},\mathfrak{a})$ and
denote by $W$ the Weyl group associated to $\Sigma$. 
Once a positive Weyl chamber 
$\mathfrak{a}^{+}\subset\mathfrak{a}$ has been selected,
$\Sigma^{+}$ (resp. $\Sigma_{r}^{+}$ or $\Sigma_{s}^{+}$)
denotes the corresponding set of positive roots 
(resp. positive reduced roots or simple roots). 
Let $d$ be the dimension of $\mathbb{X}$ and $D$ be the
dimension at infinity of $\mathbb{X}$: 
\begin{align}\label{dimensions}
\textstyle
    d= \ell + \sum_{\alpha \in \Sigma^{+}} m_{\alpha}
    \quad \text{and} \quad
    D= \ell + 2|\Sigma_{r}^{+}|,
\end{align}
where $m_{\alpha}$ is the dimension of the positive 
root subspace $\mathfrak{g}_{\alpha}$. 
Notice that one cannot compare $d$ and $D$ without 
specifying the geometric structure of $\mathbb{X}$. 
For example, when $G$ is complex, we have $d=D$; 
but when $\mathbb{X}$ is a normal real form, 
we have $d=\ell+|\Sigma_{r}^{+}|$ which is strictly 
smaller than $D$. Since we focus on the higher rank
analysis, we may assume that $d\ge3$.

Let $\mathfrak{n}$ be the nilpotent Lie subalgebra 
of $\mathfrak{g}$ associated to $\Sigma^{+}$ 
and let $N = \exp \mathfrak{n}$ be the corresponding 
Lie subgroup of $G$. We have the decompositions 
\begin{align*}
    \begin{cases}
        G=N\left(\exp\mathfrak{a}\right) K 
        \quad& \textnormal{(Iwasawa)}, \\
        G= K(\exp\overline{\mathfrak{a}^{+}})K
        \quad& \textnormal{(Cartan)}.
    \end{cases}
\end{align*}
In the Cartan decomposition, the Haar measure 
on $G$ writes
\begin{align*}
    \int_{G} dx \,f(x) 
    =   
    \const \int_{K}dk_{1}\
    \int_{\mathfrak{a}^{+}}dx^{+}\ \delta(x^{+})  
    \int_{K}dk_{2}\ f(k_{1}(\exp x^{+})k_{2}),
\end{align*}
with 
\begin{align*}
    \delta(x^{+})
    = \prod_{\alpha\in\Sigma^{+}} 
        \left(\sinh \alpha(x^{+})\right)^{m_{\alpha}}
    \asymp \Big\lbrace 
        \prod_{\alpha\in\Sigma^{+}} 
        \frac{\langle\alpha,x^{+}\rangle}
        {1+\langle\alpha,x^{+}\rangle}
    \Big\rbrace^{m_{\alpha}}
        e^{\langle2\rho,x^{+}\rangle}
    \quad \forall\,x^{+}\!\in\frakACL. 
\end{align*}
Here $\rho\in\mathfrak{a}^{+}$ denotes the half sum 
of all positive roots $\alpha \in \Sigma^{+}$ 
counted with their multiplicities $m_{\alpha}$:
\begin{align*}
\textstyle
    \rho 
    = \frac{1}{2} \sum_{\alpha\in\Sigma^{+}} 
        m_{\alpha} \alpha.
\end{align*}

\subsection{Spherical Fourier analysis on $\mathbb{X}$}
\label{subsection spherical fourier analysis}

Let $\mathcal{S}(K \backslash{G}/K)$ be the Schwartz space of $K$-bi-invariant functions on $G$. 
The spherical Fourier transform $\mathcal{H}$ is defined by
\begin{align*}
    \mathcal{H} f(\lambda) 
    = \int_{G}dx\ \varphi_{-\lambda} (x) f(x) 
    \quad\forall\,\lambda\in\mathfrak{a},\ 
    \forall\,f\in\mathcal{S}(K\backslash{G/K}),
\end{align*}
where $\varphi_{\lambda}\in\mathcal{C}^{\infty}
(K \backslash{G/K})$ denotes the spherical function 
of index $\lambda \in \mathfrak{a}_{\mathbb{C}}$, 
which is a smooth $K$-bi-invariant eigenfunction 
for all invariant differential operators on $\mathbb{X}$,
in particular for the Laplace-Beltrami operator:
\begin{equation*}
    - \Delta\varphi_{\lambda}(x) 
    = \left(|\lambda|^2+|\rho|^2\right)         
        \varphi_{\lambda}(x).
\end{equation*}
In the noncompact case, spherical functions have 
the integral representation
\begin{align}\label{integral expression of phi lambda}
    \varphi_{\lambda}(x) 
    = \int_{K}dk\ 
        e^{\langle{i\lambda+\rho,A(kx)}\rangle}
    \quad\forall\,
        \lambda\in{\mathfrak{a}_{\mathbb{C}}},
\end{align}
where $A(kx)$ denotes the $\mathfrak{a}$-component 
in the Iwasawa decomposition of $kx$. 
It satisfies the basic estimate
\begin{align*}
    |\varphi_{\lambda}(x)| 
    \le \varphi_{0}(x)
    \quad\forall\,\lambda\in\mathfrak{a},\
    \forall\,x\in G,
\end{align*}
where
\begin{align*}
    \varphi_{0} (\exp x^{+}) 
    \asymp \Big\lbrace 
        \prod_{\alpha\in\Sigma_{r}^{+}} 
        1+\langle\alpha,x^{+}\rangle
        \Big\rbrace 
        e^{-\langle\rho, x^{+}\rangle} 
        \quad\forall\,x^{+}\in\frakACL.
\end{align*}

Denote by $\mathcal{S}(\mathfrak{a})^{W}$ the subspace 
of $W$-invariant functions in the Schwartz space
$\mathcal{S}(\mathfrak{a})$. Then $\mathcal{H}$ is an
isomorphism between $\mathcal{S}(K\backslash{G/K})$ 
and $\mathcal{S}(\mathfrak{a})^{W}$. 
The inverse spherical Fourier transform is given by
\begin{align*}
    f(x) 
    = C_0 \int_{\mathfrak{a}}d\lambda\
        |\mathbf{c(\lambda)}|^{-2} 
        \varphi_{\lambda}(x)\,
        \mathcal{H}f(\lambda) 
    \quad \forall\,x\in{G},\ 
        \forall\,f\in\mathcal{S}(\mathfrak{a})^{W},
\end{align*}
where $C_0 > 0$ is a constant depending only 
on the geometric structure of $\mathbb{X}$, 
and which has been computed explicitly for instance in
\cite[Theorem 2.2.2]{AnJi1999}. 
By using the Gindikin \& Karpelevič formula of 
the Harish-Chandra $\mathbf{c}$-function 
(see \cite{Hel2000} or \cite{GaVa1988}), 
we can write the Plancherel density as
\begin{align}\label{Plancherel density}
    |\mathbf{c}(\lambda)|^{-2}
    = \prod_{\alpha\in\Sigma_{r}^{+}}
        |\mathbf{c}_{\alpha}
        (\langle\alpha,\lambda\rangle)|^{-2},
\end{align}
with
\begin{align*}
\textstyle
    \mathbf{c}_{\alpha}(v) =
    \frac{\Gamma(\frac{\langle{\alpha,\rho}\rangle}
        {\langle{\alpha,\alpha}\rangle}
        +\frac{1}{2} m_{\alpha})}
        {\Gamma(\frac{\langle{\alpha,\rho}\rangle}
        {\langle{\alpha,\alpha}\rangle})}
    \frac{\Gamma(\frac{1}{2}
        \frac{\langle{\alpha,\rho}\rangle}
        {\langle{\alpha,\alpha}\rangle} 
        +\frac{1}{4} m _{\alpha} 
        + \frac{1}{2} m_{2\alpha})}
        {\Gamma(\frac{1}{2}
        \frac{\langle{\alpha,\rho}\rangle}
        {\langle {\alpha,\alpha}\rangle} 
        + \frac{1}{4} m_{\alpha})}
    \frac{\Gamma(iv)}
        {\Gamma(iv+ \frac{1}{2}m_{\alpha})}
    \frac{\Gamma(\frac{i}{2}v 
        + \frac{1}{4} m_{\alpha})}
        {\Gamma(\frac{i}{2}v 
        +\frac{1}{4} m_{\alpha} 
        + \frac{1}{2} m_{2\alpha})}.
\end{align*}
Since $|\mathbf{c}_{\alpha}|^{-2}$ is a homogeneous 
symbol on $\mathbb{R}$ of order $m_{\alpha}+m_{2\alpha}$ 
for every $\alpha\in\Sigma_{r}^{+}$, then
$|\mathbf{c}(\lambda)|^{-2}$ is a product of 
one-dimensional symbols, but not a symbol on 
$\mathfrak{a}$ in general. 
The Plancherel density satisfies
\begin{align*}
    |\mathbf{c}(\lambda)|^{-2} 
    \asymp \prod_{\alpha\in\Sigma_{r}^{+}}
        \langle\alpha,\lambda\rangle^2 
        (1+|\langle\alpha,\lambda\rangle|
        )^{m_{\alpha}+m_{2\alpha}-2}
    \lesssim
    \begin{cases}
        |\lambda|^{D-\ell} 
        & \textnormal{if \,} |\lambda|\le1,\\
        |\lambda|^{d-\ell} 
        & \textnormal{if \,} |\lambda|\ge1.
    \end{cases}
\end{align*}
Occasionally we split up
\begin{align*}
  \mathbf{c}(\lambda)=\boldsymbol{\pi}(i\lambda)^{-1}\,\mathbf{b}(\lambda)
\quad\text{and}\quad
|\mathbf{c}(\lambda)|^{-2}=\boldsymbol{\pi}(\lambda)^2\,|\mathbf{b}(\lambda)|^{-2},
\end{align*}
where $\boldsymbol{\pi}(\lambda)=\prod_{\alpha\in\Sigma_r^+}\langle\alpha,\lambda\rangle$.

\subsection{Barycentric decomposition of the Weyl chamber}
\label{subsection decomposition}
Let $\Sigma_{s}^{+}=\lbrace \alpha_{1},\dots,
\alpha_{\ell} \rbrace$ be the set of positive 
simple roots, and let 
$\lbrace \Lambda_{1},\dots, \Lambda_{\ell} \rbrace$ 
be the dual basis of $\mathfrak{a}$, 
which is defined by
\begin{align}\label{definition dual basis}
    \langle\alpha_{j},\Lambda_{k}\rangle 
    = \delta_{jk}
    \quad\forall\,1\le{j,k}\le\ell.
\end{align}
Notice that $\overline{\mathfrak{a}^{+}} 
=\mathbb{R}^{+}\Lambda_{1}+\cdots+\mathbb{R}^{+} 
\Lambda_{\ell}$ and recall that
\begin{align}\label{property dual basis}
    \begin{cases}
        \langle \alpha_{j},\alpha_{k} \rangle \le 0
        \quad& \forall\, 1 \le j\neq k \le \ell\\[5pt]
        \langle \Lambda_{j},\Lambda_{k} \rangle \ge 0
        \quad& \forall\, 1 \le j,k \le \ell
    \end{cases}
\end{align}
(see \cite[Chap.VII, Lemmas 2.18 and 2.25]{Hel1978},
see also \cite[p.590]{Kor1993}).
Let $\mathfrak{B}$ be the convex hull of 
$W.\Lambda_{1} \sqcup \cdots \sqcup W.\Lambda_{\ell}$, 
and let $\mathfrak{S}$ be its polyhedral boundary. 
Notice that 
$\mathfrak{B} \cap \overline{\mathfrak{a}^{+}}$ 
is the $\ell$-simplex with vertices 
$0,\Lambda_{1},\dots,\Lambda_{\ell}$, and 
$\mathfrak{S} \cap \overline{\mathfrak{a}^{+}}$ 
is the $(\ell-1)$-simplex with vertices 
$\Lambda_{1},\dots,\Lambda_{\ell}$. 
The following tiling is obtained by regrouping the
barycentric subdivisions of the simplices 
$\mathfrak{S} \cap \overline{w.\mathfrak{a}^{+}}$:
\begin{align}\label{tiling S gothic}
    \mathfrak{S}
    = \bigcup_{w \in W} \bigcup_{1\le j\le \ell} w.     
        \mathfrak{S}_{j}
\end{align}
where
\begin{align*}
    \mathfrak{S}_{j} 
    = \lbrace
        \lambda\in\mathfrak{S}\cap\frakACL
        \ |\ \langle \alpha_{j},\lambda\rangle
        = \max_{1\le k \le \ell} 
        \langle\alpha_{k},\lambda \rangle
    \rbrace
\end{align*}
(see Figure 1).

\begin{figure}
    \vspace{-0.75cm}
    \begin{subfigure}[t]{0.5\textwidth}
    \centering
    \begin{tikzpicture}[line cap=round,line join=round,>=triangle 45,x=1.0cm,y=1.0cm,scale=0.9]
    \clip(-2.5,-4.75) rectangle (8,5);
    \draw [line width=1.pt,color=gray] (0.,0.)-- (-0.001924075134753982,4.1036328374460025);
    \draw [line width=1.pt,color=gray] (0.,0.)-- (3.506570339054844,2.043790363273668);
    \draw [line width=1.pt,color=red] (0.,3.)-- (2.5980762113533165,1.5);
    \draw [->,line width=1.pt] (0.,0.) -- (1.522051206786732,2.5771087087401936);
    \draw [->,line width=1.pt] (0.,0.) -- (3.022051206786732,-0.0209675026131233);
    \draw [->,line width=1.pt] (0.,0.) -- (-1.4779487932132676,2.577108708740194);
    \draw [->,line width=1.pt,color=blue] (0.,0.) -- (0.,3.);
    \draw [->,line width=1.pt,color=blue] (0.,0.) -- (2.6282805793082487,1.5060448360454495);
    \begin{scriptsize}
    \draw (0.,0.) [fill] circle (1.5pt);
    \draw (-0.3,-0.3) node {\large $O$};
    \draw (1,4) node {\large $\overline{\mathfrak{a}^{+}}$};
    \draw (-2,2.6) node {\large $\alpha_{2}$};
    \draw (3.3,-0.4) node {\large $\alpha_{1}$};
    \draw (-0.4,3.0) node {\large $\Lambda_{2}$};
    \draw (2.85,1.2) node {\large $\Lambda_{1}$};
    \draw (0.9,2.9) node {$\mathfrak{S}_{1}$};
    \draw (2.2,2.1) node {$\mathfrak{S}_{2}$};
    \draw (1.33,2.23) [red,fill=red] circle (1.5pt);
    \draw (1.4,1.9) node {$B$};
    \end{scriptsize}
    \end{tikzpicture}
    \end{subfigure}
    ~ 
    \begin{subfigure}[t]{0.5\textwidth}
    \centering
    \begin{tikzpicture}[line cap=round,line join=round,>=triangle 45,x=1.0cm,y=1.0cm, scale=0.7]
    \clip(-2.5,-8) rectangle (8,5);
    \draw [line width=1.2pt,gray] (-0.7284086688607592,0.35576067574813774)-- (4.985446010438759,3.3119338855607467);
    \draw [line width=1.2pt,gray] (-0.7284086688607592,0.35576067574813774)-- (4.139419219106337,-2.6566112861679647);
    \draw [line width=1.2pt,dash pattern=on 4pt off 4pt,gray] (-0.7284086688607592,0.35576067574813774)-- (6.005313923277843,-1.1499882331102318);
    \draw [->,line width=1.pt,dash pattern=on 4pt off 4pt,color=blue] (-0.7284086688607592,0.35576067574813774) -- (5.031706380167742,-0.934680821414303);
    \draw [line width=1.2pt,color=red] (2.9507617363248198,0.15475396250089046)-- (3.6404547647133185,-0.20881041335112677);
    \draw [line width=1.2pt,color=red] (3.6404547647133185,-0.20881041335112677)-- (3.973134704848195,0.6501522010641021);
    \draw [line width=1.2pt,color=red] (3.6404547647133185,-0.20881041335112677)-- (4.006958916900773,-1.437959161361985);
    \fill[line width=1.pt,color=red,fill=red,fill opacity=0.10000000149011612] (2.914584356650141,2.2349532937949) -- (2.987171397456459,-1.9388015525683633) -- (5.031706380167742,-0.934680821414303) -- cycle;
    \draw [line width=1.2pt,color=red] (2.987171397456459,-1.9388015525683633)-- (5.031706380167742,-0.934680821414303);
    \draw [line width=1.2pt,color=red] (5.031706380167742,-0.934680821414303)-- (2.914584356650141,2.2749532937949);
    \draw [line width=1.2pt,color=red] (2.914584356650141,2.2349532937949)-- (2.987171397456459,-1.9555);
    \draw [red,fill=red] (2.925584356650141,2.2449532937949) circle (1.5pt);
    \draw [red,fill=red] (5.031706380167742,-0.934680821414303) circle (1.5pt);
    \draw [red,fill=red] (2.987171397456459,-1.9388015525683633) circle (1.5pt);
    \begin{scriptsize}
    \draw [fill] (-0.7284086688607592,0.35576067574813774) circle (2pt);
    \draw (-1.2,0.4) node {\large $O$};
    \draw (3,-2.5) node {\large $\Lambda_2$};
    \draw (3,2.7) node {\large $\Lambda_1$};
    \draw (5.3,-0.6) node {\large $\Lambda_3$};
    \draw [red,fill=red] (3.6404547647133185,-0.20881041335112677) circle (1.5pt);
    \draw (3.55,0.1) node {$B$};
    \draw (3.4,0.7) node {$\mathfrak{S}_{1}$};
    \draw (3.4,-0.8) node {$\mathfrak{S}_{2}$};
    \draw (4.2,-0.4) node {$\mathfrak{S}_{3}$};
    \draw (4.7,2.5) node {\large $\overline{\mathfrak{a}^{+}}$};
    \draw [->,line width=1.pt,color=blue] (-0.7284086688607592,0.35576067574813774) -- (2.987171397456459,-1.938801552568363);
    \draw [->,line width=1.pt,color=blue] (-0.7284086688607592,0.35576067574813774) -- (2.914584356650141,2.2349532937949);
    \end{scriptsize}
    \end{tikzpicture}
    \end{subfigure}
    \vspace{-3.75cm}
    \caption{Examples of barycentric subdivisions in $A_2$ and in $A_3$.}
    \end{figure}
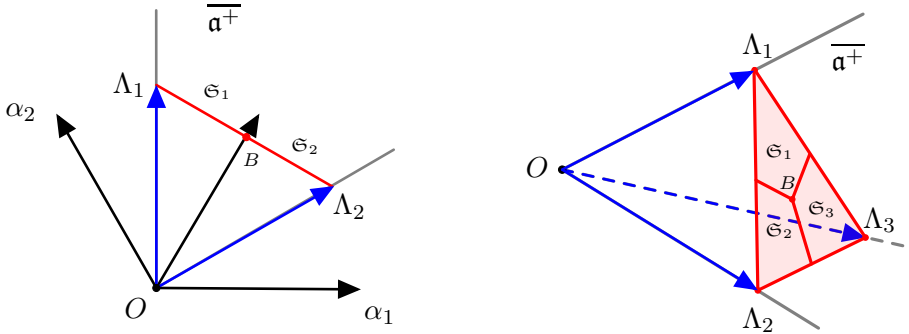

\begin{remark}
$\mathfrak{S}_{j}$ is the convex hull of the points 
\begin{align*}
    \frac{\Lambda_{k_{1}}+\cdots+\Lambda_{k_{r}}}{r}
\end{align*}
where 
$\lbrace \Lambda_{k_{1}},\dots,\Lambda_{k_{r}}\rbrace$ 
runs through all subsets of 
$\lbrace \Lambda_{1},\dots,\Lambda_{\ell} \rbrace$
containing $\Lambda_{j}$.
\end{remark}

\begin{lemma}\label{properties on S gothic}
Let $w\in W$ and $1\le j\le\ell$. Then 
\begin{enumerate}[leftmargin=*, label=(\roman*)]
    \item a root $\alpha \in \Sigma$ is orthogonal 
    to some vectors in the tile $w.\mathfrak{S}_{j}$ 
    if and only if $\alpha$ is orthogonal to 
    its vertex $w.\Lambda_{j}$.
    \vspace{5pt}
    \item $\langle w.\Lambda_{j},\lambda\rangle
    \ge\frac{1}{\ell}|\Lambda_{j}|^2$ for every 
    $\lambda \in w.\mathfrak{S}_{j}$.
\end{enumerate}
\end{lemma}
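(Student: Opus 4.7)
Both assertions are $W$-equivariant: since the Weyl group acts by isometries on $\mathfrak{a}$ and permutes $\Sigma$, the substitutions $\lambda\mapsto w^{-1}.\lambda$ and $\alpha\mapsto w^{-1}.\alpha$ convert the statements about $(w.\mathfrak{S}_j, w.\Lambda_j, \alpha)$ into the identical statements about $(\mathfrak{S}_j, \Lambda_j, w^{-1}.\alpha)$. It therefore suffices to treat the case $w=e$. The key input is then the preceding Remark, which describes $\mathfrak{S}_j$ as the convex hull of the barycenters
\[
b_I \;=\; \frac{1}{|I|}\sum_{k\in I}\Lambda_k,
\qquad I\subset\{1,\dots,\ell\},\ I\ni j.
\]

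For (i), the $(\Leftarrow)$ direction is free because $\Lambda_j = b_{\{j\}}$ is itself a vertex of $\mathfrak{S}_j$. For the converse, I would, without loss of generality, take $\alpha\in\Sigma^{+}$ and expand $\alpha=\sum_{k=1}^{\ell}n_k\alpha_k$ with integers $n_k\ge 0$. The duality relation \eqref{definition dual basis} yields $\langle\alpha,\Lambda_k\rangle = n_k\ge 0$. Writing $\lambda\in\mathfrak{S}_j$ as a convex combination $\lambda=\sum_I c_I b_I$ gives
\[
\langle\alpha,\lambda\rangle \;=\; \sum_I \frac{c_I}{|I|}\sum_{k\in I} n_k,
\]
a sum with only nonnegative contributions. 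If it vanishes, then for every $I$ with $c_I>0$ one must have $n_k=0$ for all $k\in I$; since every indexing subset contains $j$ and at least one $c_I$ is positive, this forces $n_j=0$, i.e.\ $\langle\alpha,\Lambda_j\rangle=0$.

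For (ii), convexity reduces the inequality to the vertices $b_I$. Using \eqref{property dual basis} (which asserts $\langle\Lambda_j,\Lambda_k\rangle\ge 0$ for all $k$) and keeping only the term $k=j\in I$, I obtain
\[
\langle\Lambda_j,b_I\rangle \;=\; \frac{1}{|I|}\sum_{k\in I}\langle\Lambda_j,\Lambda_k\rangle \;\ge\; \frac{|\Lambda_j|^2}{|I|} \;\ge\; \frac{|\Lambda_j|^2}{\ell},
\]
since $|I|\le\ell$. Linearity of the inner product propagates this bound to every convex combination.

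The argument is short once the vertex description from the Remark is available; the only subtle point is the forward direction of (i), where the decisive fact is that the index $j$ appears in \emph{every} subset $I$ parameterising a vertex of $\mathfrak{S}_j$. It is precisely this built-in preference for the direction $\Lambda_j$ that will make the tiles $\mathfrak{S}_j$ suitable for defining the directional derivatives used later to bypass the failure of $|\mathbf{c}|^{-2}$ to be a genuine symbol on $\mathfrak{a}$.
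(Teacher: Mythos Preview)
Your proof is correct and follows essentially the same line as the paper's. The only cosmetic difference is that you parametrize $\lambda\in\mathfrak{S}_j$ via the vertex description of the preceding Remark (as a convex combination of the barycenters $b_I$), whereas the paper uses directly the barycentric-coordinate expansion $\lambda=\sum_k\langle\alpha_k,\lambda\rangle\,\Lambda_k$ together with the observation that $\langle\alpha_j,\lambda\rangle\ge\tfrac{1}{\ell}$ on $\mathfrak{S}_j$; both routes amount to the same nonnegativity argument based on \eqref{definition dual basis} and \eqref{property dual basis}.
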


\begin{proof}
{\it(i)} Let us show that 
$\langle\alpha,w.\Lambda_{j}\rangle=0$ if there exists
$\lambda\in w.\mathfrak{S}_{j}$ such that
$\langle\alpha,\lambda\rangle=0$. By symmetry, we may
assume that $w=\id$ and that $\alpha$ is a positive root.
On the one hand, since $\alpha$ is spanned by the 
positive simple roots $\alpha_{1},\dots,\alpha_{\ell}$, 
we have
\begin{align*}
    \alpha 
    = \sum_{1\le k\le\ell} 
        \langle\alpha,\Lambda_{k}\rangle \alpha_{k}
\end{align*}
with $\langle\alpha,\Lambda_{k}\rangle \in\mathbb{N}$. 
On the other hand, since $\langle\alpha_{1},\lambda\rangle,
\dots,\langle\alpha_{\ell},\lambda\rangle$ are the 
barycentric coordinates of 
$\lambda\in\mathfrak{S}\cap\overline{\mathfrak{a}^{+}}$,
we have
\begin{align}\label{convex combination lambda}
    \lambda 
    =  \sum_{1\le k\le\ell}         
        \langle\alpha_{k},\lambda\rangle \Lambda_{k}
\end{align}
which is a convex combination. 
In particular, $\langle\alpha_{j},\lambda\rangle>0$ 
for all $\lambda\in\mathfrak{S}_{j}$. 
Hence the inner product
\begin{align*}
    \langle\alpha,\lambda\rangle
    = \sum_{1\le k\le\ell} 
        \underbrace{\langle\alpha,\Lambda_{k}\rangle
        }_{\ge0}
        \underbrace{\langle\alpha_{k},\lambda\rangle
        }_{\ge0}
        \underbrace{\langle\alpha_{k},\Lambda_{k}\rangle
        }_{=1}
\end{align*}
cannot vanish unless 
$\langle\alpha,\Lambda_{j}\rangle=0$. \\

{\it(ii)} By symmetry, we may assume again that $w=\id$. 
By taking the inner product of $\Lambda_{j}$ with
\eqref{convex combination lambda}, we obtain
\begin{align*}
    \langle\Lambda_{j},\lambda\rangle
    =\!\sum_{1\le k \le\ell}\!
        \langle\Lambda_{j},\Lambda_{k}\rangle
        \langle\alpha_{k},\lambda\rangle
    = |\Lambda_{j}|^2 
        \underbrace{\langle\alpha_{j},\lambda\rangle
        }_{\ge \frac{1}{\ell}}
      + \sum_{k\neq j} 
        \underbrace{
        \langle\Lambda_{j},\Lambda_{k}\rangle}_{\ge0}
        \underbrace{\langle\alpha_{k},\lambda\rangle
        }_{\ge0}
    \ge \frac{1}{\ell} |\Lambda_{j}|^2 ,
\end{align*}
according to the property \eqref{property dual basis}, 
and the fact that $\langle\alpha_{j},\lambda\rangle$ 
is the largest barycentric coordinates for
$\lambda\in\mathfrak{S}_{j}$. 
\end{proof}

Now, consider the tiling of the unit sphere obtained 
by projecting \eqref{tiling S gothic}:
\begin{align*}
    S(\mathfrak{a})
    = \bigcup_{w\in{W}}\bigcup_{1\le{j}\le\ell}w.S_{j}
\end{align*}
where $S_{j}$ are the projections of the barycentric
subdivisions $\mathfrak{S}_{j}$ on the unit sphere (see Figure 2).

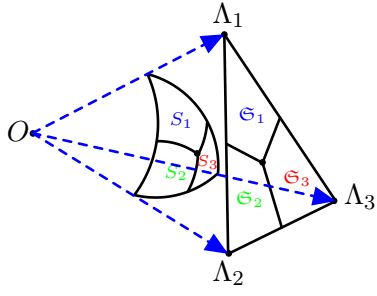
\begin{figure}[b]
    \vspace{-1cm}
    \centering
    \begin{tikzpicture}[line cap=round,line join=round,>=triangle 45,x=1.0cm,y=1.0cm, scale=0.7]
    \clip(-3.7929519970565737,-4.521248620259734) rectangle (8.648522372035192,5.214808192118163);
    \draw [line width=1.pt] (2.9507617363248198,0.15475396250089046)-- (3.6404547647133185,-0.20881041335112677);
    \draw [line width=1.pt] (3.6404547647133185,-0.20881041335112677)-- (3.973134704848195,0.6501522010641021);
    \draw [line width=1.pt] (3.6404547647133185,-0.20881041335112677)-- (4.006958916900773,-1.437959161361985);
    \draw [shift={(0.991027775282721,-0.26929961402305813)},line width=1.pt]  plot[domain=-0.07149224915632768:1.2951668884127652,variable=\t]({1.*1.809588514755417*cos(\t r)+0.*1.809588514755417*sin(\t r)},{0.*1.809588514755417*cos(\t r)+1.*1.809588514755417*sin(\t r)});
    \draw [shift={(-0.6300828027250419,0.5412556749808219)},line width=1.pt]  plot[domain=-0.635046782316099:0.4182243295792285,variable=\t]({1.*2.28608130852233*cos(\t r)+0.*2.28608130852233*sin(\t r)},{0.*2.28608130852233*cos(\t r)+1.*2.28608130852233*sin(\t r)});
    \draw [shift={(1.6114222917050398,0.8775466335624026)},line width=1.pt]  plot[domain=4.487112201170634:5.4452040821711964,variable=\t]({1.*1.7455836826856694*cos(\t r)+0.*1.7455836826856694*sin(\t r)},{0.*1.7455836826856694*cos(\t r)+1.*1.7455836826856694*sin(\t r)});
    \draw [line width=1.pt] (2.914584356650141,2.2349532937949)-- (2.987171397456459,-1.9388015525683633);
    \draw [line width=1.pt] (2.987171397456459,-1.9388015525683633)-- (5.031706380167742,-0.934680821414303);
    \draw [line width=1.pt] (5.031706380167742,-0.934680821414303)-- (2.914584356650141,2.2349532937949);
    \draw [shift={(0.8287959747820903,0.005410822034455465)},line width=1.pt]  plot[domain=5.780131647565373:6.250188889300229,variable=\t]({1.*1.5859206207166106*cos(\t r)+0.*1.5859206207166106*sin(\t r)},{0.*1.5859206207166106*cos(\t r)+1.*1.5859206207166106*sin(\t r)});
    \draw [shift={(1.1054860524878751,0.6510210033479509)},line width=1.pt]  plot[domain=5.793946023549244:6.224944672708353,variable=\t]({1.*1.4852904534760263*cos(\t r)+0.*1.4852904534760263*sin(\t r)},{0.*1.4852904534760263*cos(\t r)+1.*1.4852904534760263*sin(\t r)});
    \draw [shift={(1.658866207899445,-1.0678112975516147)},line width=1.pt]  plot[domain=0.9322906758433399:1.5898765127924257,variable=\t]({1.*1.2712695749279506*cos(\t r)+0.*1.2712695749279506*sin(\t r)},{0.*1.2712695749279506*cos(\t r)+1.*1.2712695749279506*sin(\t r)});
    \begin{scriptsize}
    \draw [fill] (-0.72,0.35) circle (1.5pt);
    \draw (-1,0.4) node {\large $O$};
    \draw [fill] (3,-1.93) circle (1.5pt);
    \draw (3,-2.3) node {\large $\Lambda_{2}$};
    \draw [fill] (2.915,2.23) circle (1.5pt);
    \draw (3,2.6) node {\large $\Lambda_{1}$};
    \draw [fill] (5,-0.93) circle (1.5pt);
    \draw (5.5,-0.85) node {\large $\Lambda_{3}$};
    \draw [color=blue] (3.45,0.7) node {$\mathfrak{S}_{1}$};
    \draw [color=green] (3.4,-0.9) node {$\mathfrak{S}_{2}$};
    \draw [color=red] (4.3,-0.5) node {$\mathfrak{S}_{3}$};
    \draw [color=blue] (2.1,0.6) node {$S_1$};
    \draw [color=green] (2,-0.4) node {$S_2$};
    \draw [color=red] (2.6,-0.2) node {\tiny $S_3$};
    \draw [fill] (3.64,-.2) circle (1.5pt);
    \draw [fill] (2.4,-.03) circle (1.5pt);
    \draw [->,line width=1.pt,dash pattern=on 3pt off 3pt,color=blue] (-0.7284086688607592,0.35576067574813774) -- (2.987171397456459,-1.938801552568363);
    \draw [->,line width=1.pt,dash pattern=on 3pt off 3pt,color=blue] (-0.7284086688607592,0.35576067574813774) -- (2.914584356650141,2.2349532937949);
    \draw [->,line width=1.pt,dash pattern=on 3pt off 3pt,color=blue] (-0.7284086688607592,0.35576067574813774) -- (5.031706380167742,-0.934680821414303);
    \end{scriptsize}
    \end{tikzpicture}
    \vspace{-1cm}
    \caption{Example of the projection in $A_3$}
    \end{figure}

We establish next a smooth version 
of the partition of unity
\begin{align*}
    \sum_{w \in W} \sum_{1\le j\le \ell}
    \mathbf{1}_{w.S_{j}} \big(
    {\textstyle \frac{\lambda}{|\lambda|}}\big)
    =1 \quad \textnormal{a.e.}.
\end{align*}
Let $\chi: \mathbb{R}\rightarrow [0,1]$ be a smooth 
cut-off function such that $\chi(r)=1$ when $r\ge0$
and $\chi(r)=0$ when $r\le -c_1$, where $c_1>0$ 
will be specified in \cref{remark constants}. 
For every $w\in W$ and $1\le j\le \ell$, we define
\begin{align*}
    \widetilde{\chi}_{w.S_{j}} (\lambda)
    =\!\prod_{1\le k\le \ell, k\neq j}\!
        \chi\Big( 
        \frac{\langle w.\alpha_{k},\lambda \rangle}
        {|\lambda|}\Big)
        \chi\Big( 
        \frac{\langle w.\alpha_{j},\lambda\rangle
        -\langle w.\alpha_{k},\lambda \rangle}
        {|\lambda|} \Big)
    \quad\forall\,\lambda\in
    \mathfrak{a}\smallsetminus\lbrace0\rbrace,
\end{align*}
and
\begin{align*}
    \widetilde{\chi}
    = \sum_{w \in W} \sum_{1\le j\le \ell}         
        \widetilde{\chi}_{w.S_{j}},
\end{align*}
which satisfy the following properties.

\begin{proposition}\label{subsection decompostion symbols}
Let $w\in W$ and $1\le j\le \ell$. For all 
$\lambda\in\mathfrak{a}\smallsetminus\lbrace0\rbrace$, we have 
\begin{enumerate}[leftmargin=*, label=(\roman*)]
    \item $\widetilde{\chi}_{w.S_{j}}(w.\lambda) 
        =\widetilde{\chi}_{S_{j}} (\lambda)$ and 
        $\widetilde{\chi}$ is $W$-invariant. 
        \vspace{5pt}
    \item $\widetilde{\chi}_{w.S_{j}}=1$ on $w.S_{j}$
        and $\widetilde{\chi} \ge 1$ on
        $\mathfrak{a}\smallsetminus\lbrace0\rbrace$.
        \vspace{5pt}
    \item $\widetilde{\chi}_{w.S_{j}}$ and
        $\widetilde{\chi}$ are homogeneous symbols 
        of order $0$.
\end{enumerate}
\end{proposition}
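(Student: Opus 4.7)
The three statements are essentially direct consequences of the $W$-equivariance of the inner product and of the basic tiling properties already used to define $\mathfrak{S}_j$, so my plan is to verify them in order, reducing (ii) and (iii) to the special case $w=\mathrm{id}$ via (i).

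For part \emph{(i)}, I would exploit the fact that the Weyl group acts by orthogonal transformations on $(\mathfrak{a},\langle\cdot,\cdot\rangle)$. In the defining product of $\widetilde{\chi}_{w.S_j}(w.\lambda)$, each argument of $\chi$ has the form $\langle w.\alpha_k,w.\lambda\rangle/|w.\lambda|$ (or a difference of two such), and I would simply observe that $\langle w.\alpha_k,w.\lambda\rangle=\langle\alpha_k,\lambda\rangle$ and $|w.\lambda|=|\lambda|$, giving the identity $\widetilde{\chi}_{w.S_j}(w.\lambda)=\widetilde{\chi}_{S_j}(\lambda)$. For the $W$-invariance of $\widetilde{\chi}$ itself, I would substitute in the full sum and re-index via $w''=w^{-1}w'$:
\begin{align*}
\widetilde{\chi}(w.\lambda)
=\sum_{w'\in W}\sum_{j}\widetilde{\chi}_{w'.S_j}(w.\lambda)
=\sum_{w'\in W}\sum_{j}\widetilde{\chi}_{(w^{-1}w').S_j}(\lambda)
=\widetilde{\chi}(\lambda).
\end{align*}

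For \emph{(ii)}, by part (i) it suffices to show $\widetilde{\chi}_{S_j}\equiv 1$ on $S_j$. Since all arguments of $\chi$ are scale-invariant, I can work with $\lambda\in\mathfrak{S}_j$ instead of on the sphere. By the very definition of $\mathfrak{S}_j\subset\overline{\mathfrak{a}^+}$, one has $\langle\alpha_k,\lambda\rangle\ge 0$ for every $k$ and $\langle\alpha_j,\lambda\rangle-\langle\alpha_k,\lambda\rangle\ge 0$ for every $k\ne j$; since $\chi\equiv 1$ on $[0,+\infty)$, every factor in the product equals $1$. For the second assertion, I note that each $\widetilde{\chi}_{w.S_j}$ takes values in $[0,1]$, so the sum is pointwise nonnegative; and for any $\lambda\in\mathfrak{a}\setminus\{0\}$ the point $\lambda/|\lambda|$ belongs to some tile $w.S_j$ of the sphere tiling, hence $\widetilde{\chi}_{w.S_j}(\lambda)=1$ by scale invariance and the first half of (ii). Therefore $\widetilde{\chi}(\lambda)\ge 1$.

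For \emph{(iii)}, I would observe that each argument of $\chi$ appearing in $\widetilde{\chi}_{w.S_j}$ is of the form $\langle\xi,\lambda\rangle/|\lambda|$, a smooth function on $\mathfrak{a}\setminus\{0\}$ that is homogeneous of degree $0$; since $\chi\in\mathcal{C}^\infty(\mathbb{R})$, the composition is smooth on $\mathfrak{a}\setminus\{0\}$ and homogeneous of degree $0$, hence bounded with all derivatives bounded on the unit sphere. Because derivatives of a degree-$0$ homogeneous function are homogeneous of degree $-|\alpha|$, this yields the symbol bounds $|\partial_\lambda^\alpha\widetilde{\chi}_{w.S_j}(\lambda)|\lesssim|\lambda|^{-|\alpha|}$ away from the origin, which is the standard meaning of a homogeneous symbol of order $0$. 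The same bounds transfer to $\widetilde{\chi}$ by linearity.

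No step poses a serious obstacle; the only real subtlety is keeping the $W$-covariance bookkeeping straight in (i) and making sure the sign conditions used in (ii) are exactly the ones encoded in the definition of $\mathfrak{S}_j$. The role of the constant $c_1$ (chosen in the forthcoming \cref{remark constants}) is not needed for the present statements, since only $\chi\equiv 1$ on $[0,+\infty)$ and $\chi\in[0,1]$ are used.
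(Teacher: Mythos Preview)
Your proof is correct and follows essentially the same approach as the paper's own proof, which is quite terse: the paper simply states that (i) ``follows immediately from the definitions'', proves (ii) by reducing to $w=\id$ and checking the same sign conditions you use, and declares (iii) ``obvious'' since each factor $\chi\big(\tfrac{\langle w.\alpha_k,\lambda\rangle}{|\lambda|}\big)$ and $\chi\big(\tfrac{\langle w.\alpha_j,\lambda\rangle-\langle w.\alpha_k,\lambda\rangle}{|\lambda|}\big)$ is a homogeneous symbol of order $0$. Your write-up supplies the details the paper omits (the re-indexing for $W$-invariance, the explicit symbol bounds via homogeneity), but the underlying argument is the same.
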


\begin{proof}
{\it(i)} follows immediately from the definitions.
In order to prove {\it(ii)}, we may assume that 
$w=\id$ by symmetry. For all $\lambda\in S_{j}$, 
we have
\begin{align*}
    \langle\alpha_{k},\lambda\rangle \ge 0
    \quad\textnormal{and}\quad 
    \langle\alpha_{j},\lambda\rangle 
    \ge \langle\alpha_{k},\lambda\rangle
\end{align*}
for every $1\le k\le \ell$ with $k\neq j$, hence
$\widetilde{\chi}_{S_{j}}(\lambda)=1$. 
We deduce straightforwardly that $\widetilde{\chi}\ge1$ 
on $\mathfrak{a}\smallsetminus\lbrace0\rbrace$. 
{\it(iii)} is obvious, since $\chi\big(
\frac{\langle w.\alpha_{k},\lambda\rangle}
{|\lambda|}\big)$ and 
$\chi\big(\frac{\langle w.\alpha_{j},\lambda\rangle 
-\langle w.\alpha_{k},\lambda\rangle}{|\lambda|}\big)$
are homogeneous symbols of order $0$ for all
$\lambda\in\mathfrak{a}\smallsetminus\lbrace0\rbrace$ 
and $1\le k\le \ell$.
\end{proof}

For every $w\in W$ and $1\le j\le \ell$, we set
\begin{align*}
\textstyle
    \chi_{w.S_{j}}
    = \frac{\widetilde{\chi}_{w.S_{j}}}{\widetilde{\chi}}
\end{align*}
on $\mathfrak{a}\smallsetminus\lbrace0\rbrace$. 
It follows from \cref{subsection decompostion symbols} that
$\chi_{w.S_{j}}(w.\lambda)=\chi_{S_{j}}(\lambda)$ and that
$\chi_{w.S_{j}}$ is a homogeneous symbol of order $0$. 
In particular, we have
\begin{align}\label{smooth partition}
    \sum_{w \in W}\sum_{1\le j\le \ell}
    \chi_{w.S_{j}} = 1
    \quad\textnormal{on}\quad \mathfrak{a}\smallsetminus\lbrace0\rbrace.
\end{align}
In addition, the vectors in the support of 
$\chi_{w.S_{j}}$ satisfy further properties, 
which require some preliminaries.

\begin{lemma}\label{lemma c2}
There exists $c_2>0$ such that, 
if $\lambda\in\mathfrak{a}$ satisfies
\begin{align*}
    -c_2 |\lambda| 
    \le \langle\alpha_{k},\lambda\rangle
    \le \langle\alpha_{j},\lambda\rangle 
        + c_2 |\lambda|
    \quad \forall\,k\in 
    \lbrace1,\dots,\ell\rbrace
    \smallsetminus\lbrace j\rbrace,
\end{align*}
for some $1\le j\le\ell$, then
$\langle\alpha_{j},\lambda\rangle \ge c_2 |\lambda|$.
\end{lemma}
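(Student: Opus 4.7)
The plan is a compactness-contradiction argument, exploiting only the fact that the simple roots $\alpha_1,\dots,\alpha_\ell$ form a basis of $\mathfrak{a}^{*}$. Since the hypotheses and conclusion are all homogeneous of degree one in $\lambda$, I may normalize to $|\lambda|=1$ from the start.

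Suppose no such $c_2$ exists. Then I can extract a sequence $c_n\to 0^{+}$, a sequence $\lambda_n\in\mathfrak{a}$ with $|\lambda_n|=1$, and indices $j_n\in\{1,\dots,\ell\}$ for which the hypotheses hold with $c_2=c_n$ while the conclusion fails, i.e.\ $\langle\alpha_{j_n},\lambda_n\rangle<c_n$. Since there are only finitely many choices for $j_n$, I pass to a subsequence to make $j_n$ a fixed index $j$; compactness of the unit sphere then yields a further subsequence with $\lambda_n\to\lambda^{*}$ and $|\lambda^{*}|=1$.

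The key step is to take the limit in the three inequalities and chain them. Letting $n\to\infty$, I obtain for every $k\ne j$
\[
    \langle\alpha_k,\lambda^{*}\rangle\ge0,\qquad
    \langle\alpha_k,\lambda^{*}\rangle\le\langle\alpha_j,\lambda^{*}\rangle,\qquad
    \langle\alpha_j,\lambda^{*}\rangle\le0.
\]
Combining the middle and the rightmost gives $\langle\alpha_k,\lambda^{*}\rangle\le 0$, which together with the leftmost forces $\langle\alpha_k,\lambda^{*}\rangle=0$ for all $k\ne j$. Plugging this back into the middle inequality yields $0\le\langle\alpha_j,\lambda^{*}\rangle$, and combined with the rightmost also $\langle\alpha_j,\lambda^{*}\rangle=0$. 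Since the simple roots form a basis of $\mathfrak{a}^{*}$, this forces $\lambda^{*}=0$, contradicting $|\lambda^{*}|=1$.

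The main obstacle is really just the bookkeeping: one must set up the negated conclusion as a strict inequality $\langle\alpha_{j_n},\lambda_n\rangle<c_n$ so that all three slack terms involve the same $c_n$ and disappear simultaneously in the limit. Once this is arranged, the chain of inequalities collapses cleanly and the basis property of $\Sigma_{s}^{+}$ closes the argument. In particular, the resulting constant $c_2$ may be chosen uniformly in $j\in\{1,\dots,\ell\}$ and depends only on the root system, as is needed when it feeds into the cutoff function $\chi$ of \cref{subsection decompostion symbols}.
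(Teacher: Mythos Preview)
Your compactness--contradiction argument is correct and complete. Note that it implicitly uses $\ell\ge2$ (otherwise there is no $k\ne j$ to feed back into the middle inequality and pin down the sign of $\langle\alpha_j,\lambda^*\rangle$), but this is the standing higher-rank assumption of the paper, and the statement itself is vacuously false in rank one.

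The paper takes a different, fully constructive route: it fixes $c_3>0$ from the equivalence of norms $\sum_k |\langle\alpha_k,\lambda\rangle|\ge c_3$ on the unit sphere, sets $c_2=c_3/(2\ell)$, and argues by a two-case pigeonhole. Either every $\langle\alpha_k,\lambda\rangle$ with $k\ne j$ lies in $[-c_2,2c_2]$, in which case the norm lower bound forces $|\langle\alpha_j,\lambda\rangle|\ge 2c_2$ (and the sign is then controlled via the hypothesis); or some $\langle\alpha_k,\lambda\rangle>2c_2$, in which case the upper hypothesis immediately gives $\langle\alpha_j,\lambda\rangle\ge c_2$. Your approach is cleaner and shows transparently why the basis property of $\Sigma_s^+$ is the only structural input needed. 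The paper's approach, by contrast, produces an explicit value of $c_2$ in terms of $c_3$ and $\ell$; this is what is subsequently used in \cref{remark constants} to choose the cut-off parameter $c_1$ and the derived constants $c_4,c_5$. Of course, your non-constructive $c_2$ would serve equally well in those downstream definitions, so nothing is lost.
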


\begin{proof}
By homogeneity, we may reduce to $|\lambda|=1$. 
Since all norms are equivalent on $\mathfrak{a}$, 
there exists $c_3>0$ such that
\begin{align}\label{equivalent norm c3}
    \sum_{1\le{k}\le\ell} 
    |\langle\alpha_{k},\lambda\rangle| \ge c_3
    \quad\forall\,\lambda\in{S(\mathfrak{a})}.
\end{align}
Set $c_2 = \frac{c_3}{2\ell}$. On the one hand, if 
\begin{align*}
    -c_{2}\le\langle\alpha_{k},\lambda\rangle\le2c_{2}
    \quad\forall\,{k}\in\lbrace1,\dots,\ell\rbrace 
    \smallsetminus\lbrace{j}\rbrace,
\end{align*}
then $\langle\alpha_{j},\lambda\rangle\ge 2c_2$. 
Otherwise,
\begin{align*}
    \sum_{1\le j\le \ell} |\langle\alpha_{k},\lambda\rangle|
    = \underbrace{
        |\langle\alpha_{j},\lambda\rangle|
        }_{<2c_2}
    +\sum_{k\neq j}         
        \underbrace{
        |\langle\alpha_{k},\lambda\rangle|}_{\le 2c_2}
    < 2\ell c_2 = c_3,
\end{align*}
which contradicts \eqref{equivalent norm c3}. 
On the other hand, if
\begin{align*}
    2c_2 
    \le \langle\alpha_{k},\lambda\rangle 
    \le \langle\alpha_{j},\lambda\rangle + c_2
\end{align*}
for some $k\in\lbrace 1,\dots,\ell\rbrace
\smallsetminus \lbrace j\rbrace$, then
$\langle\alpha_{j},\lambda\rangle \ge c_2$ is obvious.
\end{proof}

\begin{remark}\label{remark constants}
We clarify in this remark all constants appearing
in this subsection. Denote by $L_1$ the highest root 
length and by $L_2$ the sum of lengths of the dual basis
\begin{align*}
    L_1 
    = \max_{\alpha\in\Sigma^{+}} \sum_{1\le k\le \ell}
        \langle\alpha,\Lambda_{k}\rangle
    \quad\textnormal{and}\quad
    L_2 = \sum_{1\le k\le \ell} |\Lambda_{k}|.
\end{align*}
In addition, we denote by $M_1$ and $M_2$ the shortest
and the longest generators
\begin{align*}
    M_1 = \min_{1\le k\le \ell} |\Lambda_{k}|
    \quad\textnormal{and}\quad
    M_2 = \max_{1\le k\le \ell} |\Lambda_{k}|.
\end{align*}
Then we choose $c_1>0$ such that $c_1<c_2\min\lbrace
\frac{1}{L_1},\frac{M_{1}^{2}}{M_2L_2}\rbrace$, where
$c_2=\frac{c_3}{2\ell}$ with $c_3$ defined in
\eqref{equivalent norm c3}. Let $c_4=c_2 - L_1 c_1$ and
$c_5 = M_{1}^{2} c_2 - M_2 L_2 c_1$. 
Notice that $L_{1}\in\mathbb{N}^{*}$, $c_1<c_2$, $c_4>0$ 
and $c_5>0$. All these constants depend only on the
geometric structure of the roots system corresponding 
to $\mathbb{X}$.
\end{remark}

The following result is an analog of 
\cref{properties on S gothic} for the wider regions
$\supp\chi_{\omega.S_{j}}$.
\begin{proposition}\label{properties on Sj}
Let $w\in W$ and $1\le j\le \ell$. Then 
\begin{enumerate}[leftmargin=*,label=(\roman*)]
    \item a root $\alpha\in\Sigma$ satisfies either     
        $\langle\alpha,w.\Lambda_{j}\rangle=0$ or
        \begin{align}\label{estimate c4}
            |\langle\alpha,\lambda\rangle| 
            \ge c_4 |\lambda|
            \quad\forall\,
            \lambda\in\supp\chi_{w.S_{j}},
        \end{align}
    \item $|\langle{w}.\Lambda_{j},\lambda\rangle| 
    \ge c_5 |\lambda|$ for every
    $\lambda\in\supp\chi_{w.S_{j}}$.
\end{enumerate}
\end{proposition}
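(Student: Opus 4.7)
The plan is to reduce everything to the case $w=\id$ by $W$-symmetry, then translate membership in $\supp\chi_{w.S_{j}}$ into explicit inequalities on the barycentric coordinates $\langle\alpha_{k},\lambda\rangle$, and feed those into \cref{lemma c2}. The rest is a direct expansion using the dual basis.

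\textbf{Step 1 (Reduction).} By \cref{subsection decompostion symbols}(i), $\chi_{w.S_{j}}(w.\lambda)=\chi_{S_{j}}(\lambda)$, and the Weyl group acts by isometries preserving the pairing. Hence it suffices to prove both assertions when $w=\id$.

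\textbf{Step 2 (Support description).} Unwinding the definition of $\widetilde{\chi}_{S_{j}}$ and the hypotheses on $\chi$, any $\lambda\in\supp\chi_{S_{j}}\subset\supp\widetilde{\chi}_{S_{j}}$ satisfies, for every $k\in\{1,\dots,\ell\}\smallsetminus\{j\}$,
\begin{align*}
\langle\alpha_{k},\lambda\rangle\ge -c_{1}|\lambda|
\qquad\text{and}\qquad
\langle\alpha_{k},\lambda\rangle
\le\langle\alpha_{j},\lambda\rangle+c_{1}|\lambda|.
\end{align*}
Since $c_{1}<c_{2}$ (see \cref{remark constants}), the hypotheses of \cref{lemma c2} are met, yielding the key lower bound
\begin{align*}
\langle\alpha_{j},\lambda\rangle\ge c_{2}|\lambda|.
\end{align*}

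\textbf{Step 3 (Proof of (i)).} Let $\alpha\in\Sigma$; by replacing $\alpha$ with $-\alpha$ we may assume $\alpha\in\Sigma^{+}$. Then $\alpha=\sum_{k}\langle\alpha,\Lambda_{k}\rangle\alpha_{k}$ with nonnegative integer coefficients summing to at most $L_{1}$. Assume $\langle\alpha,\Lambda_{j}\rangle\ne 0$, so $\langle\alpha,\Lambda_{j}\rangle\ge 1$. Using Step 2 and the support bound $\langle\alpha_{k},\lambda\rangle\ge-c_{1}|\lambda|$ for $k\ne j$,
\begin{align*}
\langle\alpha,\lambda\rangle
=\langle\alpha,\Lambda_{j}\rangle\langle\alpha_{j},\lambda\rangle
+\sum_{k\ne j}\langle\alpha,\Lambda_{k}\rangle\langle\alpha_{k},\lambda\rangle
\ge c_{2}|\lambda|-L_{1}c_{1}|\lambda|=c_{4}|\lambda|,
\end{align*}
which gives \eqref{estimate c4}.

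\textbf{Step 4 (Proof of (ii)).} Since $\{\Lambda_{k}\}$ is the dual basis of $\{\alpha_{k}\}$, one has $\lambda=\sum_{k}\langle\alpha_{k},\lambda\rangle\Lambda_{k}$. Taking the inner product with $\Lambda_{j}$, using $\langle\Lambda_{j},\Lambda_{k}\rangle\ge 0$ from \eqref{property dual basis}, Cauchy–Schwarz, and Step 2,
\begin{align*}
\langle\Lambda_{j},\lambda\rangle
=|\Lambda_{j}|^{2}\langle\alpha_{j},\lambda\rangle
+\sum_{k\ne j}\langle\Lambda_{j},\Lambda_{k}\rangle\langle\alpha_{k},\lambda\rangle
\ge M_{1}^{2}c_{2}|\lambda|-M_{2}L_{2}c_{1}|\lambda|=c_{5}|\lambda|.
\end{align*}

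\textbf{Expected obstacle.} There is no deep obstacle: the whole proof rests on the calibration of the constants $c_{1},c_{2},c_{4},c_{5}$ in \cref{remark constants}. The only place where care is needed is verifying that these constants are indeed positive and that the support condition delivered by the cut-off $\chi$ (with slack $c_{1}$) is weak enough to fall inside the hypothesis of \cref{lemma c2} (which uses the larger $c_{2}$). Both are guaranteed by the choice $c_{1}<c_{2}\min\{1/L_{1},M_{1}^{2}/(M_{2}L_{2})\}$.
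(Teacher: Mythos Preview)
Your proof is correct and follows essentially the same approach as the paper: reduce to $w=\id$ by symmetry, read off the support inequalities (with slack $c_{1}$) from the definition of $\widetilde{\chi}_{S_{j}}$, apply \cref{lemma c2} to obtain $\langle\alpha_{j},\lambda\rangle\ge c_{2}|\lambda|$, and then expand $\langle\alpha,\lambda\rangle$ and $\langle\Lambda_{j},\lambda\rangle$ in the dual basis to get the bounds $c_{4}|\lambda|$ and $c_{5}|\lambda|$. The only organizational difference is that you isolate the application of \cref{lemma c2} as a separate step, which in fact makes the logic a bit more transparent.
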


\begin{proof}
{\it(i)} By symmetry, we may assume that $w=\id$ and 
that $\alpha$ is a positive root. 
Notice that $\langle\alpha,\Lambda_{j}\rangle$ is a
nonnegative integer, suppose that
$\langle\alpha,\Lambda_{j}\rangle >0$ and let us prove
\eqref{estimate c4}.
As
\begin{align*}
    -c_1|\lambda| 
    \le \langle\alpha_{k},\lambda\rangle
    \le \langle\alpha_{j},\lambda\rangle 
        + c_{1} |\lambda|
    \quad \forall\,\lambda\in\supp\chi_{S_{j}},\
    \forall\,k\in \lbrace1,\dots,\ell\rbrace
    \smallsetminus\lbrace{j}\rbrace,
\end{align*}
we have indeed
\begin{align*}
    \langle\alpha,\lambda\rangle
    =&\ \sum_{1\le k\le\ell} 
        \langle\alpha,\Lambda_{k}\rangle
        \langle\alpha_{k},\lambda\rangle \\[5pt]
    =&\ \underbrace{
        \langle\alpha,\Lambda_{j}\rangle}_{\ge1}
        \underbrace{
        \langle\alpha_{j},\lambda\rangle
        }_{\ge c_2|\lambda|}
        + \sum_{k\neq j}
        \langle\alpha,\Lambda_{k}\rangle
        \underbrace{\langle\alpha_{k},\lambda\rangle
        }_{\ge -c_1|\lambda|}
    \ge (c_{2}-L_1 c_{1})|\lambda|=c_{4}|\lambda|,
\end{align*}
according to \cref{lemma c2} since $c_1<c_2$. \\

{\it(ii)} By symmetry, we assume again $w=\id$. 
By taking the inner product of $\Lambda_{j}$ with
\eqref{convex combination lambda}, we obtain, for 
every $\lambda\in\supp\chi_{S_{j}}$,
\begin{align*}
    \langle\Lambda_{j},\lambda\rangle
    &=\sum_{1\le k \le\ell}
        \langle\Lambda_{j},\Lambda_{k}\rangle
        \langle\alpha_{k},\lambda\rangle
    =\underbrace{|\Lambda_{j}|^2}_{\ge M_{1}^{2}}
        \underbrace{
        \langle\alpha_{j},\lambda\rangle
        }_{\ge c_2|\lambda|}
      + \sum_{k\neq j} 
        \underbrace{
        \langle\Lambda_{j},\Lambda_{k}\rangle
        }_{\le |\Lambda_{j}||\Lambda_{k}|}
        \underbrace{
        \langle\alpha_{k},\lambda\rangle
        }_{\ge -c_1|\lambda|}\\
    &\ge (M_{1}^{2} c_2 - M_2 L_2 c_1) |\lambda|
    = c_5 |\lambda|.
\end{align*}
\end{proof}

\begin{remark}
The partition of unity \eqref{smooth partition} plays an important role
in the kernel analysis carried out in \cref{section kernel estiamtes}.
It allows us to overcome a well-known problem in spherical Fourier 
analysis in higher rank, namely the fact that the Plancherel density
is not a symbol in general. This new tool should certainly help
solving other problems.
\end{remark}

\section{Pointwise estimates of the wave kernel}
\label{section kernel estiamtes}

In this section, we derive pointwise estimates 
for the $K$-bi-invariant convolution kernel 
$\omega_{t}^{\sigma}$ of the operator 
$W_{t}^{\sigma}=(-\Delta)^{-\frac{\sigma}{2}} 
e^{it \sqrt{- \Delta}}$ on the symmetric space $\mathbb{X}$:
\begin{align*}
    W_{t}^{\sigma}  f(x) 
    = f*\omega_{t}^{\sigma}(x) 
    = \int_{G}dy\ \omega_{t}^{\sigma} (y^{-1}x) f(y)
\end{align*}
for suitable exponents $\sigma \in \mathbb{C}$. 
By using the inverse formula of the spherical Fourier transform,
we have
\begin{align*}
    \omega_{t}^{\sigma} (x) 
    =\, C_0 \int_{\mathfrak{a}}d\lambda\ 
        |\mathbf{c}(\lambda)|^{-2} \varphi_{\lambda}(x) 
        (|\lambda |^2+|\rho|^2)^{-\frac{\sigma}{2}} 
        e^{it\sqrt{|\lambda|^2+|\rho|^2}}
\end{align*}
Let us point out that the analysis of this oscillatory integral 
carried out on hyperbolic spaces or on symmetric spaces 
$G/K$ with $G$ complex (see \cite{AnPi2014,Zha2021}) 
does not hold in general since the Plancherel density
$|\mathbf{c}(\lambda)|^{-2}$ is no more a differential symbol. 
We write 
\begin{align*}
    \omega_{t}^{\sigma} (x) 
    =\, \frac{1}{\Gamma(\sigma)} 
        \int_{0}^{+\infty}ds\  s^{\sigma-1} 
        \underbrace{ 
        C_0 \int_{\mathfrak{a}}d\lambda\ 
        |\mathbf{c}(\lambda)|^{-2} \varphi_{\lambda}(x) 
        e^{-(s-it) \sqrt{|\lambda|^2+|\rho|^2}}
        }_{p_{s-it}(x)}.
\end{align*}
according to the formula
\begin{align*}
    r^{-\sigma} 
    = \frac{1}{\Gamma(\sigma)} 
    \int_{0}^{+ \infty}\frac{ds}{s}\ 
    s^{\sigma} e^{-sr}
    \quad\forall\,r>0.
\end{align*}
Here $p_{s-it}$ is the $K$-bi-invariant convolution kernel
of the Poisson operator $\mathcal{P}_{s-it}$. 
Let us split up $\omega_{t}^{\sigma}(x) 
=\omega_{t}^{\sigma,0}(x)+\omega_{t}^{\sigma,\infty}(x)$ with
\begin{align*}
    \omega_{t}^{\sigma,0}(x) 
    = \frac{1}{\Gamma(\sigma)} 
        \int_{0}^{1}ds\ s^{\sigma-1} p_{s-it}(x)
\end{align*}
and
\begin{align*}
    \omega_{t}^{\sigma,\infty}(x) 
    = \frac{1}{\Gamma(\sigma)} 
        \int_{1}^{+\infty}ds\ s^{\sigma-1} p_{s-it}(x).
\end{align*}

We shall see in \cref{subsection 2} that the kernel
$\omega_{t}^{\sigma,0} (x)$ has a logarithmic singularity
on the sphere $|x|=t$ when $\sigma = \frac{d+1}{2}$. 
To bypass this problem, we consider the analytic family 
of operators
\begin{align}\label{analytic family tilde W}
    \widetilde{W}_{t}^{\sigma,0}
    =  \underbrace{
        \frac{e^{\sigma^2}}
        {\Gamma(\frac{d+1}{2}-\sigma)\Gamma(\sigma)}
        }_{C_{\sigma,d}}
        \int_{0}^{1}ds\ s^{\sigma-1} \mathcal{P}_{s-it}
\end{align}
in the vertical strip $0\le\re\sigma\le\frac{d+1}{2}$ 
and the corresponding kernels
\begin{align*}
    \widetilde{\omega}_{t}^{\sigma,0}(x)
    = C_{\sigma,d} \int_{0}^{1}ds\ 
        s^{\sigma-1} p_{s-it}(x)
    \quad \forall\,x\in\mathbb{X}.
\end{align*}
Notice that the Gamma function $\Gamma(\frac{d+1}{2}-\sigma)$ 
allows us to deal with the boundary point 
$\sigma=\frac{d+1}{2}$, while the exponential function 
ensures boundedness at infinity in the vertical strip.
More precisely, by using the inequality 
\begin{align*}
    |\Gamma(z)|
    \ge\Gamma(\re{z})\,
    \big(\cosh(\pi\im{z})\big)^{-\frac{1}{2}}
    \quad\forall\,z\in\mathbb{C}
    \,\,\textnormal{with}\,\,\re{z}
    \ge\textstyle{\frac{1}{2}}
\end{align*}
(see for instance \cite[Eq.\,5.6.7]{DLMF}), we can estimate
\begin{align}\label{constant C sigma d}
\textstyle
    |C_{\sigma,d}|
    \lesssim |\sigma|\,|\sigma-\frac{d+1}{2}|\,
        e^{\pi\,|\im\sigma|-(\im\sigma)^2}
\end{align}
for all $\sigma\in\mathbb{C}$ with
$0\le\re\sigma\le\frac{d+1}{2}$.

We divide the argument into three parts depending whether
$|t|$ and $\frac{|x|}{|t|}$ are small or large. 
When $|t|$ is large but $\frac{|x|}{|t|}$ is sufficiently
small, we estimate $\widetilde{\omega}_{t}^{\sigma,0}$ 
in \cref{theorem subsection 1} by combining the method 
of stationary phase with our barycentric decomposition 
of Weyl chambers; when $\frac{|x|}{|t|}$ is large, we
estimate $\widetilde{\omega}_{t}^{\sigma,0}$ in
\cref{theorem subsection 2} by using the Hadamard
parametrix along the lines of \cite{CGM2001};
$\omega_{t}^{\sigma,\infty}(x)$ is easily handled by a
standard stationary phase argument, 
see \cref{theorem subsection 3}.
\vspace{5pt}

\subsection{Estimates of 
$\widetilde{\omega}_{t}^{\sigma,0}(x)$ when $|t|$ is 
large and $\frac{|x|}{|t|}$ is sufficiently small.}
\label{subsection 1}

According to the integral expression 
\eqref{integral expression of phi lambda} of the 
spherical functions, we write
\begin{align*}
    \widetilde{\omega}_{t}^{\sigma,0}(x)
    = C_{\sigma,d}\,C_0 
      \int_{K}dk\,e^{\langle\rho,A(kx)\rangle}
      \int_{0}^{1}ds\,s^{\sigma-1} I(s,t,x),
\end{align*}
where
\begin{align*}
    I (s,t,x)
    = \int_{\mathfrak{a}} d\lambda\     
        |\mathbf{c}(\lambda)|^{-2}
        e^{-s\sqrt{|\lambda|^2+|\rho|^2}}
        e^{it\psi_{t}(\lambda)}
\end{align*}
is an oscillatory integral with phase
\begin{align}\label{phase}
\textstyle
    \psi_{t}(\lambda) 
    = \sqrt{|\lambda|^2+|\rho|^2} 
    + \langle\frac{A(kx)}{t},\lambda\rangle.
\end{align}
Let us split up 
\begin{align*}
    I(s,t,x)
    = I^{-}(s,t,x) + I^{+}(s,t,x)
    = \int_{\mathfrak{a}}d\lambda\ 
        \chi_{0}^{\rho}(\lambda) \cdots
        \ +\ \int_{\mathfrak{a}}d\lambda\ 
        \chi_{\infty}^{\rho}(\lambda) \cdots
\end{align*}
by using smooth radial cut-off functions $\chi_{0}^{\rho}$
and $\chi_{\infty}^{\rho}=1-\chi_{0}^{\rho}$, where
$\chi_{0}^{\rho}(\lambda)$ equals $1$ when
$|\lambda|\le|\rho|$ and vanishes if 
$|\lambda|\ge2|\rho|$. 
Then we have the following estimates for $I^{-}$ 
and $I^{+}$.
\begin{proposition}\label{proposition I0- and I0+}
There exists $0<C_{\Sigma}\le\frac{1}{2}$ such that 
the following estimates hold when 
$0<s<1$, $|t|\ge1$ and
$\frac{|x|}{|t|} \le C_{\Sigma}$:
\begin{align}\label{estimate of I0-}
    |I^{-} (s,t,x)| 
    \lesssim |t|^{-\frac{D}{2}}\,(1+|x|)^{\frac{D-\ell}{2}},
\end{align}
and
\begin{align}\label{estimate of I0+}
    |I^{+} (s,t,x)| \lesssim |t|^{-N},
\end{align}
for every $N \in \mathbb{N}$.
\end{proposition}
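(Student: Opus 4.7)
The plan is to use the barycentric partition of unity $\sum_{w,j}\chi_{w.S_{j}}=1$ from \eqref{smooth partition} to split each integral $I^{\pm}=\sum_{w,j}I^{\pm}_{w,j}$ into contributions localized on the cones $\supp\chi_{w.S_{j}}$. By the $W$-invariance of the Plancherel density and the relation $\chi_{w.S_{j}}(w.\lambda)=\chi_{S_{j}}(\lambda)$, the substitution $\lambda\mapsto w.\lambda$ (replacing $A(kx)$ by $w^{-1}.A(kx)$, of the same magnitude) reduces each piece to the representative case $w=\id$. On $\supp\chi_{S_{j}}$, the Plancherel density behaves as a symbol of order $D-\ell$ with respect to the directional derivative $\partial_{\Lambda_{j}}$: the factors $|\mathbf{c}_{\alpha}(\langle\alpha,\lambda\rangle)|^{-2}$ with $\alpha\perp\Lambda_{j}$ are constant along $\Lambda_{j}$, while for $\alpha\not\perp\Lambda_{j}$, \cref{properties on Sj}(i) gives $|\langle\alpha,\lambda\rangle|\ge c_{4}|\lambda|$, placing us away from the one-dimensional singularity of $|\mathbf{c}_{\alpha}|^{-2}$. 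This is the key observation enabling the rest of the argument.

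For $I^{+}$, where $|\lambda|\ge|\rho|$, I would integrate by parts repeatedly along $\Lambda_{j}$ using the operator $L=(it\,\partial_{\Lambda_{j}}\psi_{t})^{-1}\partial_{\Lambda_{j}}$. The directional derivative of the phase decomposes as
\[
\partial_{\Lambda_{j}}\psi_{t}(\lambda)=\frac{\langle\Lambda_{j},\lambda\rangle}{\sqrt{|\lambda|^{2}+|\rho|^{2}}}+\Big\langle\Lambda_{j},\tfrac{A(kx)}{t}\Big\rangle,
\]
and \cref{properties on Sj}(ii) gives $\langle\Lambda_{j},\lambda\rangle\ge c_{5}|\lambda|$ on $\supp\chi_{S_{j}}$, so the first summand is bounded below by a positive constant once $|\lambda|\ge|\rho|$. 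Choosing $C_{\Sigma}$ small enough that the perturbation $\langle\Lambda_{j},A(kx)/t\rangle$ cannot cancel this lower bound yields a uniform bound from below on $|\partial_{\Lambda_{j}}\psi_{t}|$. Each application of the adjoint $L^{\ast}$ extracts a factor $|t|^{-1}$ while preserving integrability, thanks to the symbol behaviour above together with the uniform control of $e^{-s\sqrt{|\lambda|^{2}+|\rho|^{2}}}$ for $s\in(0,1)$. After $N$ iterations, $|I^{+}|\lesssim|t|^{-N}$.

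For $I^{-}$, where $|\lambda|\le2|\rho|$, the phase $\psi_{t}$ admits a unique stationary point $\lambda_{0}$ determined by $\lambda_{0}/\sqrt{|\lambda_{0}|^{2}+|\rho|^{2}}=-A(kx)/t$, satisfying $|\lambda_{0}|\asymp|x|/|t|$ with a non-degenerate Hessian of size $\asymp|\rho|^{-1}$, provided $C_{\Sigma}$ is taken small. On each $\supp\chi_{S_{j}}$ I would invoke the oscillatory integral estimate of \cref{Appendix A}, which combines the standard $\ell$-dimensional stationary phase (producing $|t|^{-\ell/2}$) with the quadratic vanishing $|\mathbf{c}(\lambda)|^{-2}\asymp\prod_{\alpha\in\Sigma_{r}^{+}}\langle\alpha,\lambda\rangle^{2}$ at the root walls (providing an additional $|t|^{-(D-\ell)/2}$). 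The polynomial weight $(1+|x|)^{(D-\ell)/2}$ emerges from bounding the symbol norms of the derivatives of the Plancherel density near $\lambda_{0}$.

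The main obstacle is closing the analysis of $I^{-}$: the Plancherel density is a symbol only along the single direction $\Lambda_{j}$, whereas stationary phase is intrinsically $\ell$-dimensional. The lemma of \cref{Appendix A} is engineered precisely to bridge this gap, by choosing coordinates adapted to $\Lambda_{j}$ (its axis plus the orthogonal $(\ell-1)$-dimensional slice) and integrating out the transverse directions using the quadratic vanishing at the walls. The delicate point will be to track the dependence in $|x|$ carefully enough so that the bound $(1+|x|)^{(D-\ell)/2}$ appears in a form that remains uniform in $s\in(0,1)$ and in $|x|/|t|\le C_{\Sigma}$, rather than settling for a weaker or less structured estimate.
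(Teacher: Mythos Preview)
Your treatment of $I^{+}$ is exactly the paper's: barycentric partition of unity, lower bound on $|\partial_{w.\Lambda_{j}}\psi_{t}|$ via \cref{properties on Sj}(ii) together with the smallness of $|A(kx)|/|t|$, and repeated integration by parts along $w.\Lambda_{j}$, exploiting that the Plancherel density behaves as a one-dimensional symbol in that direction thanks to \cref{properties on Sj}(i). No objection there.

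For $I^{-}$ you have the right endpoint but a mistaken picture of the route. The barycentric decomposition is \emph{not} used for $I^{-}$, and \cref{Appendix A} is not ``engineered to bridge'' a symbol-versus-stationary-phase gap via coordinates adapted to $\Lambda_{j}$. There is no such gap: on the compact region $|\lambda|\le 2|\rho|$ the Plancherel density is just a smooth bounded function, so ordinary $\ell$-dimensional stationary phase applies directly. The lemma in \cref{Appendix A} performs a standard Morse change of variables $\mu=\mathcal{M}(\lambda)^{1/2}(\lambda-\lambda_{0})$ based on the Hessian of $\psi_{t}$, then Taylor-expands the amplitude at $\mu=0$. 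The extra decay $|t|^{-(D-\ell)/2}$ and the weight $(1+|x|)^{(D-\ell)/2}$ come from a single elementary observation: the critical point satisfies $|\lambda_{0}|\asymp|x|/|t|$, hence the amplitude at the critical point obeys $|\mathbf{c}(\lambda_{0})|^{-2}\lesssim(|x|/|t|)^{D-\ell}$, and combining with the $|t|^{-\ell/2}$ from stationary phase plus the constraint $|x|/|t|\le C_{\Sigma}$ gives the stated bound. Your proposed splitting of $I^{-}$ along the cones $\supp\chi_{S_{j}}$ would not be wrong, merely superfluous; the substantive difficulty addressed by the barycentric machinery lives entirely in $I^{+}$, where $|\lambda|\to\infty$ and the failure of $|\mathbf{c}(\lambda)|^{-2}$ to be a symbol actually matters.
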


\begin{remark}
$C_{\Sigma}$ is a small constant depending only on 
the geometric structure of the root system $\Sigma$,  
which will be specified later in the proof of
\eqref{estimate of I0+}. 
Notice that the upper bounds \eqref{estimate of I0-} 
of $I^{-}$ and \eqref{estimate of I0+} of $I^{+}$ 
hold uniformly in $s\in(0,1)$.
\end{remark}

\begin{proof}
[Proof of the estimate \eqref{estimate of I0-}.]
Recall that
\begin{align*}
    I^{-} (s,t,x)
    = \int_{\mathfrak{a}} d\lambda\ 
        a_{0}(s,\lambda)
        e^{it\psi_{t}(\lambda)}
\end{align*}
is an oscillatory integral with amplitude
\begin{align*}
    a_{0}(s,\lambda) 
        = \chi_{0}^{\rho}(\lambda) 
        |\mathbf{c}(\lambda)|^{-2}
        e^{-s\sqrt{|\lambda|^2+|\rho|^2}}
\end{align*}
and phase $\psi_{t}(\lambda)$ which is defined by \eqref{phase}.
The amplitude $a_{0}(s,\lambda)$ is compactly 
supported for $|\lambda|\le2|\rho|$, and the phase 
$\psi_{t}$ has, in the support of $\chi_{0}^{\rho}$, 
a single nondegenerate critical point $\lambda_{0}$ 
which is given by
\begin{align}\label{critical point A/t}
\textstyle
    (|\lambda_{0}|^2+|\rho|^2)^{-\frac{1}{2}} 
    \lambda_{0} = -\frac{A}{t}
\end{align}
where $A=A(kx)$, and which satisfies
\begin{align}\label{critical point lambda 0}
\textstyle
    |\lambda_{0}| 
    = |\rho| \frac{|A|}{|t|}
        (1-\frac{|A|^2}{t^2})^{-\frac{1}{2}}
    \le |\rho| \frac{|x|}{t}
        (1-\frac{|x|^2}{t^2})^{-\frac{1}{2}}
    < \frac{|\rho|}{\sqrt{3}},
\end{align}
as $|A|\le|x|$ and $\frac{|x|}{t} \le C_{\Sigma} < \frac{1}{2}$. 
We conclude by resuming straightforwardly the computations 
carried out in the proof of \cite[Theorem 3.1]{Zha2021}.
For the sake of completeness and for the reader's convenience,
we include a detailed study of the oscillatory integral $I^{-}$ 
in \cref{Appendix A} (see \cref{Appendix A-lemma}).
\end{proof}

Let us turn to the oscillatory integral
\begin{align*}
    I^{+} (s,t,x)
    = \int_{\mathfrak{a}} d\lambda\ 
        \chi_{\infty}^{\rho}(\lambda) 
        |\mathbf{c}(\lambda)|^{-2}
        e^{-s\sqrt{|\lambda|^2+|\rho|^2}}
        e^{it\psi_{t}(\lambda)},
\end{align*}
which vanishes unless $|\lambda|>|\rho|$. According to
\eqref{critical point lambda 0}, $\psi_{t}$ has no critical
point in the support of $\chi_{\infty}^{\rho}$. 
In rank one or in higher rank with $G$ complex, one can
handle this integral by performing several integrations by parts. 
This approach fails in general since 
the Plancherel density $|\mathbf{c}(\lambda)|^{-2}$ 
is not a differential symbol.
To get around this problem, we split up the Weyl chamber
according to the barycentric decomposition carried out in
\cref{subsection decomposition}, and perform integrations
by parts based along a well chosen directional derivative in
each component.

\begin{proof}
[Proof of the estimate \eqref{estimate of I0+}.]
According to the partition of unity 
\eqref{smooth partition}, we split up
\begin{align*}
    I^{+} (s,t,x)
    =  \sum_{w\in W} \sum_{1\le j\le \ell}
        I_{w.S_{j}} (i\tau,x)
\end{align*}
with $\tau=s-it$, and we estimate
\begin{align}\label{I wSj}
    I_{w.S_{j}} (i\tau,x)
    = \int_{\mathfrak{a}} d\lambda\, 
      \chi_{w.S_{j}}(\lambda)\, 
      \chi_{\infty}^{\rho}(\lambda)\, 
      |\mathbf{c}(\lambda)|^{-2}\, 
      e^{-\tau\psi_{i\tau}(\lambda)}
\end{align}
by performing integrations by parts based on
\begin{align}\label{IBP}
    e^{-\tau\psi_{i\tau}(\lambda)}
    = \textstyle{-\frac{1}{\tau} 
    \frac{1}{\partial_{w.\Lambda_{j}} 
    \psi_{i\tau}(\lambda)}\,\partial_{w.\Lambda_{j}}} 
    e^{-\tau\psi_{i\tau} (\lambda)}.
\end{align}
Notice that 
\begin{align*}
\textstyle
    \partial_{w.\Lambda_{j}} \psi_{i\tau}(\lambda)
    = \langle w.\Lambda_{j},
        \frac{\lambda}{\sqrt{|\lambda|^2+|\rho|^2}}
        -i \frac{A(kx)}{\tau} \rangle
\end{align*}
is a symbol of order $0$, which satisfies in addition
\begin{align*}
    |\partial_{w.\Lambda_{j}} \psi_{i\tau}(\lambda)|
    \ge&\ \textstyle{
        \frac{|\langle{w}.\Lambda_{j},\lambda\rangle|}
        {\sqrt{|\lambda|^2+|\rho|^2}}
        - \big|\langle w.\Lambda_{j},\frac{A(kx)}{\tau}
        \rangle \big|} \\
    \ge&\ \textstyle{c_5     
        \frac{|\lambda|}{\sqrt{|\lambda|^2+|\rho|^2}}
        - |\Lambda_{j}| \frac{|A(kx)|}{|\tau|}
    \ge \frac{c_5}{\sqrt{2}}
        - M_2 \frac{|x|}{|t|}}
\end{align*}
on $(\supp \chi_{w.S_{j}})\cap(\supp\chi_{\infty}^{\rho})$
according to Proposition \ref{properties on Sj}, where
the constants $c_{5}$ and $M_2$ are specified in
\cref{remark constants}. 
By choosing 
$C_{\Sigma}=\min\lbrace\frac{c_5}{2M_2},\frac{1}{2}\rbrace$
, we obtain
\begin{align*}
\textstyle
    |\partial_{w.\Lambda_{j}} \psi_{i\tau}(\lambda)| 
    \ge \frac{\sqrt{2}-1}{2} c_5 > 0.
\end{align*}
Let us return to \eqref{I wSj}, which becomes
\begin{align*}
    I_{w.S_{j}} (i\tau,x)
    = {\textstyle \tau^{-N}} 
        \int_{\mathfrak{a}} & d\lambda\ 
        e^{-\tau\psi_{i\tau}(\lambda)} \\
    & \times {\textstyle
        \big\lbrace \partial_{w.\Lambda_{j}} 
        \circ \frac{1}{\partial_{w.\Lambda_{j}} 
        \psi_{i\tau}(\lambda)} \big\rbrace^{N}
        \big\lbrace \chi_{w.S_{j}}(\lambda) 
        \chi_{\infty}^{\rho}(\lambda) 
        |\mathbf{c}(\lambda)|^{-2} \big\rbrace},
\end{align*}
after $N$ integrations by parts based on \eqref{IBP}. 
If some derivatives hit $\chi_{\infty}^{\rho}(\lambda)$,
the above integral is reduced to the spherical shell
$|\lambda|\asymp|\rho|$ and thus converges. 
Assume that no derivative is applied to
$\chi_{\infty}^{\rho}(\lambda)$ and that
\begin{itemize}[leftmargin=*]\vspace{5pt}
    \item $N_1$ derivatives are applied to the factors
        $\frac{1}{\partial_{w.\Lambda_{j}}
        \psi_{i\tau}(\lambda)}$,\vspace{5pt}
    \item $N_2$ derivatives are applied to 
        $\chi_{w.S_{j}}(\lambda)$,\vspace{5pt}
    \item $N_3$ derivatives are applied to
        $|\mathbf{c}(\lambda)|^{-2}$,\vspace{5pt}
\end{itemize}
with $N=N_1+N_2+N_3$. The contribution of the first 
item is $O(|\lambda|^{-N_1})$, as 
$\partial_{w.\Lambda_{j}} \psi_{i\tau}(\lambda)$ is a 
symbol or order $0$, which stays away from $0$. 
Similarly, the contribution of the second item is
$O(|\lambda|^{-N_2})$, as $\chi_{w.S_{j}}(\lambda)$ 
is a symbol of order $0$ according to 
\cref{subsection decompostion symbols}. 
As far as the third item is concerned, the derivatives
$(\partial_{w.\Lambda_{j}})^{N_{3}}$ are applied to the
various factors in \eqref{Plancherel density}. 
According to \cref{properties on Sj}, for every $\lambda$
in the support of $\chi_{w.S_{j}}$, any root
$\alpha\in\Sigma$ satisfies either
$\langle\alpha,w.\Lambda_{j}\rangle=0$ or
$|\langle\alpha,\lambda\rangle|\gtrsim|\lambda|$. 
On the one hand, if $\langle\alpha,w.\Lambda_{j}\rangle=0$,
all derivatives
\begin{align*}
    (\partial_{w.\Lambda_{j}})^{N_{\alpha}}
    |\mathbf{c}_{\alpha}
    (\langle\alpha,\lambda\rangle)|^{-2}
    \quad\forall\,N_{\alpha}\in\mathbb{N}^{*}
\end{align*}
vanish. 
On the other hand, if $\langle\alpha,w.\Lambda_{j}\rangle\neq0$, 
we use the fact that $|\mathbf{c}_{\alpha}|^{-2}$ is a symbol 
on $\mathbb{R}$ of order $m_{\alpha}+m_{2\alpha}$, 
together with \eqref{estimate c4}, in order to estimate
\begin{align*}
    \big| (\partial_{w.\Lambda_{j}})^{N_{\alpha}}
    |\mathbf{c}_{\alpha}
    (\langle\alpha,\lambda\rangle)|^{-2} \big|
    \lesssim 
        |\langle\alpha,\lambda\rangle
        |^{m_{\alpha}+m_{2\alpha}-N_{\alpha}}
    \asymp
        |\lambda|^{m_{\alpha}+m_{2\alpha}-N_{\alpha}}
    \quad\forall\,N_{\alpha}\in\mathbb{N}
\end{align*}
for $\lambda\in(\supp\chi_{w.S_{j}})
\cap(\supp\chi_{\infty}^{\rho})$. 
Hence
\begin{align*}
    (\partial_{w.\Lambda_{j}})^{N_{3}} 
    |\mathbf{c} (\lambda)|^{-2}
    = O(|\lambda|^{d-\ell-N_{3}})
    \quad \forall\,\lambda\in (\supp\chi_{w.S_{j}})
    \cap(\supp\chi_{\infty}^{\rho}).
\end{align*}
In conclusion,
\begin{align*}
    |I_{w.S_{j}} (i\tau,x)| 
    \lesssim |
    \tau|^{-N} \int_{\mathfrak{a}}d\lambda\
        |\lambda|^{d-\ell-N_{1}-N_{2}-N_{3}}
    \lesssim |t|^{-N}
\end{align*}
provided that $N>d$, and consequently 
\begin{align*}
    I^{+}(s,t,x) =O(|t|^{-N}).
\end{align*}
\end{proof}

We deduce from \eqref{estimate of I0-} and 
\eqref{estimate of I0+} that, for all $0<s<1$,
$|t|\ge1$ and $\frac{|x|}{|t|}\le{C}_{\Sigma}$,
\begin{align}\label{estimate of I}
    |I(s,t,x)|
    \lesssim |t|^{-\frac{D}{2}}\,(1+|x|)^{\frac{D-\ell}{2}}
\end{align}
uniformly in $s$. 
Notice that
\begin{align*}
    \frac{\partial}{\partial{s}} I(s,t,x)
    = -\int_{\mathfrak{a}} d\lambda\,     
        |\mathbf{c}(\lambda)|^{-2}\,
        \sqrt{|\lambda|^2+|\rho|^2}\,
        e^{-s\sqrt{|\lambda|^2+|\rho|^2}}\,
        e^{it\psi_{t}(\lambda)}
\end{align*}
has the same phase as $I(s,t,x)$. 
Hence the estimate $\eqref{estimate of I}$ holds for
$\frac{\partial}{\partial{s}} I(s,t,x)$ by similar computations.
Since
\begin{align*}
    \int_{0}^{1}ds\,s^{\sigma-1} I(s,t,x)
    = \big[{\textstyle\frac{1}{\sigma}}\,
        s^{\sigma}\,I(s,t,x)\big]_{0}^{1}
    - {\textstyle\frac{1}{\sigma}}
        \int_{0}^{1}ds\,s^{\sigma}\,
        {\textstyle\frac{\partial}{\partial{s}}}\,
        I(s,t,x),
\end{align*}
we deduce that 
\begin{align*}
    \Big| C_{\sigma,d}
        \int_{0}^{1}ds\,s^{\sigma-1} I(s,t,x) \Big|
    \lesssim |t|^{-\frac{D}{2}}\,(1+|x|)^{\frac{D-\ell}{2}}
\end{align*}
according to \eqref{constant C sigma d}.
Then we obtain the following kernel estimate of
$\widetilde{\omega}_{t}^{\sigma,0}$.

\begin{theorem}\label{theorem subsection 1}
There exists $0<C_{\Sigma}\le\frac{1}{2}$ such that 
the following estimate holds, when $|t|\ge1$ and
$\frac{|x|}{|t|} \le C_{\Sigma}$, uniformly in the 
vertical strip $0\le\re\sigma\le\frac{d+1}{2}$:
\begin{align}
    |\widetilde{\omega}_{t}^{\sigma,0}(x)|
    \lesssim |t|^{-\frac{D}{2}} (1+|x|)^{\frac{D-\ell}{2}}
    \varphi_{0}(x).
\end{align}
\end{theorem}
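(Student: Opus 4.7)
The plan is to assemble the pieces already established. Inserting the integral representation \eqref{integral expression of phi lambda} of $\varphi_{\lambda}$ into the definition of $\widetilde{\omega}_{t}^{\sigma,0}$, one is led to
\begin{align*}
\widetilde{\omega}_{t}^{\sigma,0}(x)
= C_{\sigma,d}\,C_{0} \int_{K}dk\,e^{\langle\rho,A(kx)\rangle}
\int_{0}^{1}ds\,s^{\sigma-1}\,I(s,t,x),
\end{align*}
with the oscillatory integral $I(s,t,x)$ as in \cref{subsection 1}. Combining \eqref{estimate of I0-} and \eqref{estimate of I0+} from \cref{proposition I0- and I0+} yields the uniform-in-$s$ bound $|I(s,t,x)|\lesssim |t|^{-D/2}(1+|x|)^{(D-\ell)/2}$; this estimate is moreover uniform in $k\in K$, since the bound $|A(kx)|\le|x|$ underlies both the control of the critical point in the analysis of $I^{-}$ and the lower bound on $|\partial_{w.\Lambda_{j}}\psi_{t}|$ in the analysis of $I^{+}$.

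Next, I would observe that $\partial_{s}I(s,t,x)$ is an oscillatory integral with the same phase $\psi_{t}$, obtained by inserting an extra factor $-\sqrt{|\lambda|^{2}+|\rho|^{2}}$ into the amplitude. The stationary phase argument for $I^{-}$ and the barycentric integration-by-parts argument for $I^{+}$ go through unchanged (with $N$ taken one larger to absorb the extra power of $|\lambda|$ in $I^{+}$), so $|\partial_{s}I(s,t,x)|$ obeys the same bound as $|I(s,t,x)|$. To accommodate the possibly singular factor $s^{\sigma-1}$ at $s=0$ on the boundary $\re\sigma=0$ of the strip, I would then integrate by parts in $s$:
\begin{align*}
\int_{0}^{1}ds\,s^{\sigma-1}\,I(s,t,x)
= \tfrac{1}{\sigma}\,I(1,t,x) - \tfrac{1}{\sigma}\int_{0}^{1}ds\,s^{\sigma}\,\partial_{s}I(s,t,x),
\end{align*}
valid first on the open half-plane $\re\sigma>0$ and then extended by analytic continuation to the closed strip. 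The right-hand side is dominated by $C\,|\sigma|^{-1}\,|t|^{-D/2}(1+|x|)^{(D-\ell)/2}$.

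Multiplying by $|C_{\sigma,d}|$ and invoking \eqref{constant C sigma d}, the prefactor $|C_{\sigma,d}|/|\sigma|\lesssim|\sigma-\tfrac{d+1}{2}|\,e^{\pi|\im\sigma|-(\im\sigma)^{2}}$ is uniformly bounded on the vertical strip, the Gaussian factor dominating the polynomial as $|\im\sigma|\to\infty$. Finally, pulling absolute values through the $K$-integral and using $\int_{K}dk\,e^{\langle\rho,A(kx)\rangle}=\varphi_{0}(x)$ (the case $\lambda=0$ of \eqref{integral expression of phi lambda}) collapses the $K$-average to $\varphi_{0}(x)$ and yields the claimed bound. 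The only real subtlety is the uniform control at the boundary $\re\sigma=0$, where the original $s$-integral is only conditionally convergent; this is precisely what the integration by parts in $s$ is designed to bypass.
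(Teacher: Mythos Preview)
Your proof is correct and follows essentially the same approach as the paper: the same integral representation via \eqref{integral expression of phi lambda}, the same combination of \eqref{estimate of I0-} and \eqref{estimate of I0+}, the same observation that $\partial_{s}I$ obeys the identical bound, the same integration by parts in $s$ to handle the boundary $\re\sigma=0$, and the same use of \eqref{constant C sigma d} together with $\int_{K}e^{\langle\rho,A(kx)\rangle}dk=\varphi_{0}(x)$. Your additional remarks on uniformity in $k$ and on analytic continuation make explicit what is only implicit in the paper.
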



\subsection{Estimates of 
$\widetilde{\omega}_{t}^{\sigma,0}(x)$ 
in the remaining range}\label{subsection 2}

Recall that $\tau=s-it$ with $t\in\mathbb{R}^{*}$
and $s\in(0,1)$ throughout this subsection.
We are looking for pointwise estimates of
\begin{align*}
    \widetilde{\omega}_{t}^{\sigma,0}(x)
    = C_{\sigma,d} \int_{0}^{1} ds\ s^{\sigma-1}
        \underbrace{
            C_0 \int_{\mathfrak{a}} d\lambda\ 
            |\mathbf{c}(\lambda)|^{-2} \varphi_{\lambda}(x)
            \overbrace{
                e^{-\tau \sqrt{|\lambda|^2+|\rho|^2}}
            }^{\widetilde{p}_{\tau}(\lambda)}
        }_{p_{\tau}(x)}
    \quad \forall\, x\in\mathbb{X},
\end{align*}
where $p_{\tau}(x)$ is the Poisson kernel and 
$\widetilde{p}_{\tau}(\lambda)$ denotes its spherical Fourier
transform. This subsection focuses on pointwise estimates of
$p_{\tau}$ along the lines of \cite[pp.1054-1063]{CGM2001}.

\begin{remark}
Notice that the exponential factor ensures the convergence of 
the integral defining $p_{\tau}$, but yields a large negative 
power $s^{-d}$. Then $\widetilde{\omega}_{t}^{\sigma,0}$ 
converges under the strong smoothness assumption $\re\sigma\ge{d}$. 
We will sharpen it to $\re\sigma=\frac{d+1}{2}$. 
Notice that the stationary phase method carried out in the
previous subsection fails since the critical point can be 
very large when $\frac{|x|}{|t|}$ is not bounded from above.
\end{remark}

As in \cite{CGM2001}, let us denote by
$p_{\tau}^{\mathbb{R}}(v) = \frac{\tau}{\pi(\tau^2+v^2)}$ 
the Poisson kernel on $\mathbb{R}$. We may write
\begin{align*}
    \widetilde{p}_{\tau}(\lambda)
    = e^{-\tau\sqrt{|\lambda|^2+|\rho|^2}}
    = \int_{\mathbb{R}} dv\ p_{\tau}^{\mathbb{R}}(v)
        \cos(v\sqrt{|\lambda|^2+|\rho|^2})
    \quad \forall\lambda\in\mathfrak{a}.
\end{align*}
Consider a smooth even cut-off function 
$\chi: \mathbb{R}\rightarrow[0,1]$, which is supported in
$[-\sqrt{2},\sqrt{2}]$, and equals $1$ on $[-1,1]$. 
Denote by $\chi_{T}=\chi(\frac{\cdot}{2T})$ with 
$T=\sqrt{2}$ (uniformly in $t$) when $|t|\le1$ or $T=\sqrt{2}|t|$ when $|t|\ge1$. 
Then $\chi_{T}$ is supported in 
$[-2\sqrt{2}T,2\sqrt{2}T]\subset(-3T,3T)$. 
We denote by $a_{\tau}$ and $b_{\tau}$ the $K$-bi-invariant 
kernels of operators
\begin{align*}
    A_{\tau}
    = \int_{-\infty}^{+\infty} dv\,
        \chi_{T}(v)\,
        p_{\tau}^{\mathbb{R}}(v)\,
        \cos(v\sqrt{-\Delta})
\end{align*}
and
\begin{align*}
    B_{\tau}
    = \int_{-\infty}^{+\infty} dv\, 
        \lbrace1-\chi_{T}(v)\rbrace\,
        p_{\tau}^{\mathbb{R}}(v)\,
        \cos(v\sqrt{-\Delta}).
\end{align*}
Then $p_{\tau}=a_{\tau}+b_{\tau}$ and $a_{\tau}$ is 
supported in a ball of radius $3T$ in $\mathbb{X}$ by 
finite propagation speed. $b_{\tau}$ is easily estimated 
by straightforward computations, see \cref{b tau}. 
In order to analyze $a_{\tau}$, we will expand
$\cos(v\sqrt{-\Delta})$ by using the Hadamard parametrix.

Let $\lbrace{R_{+}^{z}\,|\,z\in\mathbb{C}}\rbrace$ be the 
analytic family of Riesz distributions on $\mathbb{R}$ 
defined by
\begin{align*}
    R_{+}^{z}(r)
    = \begin{cases}
        \Gamma (z)^{-1} r^{z-1}
        &\textnormal{if \,} r>0, \\[5pt]
        0 &\textnormal{if \,} r\le0,
    \end{cases}
\end{align*}
for $\re{z}>0$. 
The $K$-bi-invariant convolution kernel $\Phi_{v}$ 
of the operator $\cos(v\sqrt{-\Delta})$ has the asymptotic expansion
\begin{equation}\label{Expansion}\begin{aligned}
    \Phi_{v}(\exp H)
    &= J(H)^{-\frac{1}{2}}
        \sum_{k=0}^{[d/2]} 4^{-k}\,|v|\,U_{k}(H)\,
        R_{+}^{k-\frac{d-1}{2}}(v^2-|H|^2)\\
    &+ E_{\Phi}(v,H)
\end{aligned}\end{equation}
where
\begin{align*}
    J(H) = \prod_{\alpha\in\Sigma^{+}}
        \Big(\frac{\sinh\langle\alpha,H\rangle}
        {\langle\alpha,H\rangle}\Big)^{m_{\alpha}}
\end{align*}
denotes the Jacobian of the exponential map from $\mathfrak{p}$
equipped with Lebesgue measure to $\mathbb{X}$ equipped with
Riemannian measure. 
Moreover, the coefficients satisfy
\begin{align}\label{Estimates U}
    \nablaP^{n} U_{k}= O(1)
\end{align}
for every $ k,n\in\mathbb{N}$,
and the remainder is estimated as
\begin{align}\label{Estimates E Phi}
    |E_{\Phi}(v,H)| 
    \lesssim (1+v)^{3(\frac{d}{2}+1)} 
        e^{-\langle\rho,H\rangle}.
\end{align}
The Hadamard parametrix has been described and applied in various
settings, see for instance \cite{Ber1977,Hor1994,CGM2001}. 
For the reader's convenience, we give in \cref{Appendix B}
some details about this construction in the particular case of 
noncompact symmetric spaces.
By resuming the proof of Lemma 3.3 in \cite{CGM2001}
(see \cref{Appendix C} for details), we deduce the following
expansion of the $K$-bi-invariant convolution kernel $a_{\tau}$
of the operator $A_{\tau}$:
\begin{align}\label{a tau expansion}
    a_{\tau}(\exp H) 
    =& \frac{\tau}{\pi}\,J(H)^{-\frac{1}{2}}\,
        \sum_{k=0}^{[d/2]} 4^{-k}\, U_{k}(H)\,
        {\textstyle\Gamma\big(\frac{d+1}{2}-k\big)}\,
        (|H|^2+\tau^2)^{k-\frac{d+1}{2}}
        \nonumber\\
    +& E(\tau,H)
\end{align}
where 
\begin{align}\label{error part}
    |E(\tau,H)| 
    \lesssim T^{3(\frac{d}{2}+1)}\,
    (\log{T}-\log{s})\,
    e^{-\langle\rho,H\rangle}
    \quad\forall\,H\in\frakACL.
\end{align}

\begin{remark}
As a consequence, we may deduce that
\begin{align*}
    |a_{\tau}(\exp H)| 
    \lesssim 
        s^{-\frac{d+1}{2}}\,e^{-\langle\rho,H\rangle}
        \begin{cases}
            |t|^{-\frac{d-1}{2}} 
            \quad& \textnormal{if $|t|$ is small},\\[5pt]
            |t|^{3(\frac{d}{2}+1)}\,\log{|t|}
            \quad& \textnormal{if $|t|$ is large},
        \end{cases}
\end{align*}
for all $H\in\frakACL$. However, we cannot apply
straightforwardly such estimates to study the kernel
$\omega_{t}^{\sigma}$, since it kills the imaginary part 
of $\sigma$ and yields a logarithmic singularity on the 
sphere $|x|=t$ when $\sigma\in\mathbb{C}$ with
$\re\sigma=\frac{d+1}{2}$.
\end{remark}

The following proposition concerning the estimate of 
$b_{\tau}$ will be proved by straightforward computations.
\begin{proposition}\label{b tau}
Let $N>d$ be an even integer. Then 
\begin{align}\label{b tau estimate}
    |b_{\tau}(x)| 
    \lesssim 
    (1+|t|)^{-N}\,
    \varphi_{0}(x)
\end{align}
for every $x\in\mathbb{X}$ and for every $\tau=s-it$ with 
$s\in(0,1]$ and $t\in\mathbb{R}^{*}$.
\end{proposition}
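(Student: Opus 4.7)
The plan is to work on the spherical Fourier transform side, where the Poisson kernel factor becomes a scalar multiplier and the basic bound $|\varphi_\lambda(x)| \le \varphi_0(x)$ immediately produces the desired factor $\varphi_0(x)$. Concretely, by the inverse spherical Fourier transform,
\begin{align*}
    b_\tau(x) = C_0 \int_\mathfrak{a} d\lambda\, |\mathbf{c}(\lambda)|^{-2}\, \varphi_\lambda(x)\, \widetilde{b}_\tau(\lambda),
    \qquad
    \widetilde{b}_\tau(\lambda) = \int_\mathbb{R} dv\,\{1-\chi_T(v)\}\, p_\tau^\mathbb{R}(v)\, \cos(v\xi),
\end{align*}
where $\xi = \sqrt{|\lambda|^2+|\rho|^2}$. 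Using $|\varphi_\lambda(x)| \le \varphi_0(x)$, it suffices to prove $\int_\mathfrak{a} |\mathbf{c}(\lambda)|^{-2}\,|\widetilde{b}_\tau(\lambda)|\,d\lambda \lesssim (1+|t|)^{-N}$.

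The next step is to integrate by parts $N$ times in the $v$ variable, using $\cos(v\xi) = \xi^{-1}\partial_v\sin(v\xi)$ (and similar). Since $\{1-\chi_T(v)\}\,p_\tau^\mathbb{R}(v)$ decays at infinity, all boundary terms vanish and we obtain
\begin{align*}
    \widetilde{b}_\tau(\lambda)
    = \frac{(-1)^N}{\xi^N}\int_\mathbb{R} dv\,
       \partial_v^N\bigl\{(1-\chi_T(v))\,p_\tau^\mathbb{R}(v)\bigr\}
       \cdot[\cos \text{ or }\sin](v\xi).
\end{align*}
I would expand via the Leibniz rule. Each term is supported in $|v|\ge T$, where $T=\sqrt{2}\max(1,|t|)$. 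Since $T\gtrsim|\tau|$, the partial-fraction identity
\begin{align*}
    \partial_v^N p_\tau^\mathbb{R}(v)
    = \frac{(-1)^N N!}{2\pi i}\Bigl[(v-i\tau)^{-N-1} - (v+i\tau)^{-N-1}\Bigr]
\end{align*}
combined with $|v\pm i\tau|\asymp|v|$ on $|v|\ge T$ yields $|\partial_v^N p_\tau^\mathbb{R}(v)|\lesssim|\tau|\,|v|^{-N-2}$, while the cutoff derivatives satisfy $|\partial_v^k\chi_T|\lesssim T^{-k}$ with support in $T\le|v|\le 2T$. Integrating either type of contribution over $|v|\gtrsim T$ produces $|\tau|\,T^{-N-1}\lesssim(1+|t|)^{-N}$, using $|\tau|\le 1+|t|$ and $T\asymp 1+|t|$. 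Hence
\begin{align*}
    |\widetilde{b}_\tau(\lambda)| \lesssim (1+|t|)^{-N}\,\xi^{-N}.
\end{align*}

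Finally, combining $\xi\ge|\rho|$, the Plancherel bound $|\mathbf{c}(\lambda)|^{-2}\lesssim(1+|\lambda|)^{d-\ell}$, and polar coordinates on $\mathfrak{a}$, the remaining integral $\int_\mathfrak{a}(1+|\lambda|)^{d-\ell-N}\,d\lambda$ converges precisely when $N>d$, giving the claimed bound $|b_\tau(x)|\lesssim(1+|t|)^{-N}\,\varphi_0(x)$. The parity assumption on $N$ is a mere convenience, ensuring that the oscillating factor after $N$ integrations by parts is again $\cos$; one could equally well absorb a $\sin$ by $|\sin|\le 1$. The only delicate point is verifying that the poles $v=\pm i\tau$ of $p_\tau^\mathbb{R}$ remain far from the integration region, which reduces to the elementary estimate $T\ge\sqrt{2}\max(1,|t|)\ge|\tau|/\sqrt{2}$; once this is in hand, every subsequent estimate is a routine bookkeeping of the Leibniz expansion.
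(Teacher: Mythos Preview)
Your proof is correct and follows essentially the same route as the paper's: pass to the spherical Fourier side, bound $|\varphi_\lambda|\le\varphi_0$, integrate by parts $N$ times in $v$ to gain both $\xi^{-N}$ and $T^{-N}$, then use $N>d$ for convergence of $\int_\mathfrak{a}|\mathbf{c}(\lambda)|^{-2}\xi^{-N}\,d\lambda$. The only cosmetic differences are that the paper integrates by parts in pairs via $\cos(v\xi)=-\xi^{-2}\partial_v^2\cos(v\xi)$ (which is why $N$ even enters naturally) and estimates $\partial_v^N\{(1-\chi_T)/(\tau^2+v^2)\}$ directly rather than through your partial-fraction cancellation; your observation that the parity assumption is inessential is correct. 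A couple of harmless imprecisions: the support of $1-\chi_T$ is $|v|\ge 2T$ (not $|v|\ge T$) and the derivatives of $\chi_T$ live in $2T\le|v|\le 2\sqrt2\,T$, but this only affects constants.
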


\begin{proof}
Let us study
\begin{align*}
    B_{\tau}(\lambda)
    = \frac{2\tau}{\pi} \int_{0}^{+\infty}dv\,
        \lbrace1-\chi_{T}(v)\rbrace\,
        \frac{1}{\tau^2+v^2}\, 
        \cos(v\sqrt{|\lambda|^2+|\rho|^2}),
\end{align*}
which vanishes unless $v>2T$. 
By performing $N$ integrations by parts based on 
\begin{align*}
    \cos(v\sqrt{|\lambda|^2+|\rho|^2})
    = -\frac{1}{|\lambda|^2+|\rho|^2}\,   
        \frac{\partial^2}{\partial v^2}\,
        \cos(v\sqrt{|\lambda|^2+|\rho|^2}),
\end{align*}
we obtain
\begin{align*}
    B_{\tau}(\lambda)
    &= \frac{2\tau}{\pi}\,(-1)^{-\frac{N}{2}}
        (|\lambda|^2+|\rho|^2)^{-\frac{N}{2}}\\
     &\times\int_{0}^{+\infty}dv\, 
        \cos(v\sqrt{|\lambda|^2+|\rho|^2})\,
        \textstyle{ 
        \big(\frac{\partial}{\partial v})^{N}\,
        (\frac{1-\chi_{T}(v)}{\tau^2+v^2}\big)}.
\end{align*}
Since $v>2T$, we have $|\tau^2+v^2|\gtrsim v^2$ uniformly 
in $\tau=s-it$. Hence
\begin{align*}
    B_{\tau}(\lambda) 
    \lesssim |\tau|\,(|\lambda|^2+|\rho|^2)^{-\frac{N}{2}}\,
        \int_{2T}^{+\infty}dv\,v^{-2-N}\,
    \lesssim |T|^{-N}\,
        (|\lambda|^2+|\rho|^2)^{-\frac{N}{2}}.
\end{align*}
By the inverse formula of the spherical Fourier transform, 
we deduce
\begin{align*}
    |b_{\tau}(x)|
    =&\ \Big|\int_{\mathfrak{a}}d\lambda\,    
        |\mathbf{c}(\lambda)|^{-2}\,
        \varphi_{\lambda}(x)\, 
        B_{\tau}(\lambda)\Big|\\[5pt]
    \lesssim&\ |T|^{-N}\,\varphi_{0}(x)\, 
        \int_{\mathfrak{a}}d\lambda\,
        |\mathbf{c}(\lambda)|^{-2}\,
        (|\lambda|^2+|\rho|^2)^{-\frac{N}{2}}
\end{align*}
where the last integral converges provided that $N>d$.
\end{proof}

According to the asymptotic expansion \eqref{a tau expansion}
of $a_{\tau}$ and to the estimate \eqref{b tau estimate} of
$b_{\tau}$, we establish the pointwise estimates of
$\widetilde{\omega}_{t}^{\sigma,0}$ in the case where
$\frac{|x|}{|t|}$ is bounded from below.

\begin{theorem}\label{theorem subsection 2}
Let $\sigma\in\mathbb{C}$ with $\re\sigma=\frac{d+1}{2}$. 
The following estimates hold for all $t\in\mathbb{R}^{*}$ 
and $x\in\mathbb{X}$.
\begin{enumerate}[leftmargin=*, label=(\roman*)]
    \item 
    If $0<|t|<1$, then
    \begin{align*}
        |\widetilde{\omega}_{t}^{\sigma,0}(x)|
        \lesssim |t|^{-\frac{d-1}{2}}\,
        (1+|x^{+}|)^{\frac{\max\lbrace{d,D}\rbrace-\ell}{2}}\, 
        e^{-\langle\rho,x^{+}\rangle}.
    \end{align*}
    \item 
    If $|t|\ge1$ and $\frac{|x|}{|t|}>C_{\Sigma}$, then
    \begin{align*}
    |\widetilde{\omega}_{t}^{\sigma,0}(x)|
    \lesssim |t|^{-N_1}\,(1+|x^{+}|)^{N_2}\, 
    e^{-\langle\rho,x^{+}\rangle},
    \end{align*}
    for every $N_{1}\in\mathbb{N}$ and 
    $N_2\ge{N}_{1}+2(d+1)+ \frac{\max\lbrace{d,D}\rbrace-\ell}{2}$.
\end{enumerate}
\end{theorem}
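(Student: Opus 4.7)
The plan is to decompose the kernel via the splitting $p_{\tau}=a_{\tau}+b_{\tau}$ introduced above, and estimate each contribution to
\begin{align*}
    \widetilde{\omega}_{t}^{\sigma,0}(x)
    = C_{\sigma,d}\int_{0}^{1}ds\,s^{\sigma-1}p_{s-it}(x)
\end{align*}
separately. The $b_{\tau}$-contribution is handled directly: \cref{b tau} combined with the bound \eqref{constant C sigma d} and the convergence of $\int_{0}^{1}s^{\re\sigma-1}ds$ (as $\re\sigma=\frac{d+1}{2}>0$) yields
\begin{align*}
    \Big|C_{\sigma,d}\int_{0}^{1}ds\,s^{\sigma-1}b_{\tau}(x)\Big|
    \lesssim(1+|t|)^{-N}\varphi_{0}(x),
\end{align*}
which is stronger than both asserted bounds after invoking $\varphi_{0}(x)\lesssim(1+|x^{+}|)^{|\Sigma_{r}^{+}|}e^{-\langle\rho,x^{+}\rangle}$.

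For the $a_{\tau}$-contribution, finite propagation speed forces $|H|\le 3T$, and we apply the Hadamard expansion \eqref{a tau expansion}, treating main terms and remainder separately. For the remainder $E(\tau,H)$, the estimate \eqref{error part} together with $\int_{0}^{1}s^{\re\sigma-1}|\log s|\,ds=O(1)$ (valid since $\re\sigma\ge 2$) gives
\begin{align*}
    \Big|C_{\sigma,d}\int_{0}^{1}ds\,s^{\sigma-1}E(s-it,H)\Big|
    \lesssim T^{3(d/2+1)}\,(1+\log T)\,e^{-\langle\rho,H\rangle}.
\end{align*}
In case (i), $T=\sqrt{2}$ is bounded, so this is $\lesssim e^{-\langle\rho,x^{+}\rangle}$, comfortably absorbed by the target. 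In case (ii), $T\asymp|t|$; the hypothesis $|x^{+}|>C_{\Sigma}|t|$ together with $\rho$ lying in the open Weyl chamber gives $\langle\rho,x^{+}\rangle\gtrsim|t|$, and splitting $e^{-\langle\rho,x^{+}\rangle}$ into two equal factors lets the first absorb the polynomial prefactor $T^{3(d/2+1)}(1+\log T)\lesssim|t|^{P}$, yielding the desired $|t|^{-N_{1}}(1+|x^{+}|)^{N_{2}}e^{-\langle\rho,x^{+}\rangle}$ via the crude inequality $|t|\le|x^{+}|/C_{\Sigma}$.

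For the $k$-th main term of \eqref{a tau expansion}, we factor out $|J(H)^{-1/2}U_{k}(H)|\lesssim(1+|H|)^{P_{k}}e^{-\langle\rho,H\rangle}$, which follows from \eqref{Estimates U} and the standard asymptotics of $J(H)^{-1/2}$. The remaining object to control is
\begin{align*}
    \mathcal{I}_{k}(\sigma,t,|H|)
    :=C_{\sigma,d}\,\Gamma\bigl(\tfrac{d+1}{2}-k\bigr)
    \int_{0}^{1}ds\,s^{\sigma-1}\tau\,(|H|^{2}+\tau^{2})^{k-\frac{d+1}{2}},
\end{align*}
to which we apply the identity $2\tau(|H|^{2}+\tau^{2})^{k-\frac{d+1}{2}}=\frac{d}{ds}\bigl[\frac{(|H|^{2}+\tau^{2})^{k-\frac{d-1}{2}}}{k-\frac{d-1}{2}}\bigr]$ (with a logarithmic variant when $k=\frac{d-1}{2}$) and iterate integration by parts in $s$. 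Each step raises the exponent by one; the boundary terms at $s=0$ vanish for $\re\sigma$ sufficiently large (the regime where we continue analytically), while those at $s=1$ are controlled directly via $|(|H|^{2}+(1-it)^{2})|\ge 2|t|$. Continuing until the exponent becomes nonnegative reduces $\mathcal{I}_{k}$ to absolutely convergent pieces, bounded crudely via $|(|H|^{2}+\tau^{2})|\ge\max\{||H|^{2}+s^{2}-t^{2}|,\,2s|t|\}$.

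The main obstacle lies precisely at $\re\sigma=\frac{d+1}{2}$ with $|H|=|t|$: the $k=0$ term in $\mathcal{I}_{k}$ is logarithmically divergent at $s=0$, reflecting the light-cone singularity of the wave kernel. This is resolved by the simple zero of $1/\Gamma(\frac{d+1}{2}-\sigma)$ sitting inside $C_{\sigma,d}$, which cancels the simple pole of the $s$-integral as a function of $\sigma$. Rigorously, we carry out the analysis for $\re\sigma$ large and continue analytically in $\sigma$, with the factor $C_{\sigma,d}\Gamma(\frac{d+1}{2}-k)$ absorbing any arising Gamma poles. In case (i), this produces the sharp singular behaviour $|t|^{-(d-1)/2}$ from the leading $k=0$ contribution near the light cone, multiplied by polynomial growth $(1+|x^{+}|)^{P}$ (with $P\le\tfrac{\max\{d,D\}-\ell}{2}$) and the exponential $e^{-\langle\rho,x^{+}\rangle}$. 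In case (ii), $e^{-\langle\rho,x^{+}\rangle}\lesssim e^{-cC_{\Sigma}|t|}$ trivially dominates every polynomial in $|t|$, while the relation $|x^{+}|\gtrsim|t|$ provides the bookkeeping between $N_{1}$ and $N_{2}$.
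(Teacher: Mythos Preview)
Your overall architecture coincides with the paper's: the splitting $p_\tau=a_\tau+b_\tau$, the treatment of $b_\tau$ via \cref{b tau}, and the treatment of the remainder $E(\tau,H)$ via \eqref{error part} are exactly what the paper does. You also correctly isolate the heart of the matter, namely the integrals $\mathcal{I}_k$ (the paper's $I_{1,k}$) and the light-cone singularity at $k=0$, and you correctly attribute its resolution to the simple zero of $C_{\sigma,d}$ at $\sigma=\frac{d+1}{2}$.

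The gap is in your treatment of $\mathcal{I}_k$. Your integration-by-parts scheme based on $2\tau(|H|^2+\tau^2)^{\alpha-1}=\frac{d}{ds}\bigl[\alpha^{-1}(|H|^2+\tau^2)^{\alpha}\bigr]$ does \emph{not} iterate: after one step the integrand becomes $s^{\sigma-2}(|H|^2+\tau^2)^{k-\frac{d-1}{2}}$, which no longer carries the factor $\tau$, so the identity cannot be reapplied. Worse, at the light cone $|H|=|t|$ one has $|H|^2+\tau^2=s(s-2it)$, and the new integrand behaves like $s^{\sigma-2}(s|t|)^{k-\frac{d-1}{2}}$ near $s=0$, which has exactly the same convergence threshold $\re\sigma>\frac{d+1}{2}-k$ as before. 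So nothing is gained, and your ``continue analytically'' remains a wish rather than an argument. The paper instead writes $\tau=s-it$ to split $I_{1,k}=P_1+P_2$, handles $P_1$ and the cases $k\ge1$ of $P_2$ directly via the three elementary lower bounds $\bigl|s^2+|H|^2-t^2-2sti\bigr|\ge s^2,\,s|t|,\,\bigl||H|^2-t^2\bigr|$, and for the critical piece ($k=0$ in $P_2$) performs a \emph{single} integration by parts based on $s^{i\,\im\sigma-1}=\frac{1}{i\,\im\sigma}\frac{d}{ds}s^{i\,\im\sigma}$. The resulting factor $(\im\sigma)^{-1}$ is exactly cancelled by the zero of $C_{\sigma,d}$ at $\sigma=\frac{d+1}{2}$ (recall \eqref{constant C sigma d}), and the remaining integral is then bounded by a careful but elementary case analysis using the three inequalities above.

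A smaller issue: in part (ii) your exponential-splitting argument only delivers $e^{-\langle\rho,x^+\rangle/2}$ in the end, not the full $e^{-\langle\rho,x^+\rangle}$, so the stated bound does not follow. The paper simply uses $|t|\lesssim|x^+|$ to convert the positive power $|t|^{3(\frac{d}{2}+1)}\log(2+|t|)$ into $(1+|x^+|)^{N_2-\frac{\max\{d,D\}-\ell}{2}}$ and a negative power $|t|^{-N_1}$, leaving the exponential factor untouched.
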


\begin{proof}
Recall that we are looking for a pointwise estimate of
the kernel
\begin{align*}
    \widetilde{\omega}_{t}^{\sigma,0}(x)
    = C_{\sigma,d}\,\int_{0}^{1}ds\,
        s^{\sigma-1}\,p_{\tau}(x),
\end{align*}
where $\tau=s-it$ with $s\in(0,1]$ and $t\in\mathbb{R}^{*}$.
According to the Cartan decomposition, for every
$x\in\mathbb{X}$, there exist $k_{1},k_{2} \in K$ and
$x^{+}\in\overline{\mathfrak{a}^{+}}$ such that
$x=k_{1}(\exp{x^{+}})k_{2}$. Then
\begin{align*}
    p_{\tau}(x) 
    = a_{\tau}(\exp x^{+}) + b_{\tau}(\exp x^{+})
\end{align*}
by the $K$-bi-invariance. According to the expansion
\eqref{a tau expansion}, we split up
\begin{align*}
    \widetilde{\omega}_{t}^{\sigma,0}(x)
    &=I_{1}(t,x^{+})+I_{2}(t,x^{+})+I_{3}(t,x^{+})\\[5pt]
    &=\frac{1}{\pi}\,J(x^{+})^{-1/2}\,
        \sum_{k=0}^{[d/2]} 4^{-k}\,U_{k}(x^{+})\,
        {\textstyle\Gamma\big(\frac{d+1}{2}-k\big)}\, 
        I_{1,k}(t,x^{+})\\[5pt]
    &+ C_{\sigma,d}\,\int_{0}^{1}ds\,s^{\sigma-1}     
        E(\tau,x^{+})
        + C_{\sigma,d}\,\int_{0}^{1}ds\,s^{\sigma-1} 
        b_{\tau}(\exp x^{+})
\end{align*}
where
\begin{align*}
    I_{1,k}(t,x^{+})
    = C_{\sigma,d}\,\int_{0}^{1}ds\,s^{\sigma-1}\, 
        \tau(|x^{+}|^2+\tau^2)^{k-\frac{d+1}{2}}
\end{align*}
satisfies
\begin{align*}
    |I_{1,k}(t,x^{+})|
    \lesssim 1+|t|^{k-\frac{d-1}{2}}
\end{align*}
according to next lemma. Hence
\begin{align}\label{estimate I1}
    I_{1}(t,x^{+})\lesssim
    (\sqrt{|t|}+|t|^{-\frac{d-1}{2}})\,J(x^{+})^{-1/2}
    \quad\forall\,t\in\mathbb{R}^{*}.
\end{align}
The last two terms $I_{2}(t,x^{+})$ and $I_{3}(t,x^{+})$ are easily
handled: on the one hand, we have
\begin{equation}\label{estimate I2}\begin{aligned}
    |I_{2}(t,x^{+})|
    &\lesssim \int_{0}^{1}ds\,s^{\re\sigma-1}\,|E(\tau,x^{+})|\\
    &\lesssim (1+|t|)^{3(\frac{d}{2}+1)}\,\log{(2+|t|)}\,
    e^{-\langle\rho,x^{+}\rangle},
\end{aligned}\end{equation}
according to \eqref{error part}; 
on the other hand, \eqref{b tau estimate} yields
\begin{align}\label{estimate I3}
    |I_{3}(t,x^{+})|
    \lesssim \int_{0}^{1}ds\,s^{\re\sigma-1}\,
        |b_{\tau}(\exp x^{+})|
    \lesssim (1+|t|)^{-N}\,\varphi_{0}(\exp x^{+})
\end{align}
for all $\sigma\in\mathbb{C}$ with $\re\sigma=\frac{d+1}{2}$.
By summing up the estimates \eqref{estimate I1},
\eqref{estimate I2} and \eqref{estimate I3}, 
we deduce, on the one hand, 
\begin{align*}
    |\widetilde{\omega}_{t}^{\sigma,0}(x)|
    \lesssim
        |t|^{-\frac{d-1}{2}}\, 
        (1+|x^{+}|)^{\frac{\max\lbrace d,D\rbrace-\ell}{2}}\, e^{-\langle\rho,x^{+}\rangle}
\end{align*}
if $|t|<1$, and on the other hand,
\begin{align*}
    |\widetilde{\omega}_{t}^{\sigma,0}(x)| 
    \lesssim |t|^{3(\frac{d}{2}+1)}\,\log{(2+|t|)}\,
    (1+|x^{+}|)^{\frac{\max\lbrace d,D\rbrace-\ell}{2}}\,
    e^{-\langle\rho,x^{+}\rangle}
\end{align*}
if $|t|\ge1$. 
Since $\frac{|x|}{|t|}$ is bounded from below, 
we obtain finally
\begin{align*}
    |\widetilde{\omega}_{t}^{\sigma,0}(x)| 
    \lesssim |t|^{-N_1}\,(1+|x^{+}|)^{N_2}\,
        e^{-\langle\rho,x^{+}\rangle}
    \quad \forall\,|t|\ge1
\end{align*}
for every $N_{1}\in\mathbb{N}$ and  
$N_2\ge{N}_{1}+2(d+1)+ \frac{\max\lbrace d,D\rbrace-\ell}{2}$.
\end{proof}

\begin{remark}
Notice that the above method works only in small time, 
or in large time under the assumption that
$\frac{|x|}{|t|}$ is bounded from below. 
The large polynomial growth in $|x^{+}|$ appearing in the estimate 
is not crucial for further computations because of the exponential
decay $e^{-\langle\rho,x^{+}\rangle}$.
\end{remark}

\begin{lemma}
For every integer $0\le{k}<\frac{d+1}{2}$, the integral
\begin{align*}
    I_{1,k}(t,x^{+})
    = C_{\sigma,d}\,\int_{0}^{1}ds\,s^{\sigma-1}\, 
        \tau(|x^{+}|^2+\tau^2)^{k-\frac{d+1}{2}}
\end{align*}
satisfies
\begin{align*}
    |I_{1,k}(t,x^{+})|
    \lesssim 1+|t|^{k-\frac{d-1}{2}}
    \quad\forall\,t\in\mathbb{R}^{*},\
    \forall\,x\in\frakACL
\end{align*}
uniformly in $\sigma\in\mathbb{C}$ with $\re\sigma=\frac{d+1}{2}$.
\end{lemma}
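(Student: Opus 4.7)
The plan is to estimate the integrand pointwise via the factorisation $r^2+\tau^2=(\tau-ir)(\tau+ir)$ and reduce to one-dimensional integrals in $s$, handling separately the main range $k\ge 1$ and the more delicate endpoint $k=0$. Write $\nu=\tfrac{d+1}{2}-k>0$, $\tau=s-it$, and assume WLOG $t>0$; then $|\tau\mp ir|=\sqrt{s^2+(t\pm r)^2}$ and, combined with $|\tau|\le|\tau-ir|$, this yields
\[
|\tau|\,|r^2+\tau^2|^{-\nu}\le\bigl(s^2+(t+r)^2\bigr)^{(1-\nu)/2}\bigl(s^2+(r-t)^2\bigr)^{-\nu/2}.
\]

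For $k\ge 1$ (so $\nu\ge 1$), the first factor is $\le(t+r)^{1-\nu}\le|t|^{1-\nu}$ and the second is $\le s^{-\nu}$; integrating against $s^{(d-1)/2}\,ds$ on $[0,1]$ gives $|t|^{1-\nu}\int_0^1 s^{k-1}\,ds=|t|^{1-\nu}/k$. Combined with the uniform bound $|C_{\sigma,d}|=O(1)$ from \eqref{constant C sigma d}, this shows $|I_{1,k}(t,x^{+})|\lesssim |t|^{k-(d-1)/2}$, and the claimed bound $\lesssim 1+|t|^{k-(d-1)/2}$ follows since $|t|^{k-(d-1)/2}\le 1$ when $|t|\ge 1$. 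The exceptional subcase $\nu<1$ (which arises only when $d$ is even and $k=d/2$) is handled with the sharper pointwise bound $|r^2+\tau^2|^2=(r^2+s^2+t^2)^2-4r^2t^2\ge(s^2+t^2)^2/2$, valid whenever $r\le\sqrt{s^2+t^2}/2$, plus routine estimates in the complementary region.

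The delicate case is $k=0$, so $\nu=\tfrac{d+1}{2}$, where the pointwise bound above produces a non-integrable $s^{-1}$ near the sphere $r=|t|$. This will be resolved by exploiting the vanishing factor $|C_{\sigma,d}|\lesssim|\im\sigma|$ on the line $\re\sigma=\tfrac{d+1}{2}$ (from the simple zero of $1/\Gamma(\tfrac{d+1}{2}-\sigma)$ at $\sigma=\tfrac{d+1}{2}$ recorded in \eqref{constant C sigma d}), which precisely compensates the $O(|\im\sigma|^{-1})$ blow-up of the integral. Concretely, at $r=|t|$ one has $r^2+\tau^2=s(s-2it)$, so the integrand reads $s^{\sigma-1-\nu}g(s)$ with $g$ smooth and $|g(0)|\lesssim|t|^{1-\nu}$; since $\re(\sigma-1-\nu)=-1$, the analytic continuation from $\re\sigma>\nu$ of $\int_0^1 s^{\sigma-1-\nu}\,ds=(\sigma-\nu)^{-1}$ is of size $O(|\im\sigma|^{-1})$, and the product with $|C_{\sigma,d}|$ is uniformly $O(|t|^{1-\nu})=O(|t|^{k-(d-1)/2})$. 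A continuity argument in $r$ extends this estimate to a neighbourhood of $|t|$. This pole-zero cancellation in the endpoint $k=0$ is the main obstacle.
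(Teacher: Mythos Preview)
Your treatment of the case $k\ge 1$ (with $\nu\ge 1$) via the factorisation $r^2+\tau^2=(\tau-ir)(\tau+ir)$ and the bound $|\tau|\le|\tau-ir|$ is correct and close to the paper's argument; it even streamlines things slightly by absorbing both the $s$- and $it$-parts of $\tau$ at once, whereas the paper splits $I_{1,k}=P_1+P_2$ and uses the three elementary lower bounds $|r^2+\tau^2|\ge s^2,\ s|t|,\ |r^2-t^2|$.

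The gap is in the endpoint $k=0$. You correctly identify the mechanism---the simple zero of $C_{\sigma,d}$ on the line $\re\sigma=\tfrac{d+1}{2}$ should cancel an $O(|\im\sigma|^{-1})$ divergence---but the argument you sketch does not yield a bound that is \emph{uniform in $r=|x^+|$}. At $r=|t|$ the integral diverges, so the ``analytic continuation from $\re\sigma>\nu$'' needs to be justified and shown to agree with the convergent integral for $r\ne|t|$; and the sentence ``a continuity argument in $r$ extends this estimate to a neighbourhood of $|t|$'' is not a proof, since continuity at a single point says nothing quantitative about nearby $r$. In fact your own pointwise inequality gives, for $r\ne|t|$, only
\[
|I_{1,0}|/|C_{\sigma,d}|\ \lesssim\ |t|^{-(d-1)/2}\Bigl(1+\log\tfrac{1}{|r-t|}\Bigr)
\]
after integration in $s$, which blows up as $r\to|t|$ and is not cured by multiplying back the bounded factor $|C_{\sigma,d}|$.

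The paper's remedy is an explicit integration by parts in $s$. For the $-it$ part of $\tau$ one writes
\[
s^{\sigma-1}(r^2+\tau^2)^{-\nu}
= s^{i\im\sigma-1}\Bigl(\tfrac{s}{r^2+\tau^2}\Bigr)^{\nu}
\]
and integrates the factor $s^{i\im\sigma-1}$, producing $\tfrac{1}{i\im\sigma}\,s^{i\im\sigma}$. The boundary term at $s=0$ vanishes for every $r\ne|t|$, and the prefactor $C_{\sigma,d}/(i\im\sigma)$ is uniformly bounded by \eqref{constant C sigma d}. One is then reduced to showing that
\[
|t|\int_0^1\frac{\bigl||x^+|^2-t^2-s^2\bigr|}{\bigl|r^2+\tau^2\bigr|^2}\,ds\ =\ O(1)
\]
uniformly in $r$, which the paper establishes by splitting according to the size of $|r^2-t^2|$ relative to $s|t|$ and using all three lower bounds on $|r^2+\tau^2|$. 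This integration by parts is the missing quantitative step in your plan.
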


\begin{proof}
Since $\tau=s-it$, we write $I_{1,k}(t,x^{+})=P_{1}+P_{2}$ with
\begin{align*}
    P_{1}=
    C_{\sigma,d}\,\int_{0}^{1}ds\,s^{\sigma}\, 
        (s^2+|x^{+}|^2-t^2-2sti)^{k-\frac{d+1}{2}}
\end{align*}
and
\begin{align*}
    P_{2}=
    C_{\sigma,d}\,(-it)\,\int_{0}^{1}ds\,s^{\sigma-1}\, 
        (s^2+|x^{+}|^2-t^2-2sti)^{k-\frac{d+1}{2}}.
\end{align*}
As
\begin{align*}
\big|s^2+|x^{+}|^2-t^2-2sti\big|
=\sqrt{s^4+2s^2(|x^{+}|^2+t^2)+(|x^{+}|^2-t^2)^2},
\end{align*}
notice that
\begin{numcases}
{\big|s^2+|x^{+}|^2-t^2-2sti\big|\ge}
    s^2 \label{lemma inequality s^2}
    \\[5pt]
    s|t| \label{lemma inequality st}
    \\[5pt]
    \big||x^{+}|^2-t^2\big| \label{lemma inequality x-t}
\end{numcases}

$P_{1}$ is easily estimated. 
By using \eqref{lemma inequality st}, we obtain first
\begin{align*}
    |P_{1}|
    \lesssim
    |t|^{k-\frac{d+1}{2}}\,\int_{0}^{1}ds\,s^{k}
    \le
    |t|^{k-\frac{d+1}{2}}
    \quad\forall\,t\in\mathbb{R}^{*}.
\end{align*}
By using in addition \eqref{lemma inequality s^2},
we obtain next, for $|t|<1$,
\begin{align*}
    |P_{1}|
    \lesssim
    |t|^{k-\frac{d+1}{2}}\,\int_{0}^{|t|}ds\,s^{k}
    +\int_{|t|}^{1}\,ds\,s^{2k-\frac{d+1}{2}}
    \lesssim 1+|t|^{2k-\frac{d-1}{2}}
\end{align*}
We deduce that 
\begin{align}\label{estimate of P1}
    |P_{1}|\lesssim
    \begin{cases}
        1+|t|^{2k-\frac{d-1}{2}}
        &\quad\textnormal{if \,}|t|<1,\\[5pt]
        |t|^{k-\frac{d+1}{2}}
        &\quad\textnormal{if \,}|t|\ge1.
    \end{cases}
\end{align}
\vspace{10pt}

Let us turn to $P_2$. Consider first the easy case where
$1\le{k}<\frac{d+1}{2}$. By using \eqref{lemma inequality st}
again, we get
\begin{align}\label{estimate of P2 k>0}
    |P_{2}|\lesssim
    |t|\cdot|t|^{k-\frac{d+1}{2}}\,
    \int_{0}^{1}ds\,s^{\re\sigma-1+k-\frac{d+1}{2}}
    \lesssim |t|^{k-\frac{d-1}{2}}
\end{align}
for all $\sigma\in\mathbb{C}$ with $\re\sigma=\frac{d+1}{2}$.
In order to estimate $P_2$ in the remaining case where $k=0$, 
we write
\begin{align*}
    P_{2}=
    C_{\sigma,d}\,(-it)\,\int_{0}^{1}ds\,s^{i\im\sigma-1}\,
    {\textstyle
    \big(\frac{s}{s^2+|x^{+}|^2-t^2-2sti}\big)^{\frac{d+1}{2}}}.
\end{align*}
By performing an integration by parts, $P_{2}$ becomes the sum of 
$P_{2}^{-}$ and $P_{2}^{+}$ where
\begin{align*}
\textstyle
    P_{2}^{-}=
    \big[
        \frac{C_{\sigma,d}}{\im\sigma}\,(-t)\,s^{i\im\sigma}
        \big(\frac{s}{s^2+|x^{+}|^2-t^2-2sti}\big)^{\frac{d+1}{2}}
    \big]_{0}^{1}
\end{align*}
and
\begin{align*}
    P_{2}^{+}=
    {\textstyle\frac{C_{\sigma,d}}{\im\sigma}}\,t\,
    \int_{0}^{1}ds\,s^{i\im\sigma}\,
    {\textstyle
    \frac{\partial}{\partial{s}}
    \big\lbrace
    \big(\frac{s}{s^2+|x^{+}|^2-t^2-2sti}\big)^{\frac{d+1}{2}}
    \big\rbrace}.
\end{align*}
By using \eqref{constant C sigma d} together with 
\eqref{lemma inequality s^2} in small time and 
\eqref{lemma inequality st} in large time, we obtain
\begin{align}\label{estimate of P2-}
    |P_{2}^{-}|\lesssim    
    \begin{cases}
        1
        &\quad\textnormal{if \,}|t|<1,\\[5pt]
        |t|^{-\frac{d-1}{2}}
        &\quad\textnormal{if \,}|t|\ge1.
    \end{cases}
\end{align}
Since
\begin{align*}
\textstyle
    \frac{\partial}{\partial{s}}
    \big\lbrace
    \big(\frac{s}{s^2+|x^{+}|^2-t^2-2sti}\big)^{\frac{d+1}{2}}
    \big\rbrace
    =
    \underbrace{\textstyle
    \frac{d+1}{2}
    \Big(\frac{s}{s^2+|x^{+}|^2-t^2-2sti}\Big)^{\frac{d-1}{2}}
    }_{O(|t|^{-\frac{d-1}{2}})}\,
    \frac{|x^{+}|^2-t^2-s^2}{(s^2+|x^{+}|^2-t^2-2sti)^2},
\end{align*}
we have
\begin{align}\label{estimate of P2+}
    |P_{2}^{+}|\lesssim
    |t|^{-\frac{d-1}{2}}
    \underbrace{
    |t|\int_{0}^{1}ds\,
    {\textstyle
    \frac{\big||x^{+}|^2-t^2-s^2\big|}
    {\big|s^2+|x^{+}|^2-t^2-2sti\big|^2}
    }}_{Q}.
\end{align}
It remains for us to estimate $Q$, which is bounded by 
the sum of
\begin{align*}
    Q_{1}
    = |t|\int_{0}^{1}ds\,
    {\textstyle \frac{s^2}
    {\big|s^2+|x^{+}|^2-t^2-2sti\big|^2}}
    \quad\textnormal{and}\quad
    Q_{2}
    = |t|\int_{0}^{1}ds\,
    {\textstyle \frac{\big||x^{+}|^2-t^2\big|}
    {\big|s^2+|x^{+}|^2-t^2-2sti\big|^2}}.
\end{align*}
$Q_{1}$ is estimated as $P_{1}$. 
According to \eqref{lemma inequality st} and 
\eqref{lemma inequality s^2}, we have
\begin{align*}
    Q_{1}\lesssim
    \begin{cases}
        |t|\,\int_{0}^{1}ds\,|t|^{-2}=|t|^{-1}\le1
        &\quad\textnormal{if \,}\,|t|\ge1,\\[5pt]
        |t|\,\int_{0}^{|t|}ds\,|t|^{-2}
        +|t|\,\int_{|t|}^{1}ds\,s^{-2}\le2
        &\quad\textnormal{if \,}\,|t|<1.
    \end{cases}
\end{align*}
Let us finally estimate $Q_2$.
On the one hand, if $\big||x^{+}|^2-t^2\big|\ge|t|$, 
by using \eqref{lemma inequality x-t}, we get
\begin{align*}
    |Q_{2}|\lesssim
    |t|\int_{0}^{1}ds\,
    \big||x^{+}|^2-t^2\big|^{-1}
    \lesssim1.
\end{align*}
On the other hand, if $\big||x^{+}|^2-t^2\big|\le|t|$,
we have $Q_{2}=\mathrm{O}(1)$ since
\begin{align*}
    \Big|
    |t|\int_{0\le{s}\le\frac{||x^{+}|^2-t^2|}{|t|}}ds\,
    {\textstyle \frac{\big||x^{+}|^2-t^2\big|}
    {\big|s^2+|x^{+}|^2-t^2-2sti\big|^2}}
    \Big|
    &\lesssim
    |t|\int_{0\le{s}\le\frac{||x^{+}|^2-t^2|}{|t|}}ds\,
    \big||x^{+}|^2-t^2\big|^{-1}\\
    &\le1
\end{align*}
according to \eqref{lemma inequality x-t}, and
\begin{align*}
    \Big|
    |t|\int_{\frac{||x^{+}|^2-t^2|}{|t|}\le{s}\le1}ds\,
    {\textstyle \frac{\big||x^{+}|^2-t^2\big|}
    {\big|s^2+|x^{+}|^2-t^2-2sti\big|^2}}
    \Big|
    &\lesssim
    {\textstyle
    \frac{||x^{+}|^2-t^2|}{|t|}}
    \int_{\frac{||x^{+}|^2-t^2|}{|t|}\le{s}\le1}ds\,s^{-2}\\
    &\le2
\end{align*}
according to \eqref{lemma inequality st}. 
Hence $Q=\mathrm{O}(1)$ and we deduce from \eqref{estimate of P2+} that
$|P_{2}^{+}|\lesssim|t|^{-\frac{d-1}{2}}$ for all $t\in\mathbb{R}^{*}$.
By combining with \eqref{estimate of P2-} and \eqref{estimate of P2 k>0},
we obtain
\begin{align*}
    |P_{2}|\lesssim |t|^{k-\frac{d-1}{2}}
    \quad\forall\,t\in\mathbb{R}^{*}.
\end{align*}
Together with \eqref{estimate of P1},
this concludes the proof.
\end{proof}

\subsection{Estimates of
$\omega_{t}^{\sigma,\infty}$}\label{subsection 3}

We establish in this last subsection the pointwise 
estimates of $\omega_{t}^{\sigma,\infty}$. Recall that
\begin{align*}
    \omega_{t}^{\sigma,\infty}(x)
    = \frac{1}{\Gamma(\sigma)} \int_{1}^{+\infty}ds\,
        s^{\sigma-1}\,p_{s-it}(x)\,
    \quad\forall\,x\in\mathbb{X},\ 
    \forall\,t\in\mathbb{R}^{*}.
\end{align*}
According to the integral expression 
\eqref{integral expression of phi lambda} of 
the spherical function, we may write
\begin{align*}
    \omega_{t}^{\sigma,\infty}(x) 
    = \frac{C_{0}}{\Gamma(\sigma)}
        \int_{K}dk\,e^{\langle\rho,A(kx)\rangle}\,
        \int_{1}^{+\infty}ds\, 
        s^{\sigma-1}\,I(s,t,x),
\end{align*}
where, let us recall,
\begin{align*}
    I(s,t,x)
    = \int_{\mathfrak{a}}d\lambda\, 
        |\mathbf{c}(\lambda)|^{-2}\,
        e^{-s\sqrt{|\lambda|^2+|\rho|^2}}\,
        e^{it \psi_{t}(\lambda)}.
\end{align*}
We have considered this oscillatory integral in the case where $s\in(0,1)$.For $s\ge1$, the factor 
$e^{-s\sqrt{|\lambda|^2+|\rho|^2}}$ plays an important role.
On the one hand, for $\lambda$ close to the critical point 
of $\psi_{t}(\lambda)$, this factor produces an exponential
decay in $s$, which ensures the convergence of the integral
over $s\in(1,+\infty)$. For $\lambda$ away from the critical
point, it produces an exponential decay in $|\lambda|$, which
ensures the convergence of the integral over
$\lambda\in\mathfrak{a}$. Let us elaborate.

\begin{theorem}\label{theorem subsection 3}
The following estimate holds,
uniformly in the strip $0\le\re\sigma\le\frac{d+1}{2}$,
for all $t\in\mathbb{R}^{*}$ and $x\in\mathbb{X}$:
\begin{align}\label{theorem subsection 3-1}
    |\omega_{t}^{\sigma,\infty}(x)| 
    \lesssim \varphi_{0}(x).
\end{align}
Moreover, if $|t|\ge1$,
\begin{align}\label{theorem subsection 3-2}
     |\omega_{t}^{\sigma,\infty}(x)| 
     \lesssim |t|^{-\frac{D}{2}}\, 
     (1+|x|)^{\frac{D}{2}}\,\varphi_{0}(x).
\end{align}
\end{theorem}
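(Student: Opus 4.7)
My plan is to mirror the analysis of \cref{subsection 1} while systematically exploiting the additional Gaussian factor $e^{-s\sqrt{|\lambda|^2+|\rho|^2}}$, which for $s\ge 1$ is always strongly decaying. Using the integral representation \eqref{integral expression of phi lambda} I would write
\begin{align*}
\omega_{t}^{\sigma,\infty}(x)
=\frac{C_0}{\Gamma(\sigma)}\int_{K}dk\,e^{\langle\rho,A(kx)\rangle}\int_{1}^{+\infty}ds\,s^{\sigma-1}\,I(s,t,x),
\end{align*}
and exploit the identity $\int_{K}dk\,e^{\langle\rho,A(kx)\rangle}=\varphi_0(x)$. The key observation is that $\sqrt{|\lambda|^2+|\rho|^2}\ge(|\lambda|+|\rho|)/\sqrt{2}$, so the modulus of the Gaussian is bounded by $e^{-s|\rho|/\sqrt{2}}e^{-s|\lambda|/\sqrt{2}}$, producing exponential decay simultaneously in $s$ and in $|\lambda|$.

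For \eqref{theorem subsection 3-1}, I would rely on the elementary bound $|\varphi_\lambda(x)|\le\varphi_0(x)$. Applied to the Fourier inversion formula for $p_{s-it}(x)$, together with the polynomial bound on $|\mathbf{c}(\lambda)|^{-2}$ and the Gaussian decay in $|\lambda|$, this yields $|p_{s-it}(x)|\lesssim\varphi_0(x)\,e^{-s|\rho|/\sqrt{2}}$. The remaining integral $\int_{1}^{+\infty}s^{\re\sigma-1}e^{-s|\rho|/\sqrt{2}}ds/|\Gamma(\sigma)|$ is then uniformly bounded on the strip $0\le\re\sigma\le(d+1)/2$.

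For \eqref{theorem subsection 3-2} with $|t|\ge 1$, I would rerun the stationary phase argument of \cref{proposition I0- and I0+} on $I(s,t,x)$. Split $I=I^{-}+I^{+}$ via the radial cut-off $\chi_0^{\rho}$. For $I^{-}$, when $|x|/|t|\le C_\Sigma$ the phase $\psi_t$ has a unique nondegenerate critical point in $\supp\chi_0^{\rho}$, and the method detailed in \cref{Appendix A} gives $|I^{-}(s,t,x)|\lesssim|t|^{-D/2}(1+|x|)^{(D-\ell)/2}e^{-s|\rho|/\sqrt{2}}$; when $|x|/|t|$ is bounded below the phase has no critical point, and repeated integration by parts along $A(kx)/|A(kx)|$ produces arbitrary polynomial decay in $|t|$ times the same Gaussian factor. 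For $I^{+}$, I would apply the barycentric partition \eqref{smooth partition} and integrate by parts along $w.\Lambda_j$ as in \eqref{IBP}, using \cref{properties on Sj} to control the derivatives of $|\mathbf{c}(\lambda)|^{-2}$; the Gaussian renders absolute convergence trivial at each step. Combining the two contributions, $|I(s,t,x)|\lesssim|t|^{-D/2}(1+|x|)^{(D-\ell)/2}e^{-s|\rho|/\sqrt{2}}$ uniformly in $s\ge 1$, and the $s$-integration followed by the $K$-average yields \eqref{theorem subsection 3-2} since $(1+|x|)^{(D-\ell)/2}\le(1+|x|)^{D/2}$.

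The main obstacle is the transitional regime $|x|/|t|\asymp C_\Sigma$, where the critical point $\lambda_0$ of $\psi_t$ approaches the boundary of $\supp\chi_0^{\rho}$ and the two subcases above must be glued together without losing uniformity in $\sigma$ and $s$. I expect that the uniform exponential decay $e^{-s|\lambda|/\sqrt{2}}$ supplied by the Gaussian, combined with the compact support of the radial cut-off, allows the stationary phase constants to be taken uniformly across this transition, so that the whole argument goes through with essentially the same bookkeeping as in \cref{subsection 1}.
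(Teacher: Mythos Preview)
Your treatment of \eqref{theorem subsection 3-1} matches the paper's. For \eqref{theorem subsection 3-2} your scheme is sound in outline but works harder than necessary in two places, and contains one incorrect claim.

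First, when $|x|/|t|$ is bounded below, the paper does no oscillatory analysis at all: \eqref{theorem subsection 3-2} follows trivially from \eqref{theorem subsection 3-1} because $|t|^{-D/2}(1+|x|)^{D/2}\gtrsim 1$ in that regime. Your assertion that ``the phase has no critical point'' there is in fact wrong --- the location of the critical point is governed by $|A(kx)|/|t|$, not by $|x|/|t|$, and the former can be small for some $k\in K$ even when the latter is large --- so the proposed integration by parts along $A(kx)/|A(kx)|$ would not go through uniformly in $k$. Fortunately no repair is needed, since the case is already covered by \eqref{theorem subsection 3-1}.

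Second, for $I^{+}$ the paper explicitly dispenses with the barycentric decomposition. Because $s\ge 1$, the Gaussian $e^{-s\sqrt{|\lambda|^2+|\rho|^2}}$ forces absolute convergence no matter how poorly $|\mathbf{c}(\lambda)|^{-2}$ behaves under differentiation. One integrates by parts $N$ times via the Euler-type identity
\[
e^{it\psi_t(\lambda)}=\frac{1}{it}\,\widetilde{\psi}_\infty(\lambda)^{-1}\sum_{j=1}^{\ell}\lambda_j\,\partial_{\lambda_j}e^{it\psi_t(\lambda)},
\qquad
\widetilde{\psi}_\infty(\lambda)=\frac{|\lambda|^2}{\sqrt{|\lambda|^2+|\rho|^2}}+\Big\langle\frac{A}{t},\lambda\Big\rangle,
\]
bounds every $\lambda$-derivative of $|\mathbf{c}(\lambda)|^{-2}$ crudely by $O(|\lambda|^{d-\ell})$, and lets the Gaussian absorb the resulting polynomial growth. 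Your barycentric route would also work, but it is redundant here and costs extra bookkeeping (derivatives of the Gaussian produce powers of $s$, which must then be controlled). Once these two simplifications are in place, your ``transitional regime'' worry evaporates: only the region $|x|/|t|\le\frac12$ requires any stationary-phase input, and \cref{Appendix A-lemma} handles it uniformly.
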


\begin{proof}
The global estimate \eqref{theorem subsection 3-1} is 
obtained by a straightforward computation. On the one hand,
\begin{align*}
    \int_{|\lambda|\le1}d\lambda\, 
    |\mathbf{c}(\lambda)|^{-2}\, 
    e^{-s\sqrt{|\lambda|^2+|\rho|^2}}\,
    \le e^{-s|\rho|}
        \underbrace{
        \int_{|\lambda|\le1} d\lambda\, 
        |\lambda|^{D-\ell}}_{<+\infty}.
\end{align*}
On the other hand,
\begin{align*}
    \int_{|\lambda|\ge1} d\lambda\, 
    |\mathbf{c}(\lambda)|^{-2}\, 
    e^{-s\sqrt{|\lambda|^2+|\rho|^2}} 
    \le e^{-\frac{s}{2}|\rho|}
        \underbrace{
        \int_{|\lambda|\ge1} d\lambda\, 
        |\lambda|^{d-\ell}\,
        e^{-\frac{s}{2}|\lambda|}}_{<+\infty}.
\end{align*}
Hence
\begin{align}
    |\omega_{t}^{\sigma,\infty}(x)|
    \lesssim \varphi_{0}(x) 
        \underbrace{
        \int_{1}^{+\infty} ds\, s^{\re\sigma-1}\, 
        e^{-\frac{s}{2}|\rho|}}_{< +\infty}.
\end{align}

The estimate \eqref{theorem subsection 3-2} follows from
\eqref{theorem subsection 3-1} if $\frac{|x|}{|t|}$ is 
bounded from below. Let us prove it if $\frac{|x|}{|t|}$ is
bounded from above, let say by $\frac{1}{2}$. 
We study the oscillatory integral $I$ along the 
lines of \cref{subsection 1}. Let split up again
\begin{align*}
    I (s,t,x)
    = I^{-} (s,t,x) + I^{+} (s,t,x)
    = \int_{\mathfrak{a}} d\lambda\, 
        \chi_{0}^{\rho}(\lambda) \cdots
    \,+\, 
        \int_{\mathfrak{a}} d\lambda\,
        \chi_{\infty}^{\rho}(\lambda) \cdots
\end{align*}
according to cut-off functions $\chi_{0}^{\rho}$ and
$\chi_{\infty}^{\rho}=1-\chi_{0}^{\rho}$, which have been
defined in \cref{subsection 1}.
Recall that $\chi_{0}^{\rho}(\lambda)=1$ when
$|\lambda|\le|\rho|$ and vanishes if $|\lambda|\ge2|\rho|$.
\vspace{10pt}

On the one hand, $I^{-}$ is estimated by studying 
the oscillatory integral
\begin{align*}
    I^{-} (s,t,x)
    = \int_{\mathfrak{a}} d\lambda\, 
        \underbrace{
        \chi_{0}^{\rho}(\lambda)\, 
        |\mathbf{c}(\lambda)|^{-2}\, 
        e^{-s\sqrt{|\lambda|^2+|\rho|^2}}
        }_{a_{0}(s,\lambda)}
        e^{it \psi_{t}(\lambda)}
\end{align*}
where the amplitude $a_{0}$ is compactly supported 
for $|\lambda|\le2|\rho|$, and in this range, the phase $\psi_{t}$, 
defined by \eqref{phase}, has a single critical
point, which is nondegenerate and small if 
$\frac{|x|}{|t|}\le\frac{1}{2}$. 
According to \cref{Appendix A-lemma}, we obtain
\begin{align}\label{estimate I infinity -}
    |I^{-}(s,t,x)|
    \lesssim
    |t|^{-\frac{D}{2}}\,(1+|x|)^{\frac{D-\ell}{2}}\,
    e^{-\frac{|\rho|}{2}s}.
\end{align}
On the other hand, 
\begin{align*}
    I^{+} (s,t,x)
    = \int_{\mathfrak{a}} d\lambda\,
        \chi_{\infty}^{\rho}(\lambda)\,
        |\mathbf{c}(\lambda)|^{-2}\, 
        e^{-s\sqrt{|\lambda|^2+|\rho|^2}}\,
        e^{it \psi_{t}(\lambda)}
\end{align*}
is easily estimated with no barycentric decomposition.
Let
\begin{align*}
\textstyle
    \widetilde{\psi}_{\infty}(\lambda)
    = \frac{|\lambda|^2}{\sqrt{|\lambda|^2+|\rho|^2}}
        + \langle\frac{A(kx)}{t},\lambda\rangle 
    \quad\forall\,\lambda\in\supp{\chi}_{\infty}^{\rho}.
\end{align*}
Then $\widetilde{\psi}_{\infty}$ is a symbol of order $1$, 
and satisfies 
\begin{align*}
\textstyle
    |\widetilde{\psi}_{\infty}(\lambda)|
    =
    \underbrace{
        \frac{|\lambda|^2}{\sqrt{|\lambda|^2+|\rho|^2}}
    }_{\ge\frac{|\lambda|}{\sqrt{2}}}
    -
    \underbrace{
        \frac{\langle\lambda_{0},\lambda\rangle}
        {\sqrt{|\lambda_{0}|^2+|\rho|^2}}
    }_{\le\frac{|\lambda|}{\sqrt{3}}}
    \ge (\frac{1}{\sqrt{2}}-\frac{1}{\sqrt{3}})|\rho|>0
\end{align*}
on $\supp{\chi}_{\infty}^{\rho}$ according to 
\eqref{critical point A/t} and \eqref{critical point lambda 0}.
By performing $N$ integrations by parts based on
\begin{align*}
\textstyle 
    e^{it \psi_{t}(\lambda)}
    = \frac{1}{it}\, 
        \widetilde{\psi}_{\infty}(\lambda)^{-1}\,
        \sum_{j=1}^{\ell} \lambda_{j}\,
        \frac{\partial}{\partial\lambda_{j}}\,
        e^{it \psi_{t}(\lambda)},
\end{align*}
we write 
\begin{align*}
    I_{\infty}^{+} (s,t,x) 
    &=(it)^{-N}
        \int_{\mathfrak{a}} d\lambda\, 
        e^{it \psi_{t}(\lambda)}\\
    &\times
        {\textstyle\big\lbrace
        -\sum_{j=1}^{\ell}
        \frac{\partial}{\partial\lambda_{j}}\circ
        \frac{\lambda_{j}}{\widetilde{\psi}_{\infty}(\lambda)}
        \big\rbrace^{N}
        \big\lbrace 
        \chi_{\infty}^{\rho}(\lambda)\,
        |\mathbf{c}(\lambda)|^{-2}\, 
        e^{-s\sqrt{|\lambda|^2+|\rho|^2}}
        \big\rbrace}.
\end{align*}
If some derivatives hit $\chi_{\infty}^{\rho}(\lambda)$, 
the range of the above integral is reduced to a spherical 
shell where $|\lambda|\asymp|\rho|$, and
\begin{align*}
    I^{+} (s,t,x) \asymp |t|^{-N} e^{-s|\rho|}.
\end{align*}
Assume next that no derivative is applied to
$\chi_{\infty}^{\rho}$ and
\begin{itemize}[leftmargin=*]
    \item 
    $N_1$ derivatives are applied to the factors
    $\lambda_{j}/\widetilde{\psi}_{\infty}(\lambda)$, which are
    inhomogeneous symbols of order $0$, producing contributions
    which are $O(|\lambda|^{-N_1})$, 
    \vspace{5pt}
    \item 
    $N_2$ derivatives are applied to the factor
    $|\mathbf{c}(\lambda)|^{-2}$ which is not a symbol in
    general, producing a contribution which is
    $O(|\lambda|^{d-\ell})$, 
    \vspace{5pt}
    \item 
    $N_3$ derivatives are applied to the factor
    $e^{-s\sqrt{|\lambda|^2+|\rho|^2}}$, 
    producing a contribution which is 
    $O(s^{N_3} e^{-s\sqrt{|\lambda|^2+|\rho|^2}})$,
    \vspace{5pt}
\end{itemize}
with $N_1+N_2+N_3=N$. 
Then we get the upper bound
\begin{align*}
    |t|^{-N}\,s^{N_3} 
    \int_{|\lambda|>|\rho|} d\lambda\,
    |\lambda|^{d-\ell-N_1}\,
    e^{-s\sqrt{|\lambda|^2+|\rho|^2}},
\end{align*}
which yields
\begin{align*}
    |I^{+} (s,t,x)| 
    \lesssim |t|^{-N}\,s^{N}\,e^{-\frac{s}{2}|\rho|}
        \underbrace{
        \int_{|\lambda|>|\rho|}d\lambda\,
        |\lambda|^{d-\ell}\, 
        e^{-\frac{s}{2}|\lambda|}}_{< +\infty}.
\end{align*}
Together with \eqref{estimate I infinity -}, we obtain
\begin{align*}
    |I(s,t,x)| 
    \lesssim |t|^{-\frac{D}{2}}\, 
    (1+|x|)^{\frac{D-\ell}{2}}\, 
    s^{N}e^{-\frac{s}{2}|\rho|}.
\end{align*}
for all $s\ge1$ and for $N\ge\frac{D}{2}$.
We deduce
\begin{align*}
    |\omega_{t}^{\sigma,\infty}(x)| 
    \lesssim |t|^{-\frac{D}{2}}\, 
        (1+|x|)^{\frac{D-\ell}{2}}\,
        \varphi_{0}(x)\,
        \underbrace{
        \int_{1}^{+\infty}ds\, 
        s^{\re\sigma-1+N}\, 
        e^{-\frac{|\rho|}{2}s}}_{<+\infty}
\end{align*}
for all $x\in\mathbb{X}$ and $|t|\ge1$.
\end{proof}

\section{Dispersive estimates}
\label{section dispersive estimates}

In this section, we prove our second main result about the
$L^{q'}\rightarrow{L^{q}}$ estimates for the operator
$W_{t}^{\sigma}=(-\Delta)^{-\frac{\sigma}{2}}
e^{it\sqrt{-\Delta}}$. We introduce the following criterion
based on the Kunze-Stein phenomenon, which is crucial for 
the proof of dispersive estimates.
\begin{lemma}\label{Kunze-Stein}
Let $\kappa$ be a reasonable $K$-bi-invariant function on $G$.
Then
\begin{align*}
    \OpNorme{\,\cdot*\kappa}{q'}{q}
    \le \Big\lbrace \int_{G} dx\, 
        \varphi_{0}(x)\,
        |\kappa(x)|^{\frac{q}{2}}\, 
        \Big\rbrace^{\frac{2}{q}}
\end{align*}
for every $q\in[2,+\infty)$.
In the limit case $q=\infty$,
\begin{align*}
\textstyle
    \OpNorme{\,\cdot*\kappa}{1}{\infty}
    = \sup_{x\in{G}} |\kappa(x)|.
\end{align*}
\end{lemma}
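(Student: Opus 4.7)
My plan is as follows. The limit case $q=\infty$ is just Young's inequality: $|f*\kappa(x)|\le\|f\|_{L^1}\sup_{y\in G}|\kappa(y)|$ gives the upper bound, while testing on an approximate delta mass concentrated near a point where $|\kappa|$ is close to its supremum provides the matching lower bound, so the operator norm equals exactly $\sup|\kappa|$.

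For $q\in[2,+\infty)$, the strategy is complex interpolation between two endpoint estimates. At $q=2$, since $\kappa$ is $K$-bi-invariant, Plancherel yields $\|f*\kappa\|_{L^2}\le\|f\|_{L^2}\,\|\mathcal{H}\kappa\|_{L^{\infty}(\mathfrak{a})}$; combining the integral representation \eqref{integral expression of phi lambda} of $\varphi_{\lambda}$ with the pointwise majorisation $|\varphi_{\lambda}(x)|\le\varphi_{0}(x)$ recalled in \cref{subsection spherical fourier analysis} then gives $\|\mathcal{H}\kappa\|_{L^{\infty}}\le\int_{G}\varphi_{0}\,|\kappa|$, which is precisely the asserted inequality at $q=2$. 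At the opposite endpoint $q=\infty$, Young's inequality already provides $\|f*\kappa\|_{L^{\infty}}\le\|f\|_{L^{1}}\sup|\kappa|$, so both endpoints are in hand.

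I would then invoke Stein's complex interpolation theorem, applied to the analytic family of convolution operators $T_{z}f=f*\kappa_{z}$ with $\kappa_{z}(x)=|\kappa(x)|^{az+b}\operatorname{sgn}\kappa(x)$, choosing affine parameters $a,b$ so that $\re z=0$ recovers the $L^{2}\to L^{2}$ estimate against $\int\varphi_{0}|\kappa|$, $\re z=1$ recovers the $L^{1}\to L^{\infty}$ estimate against $\sup|\kappa|$, and the intermediate parameter corresponding to the desired exponent $q$ selects the power $q/2$ of $|\kappa|$ inside the $\varphi_{0}$-weighted integral. The three-lines lemma then delivers exactly $\bigl(\int_{G}\varphi_{0}\,|\kappa|^{q/2}\bigr)^{2/q}$ as the bound on the $L^{q'}\to L^{q}$ operator norm of $\cdot*\kappa$.

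The delicate point I expect to be the main obstacle is arranging the analytic family so that the interpolated inequality recovers precisely $\bigl(\int\varphi_{0}|\kappa|^{q/2}\bigr)^{2/q}$, rather than the cruder mixed product $\bigl(\int\varphi_{0}|\kappa|\bigr)^{2/q}(\sup|\kappa|)^{1-2/q}$ which naive Riesz--Thorin would output between the same two endpoints. This is an incarnation of the Kunze--Stein / Herz majorising principle on semisimple Lie groups, and verifying that the operator norms of $T_{z}$ on the vertical lines $\re z=0,1$ grow only at the admissible polynomial rate in $|\im z|$---the hypothesis needed to apply Stein's theorem---is routine, since $K$-bi-invariance is preserved under $z\mapsto\kappa_{z}$ and the phase factors $|\kappa|^{ia\im z}$ are unimodular.
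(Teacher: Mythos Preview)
Your approach is correct and is exactly the one the paper defers to: the lemma is not proved in the paper but referred to \cite{Cow1997} (Herz's criterion, $q=2$) and \cite[Theorem~4.2]{APV2011} ($q>2$), where the argument is precisely Stein interpolation of the analytic family $\kappa_{z}=|\kappa|^{\frac{q}{2}(1-z)}\,\kappa/|\kappa|$ between the Herz bound at $\operatorname{Re}z=0$ (giving $\int_{G}\varphi_{0}|\kappa_{iy}|=\int_{G}\varphi_{0}|\kappa|^{q/2}$) and the trivial $L^{1}\to L^{\infty}$ bound $\sup|\kappa_{1+iy}|=1$ at $\operatorname{Re}z=1$. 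Your last paragraph correctly anticipates the only subtlety, and since these endpoint bounds are independent of $\operatorname{Im}z$, the admissible-growth hypothesis of Stein's theorem is automatic.
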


\begin{remark}
This lemma has been proved in several contexts. 
For $q=2$, it is the so-called Herz's criterion, 
see for instance \cite{Cow1997}. 
For $q>2$, the proof carried out on Damek-Ricci spaces
\cite[Theorem 4.2]{APV2011} is adapted straightforwardly 
in the higher rank case. 
\end{remark}

\begin{theorem}[Small time dispersive estimate]
\label{small time dispersive estimate}
Let $d\ge3$ and $0<|t|<1$. Then
\begin{align*}
    \OpNorme{(-\Delta)^{-\frac{\sigma}{2}}
        e^{it\sqrt{-\Delta}}}{q'}{q}
    \lesssim |t|^{-(d-1)(\frac{1}{2}-\frac{1}{q})}
\end{align*}
for all $2<q<+\infty$ and
$\sigma\ge(d+1)(\frac{1}{2}-\frac{1}{q})$.
\end{theorem}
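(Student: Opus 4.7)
The plan is to deduce the dispersive estimate from \cref{main thm1} via Stein's analytic interpolation theorem, applied to the analytic family $\widetilde{W}_t^\sigma$ from \eqref{intro-wave operator} in the vertical strip $0\le\re\sigma\le\frac{d+1}{2}$, between $L^2\to L^2$ bounds on its left edge and $L^1\to L^\infty$ bounds on its right edge. The role of the normalising factor $C_{\sigma,d}=e^{\sigma^2}/[\Gamma(\frac{d+1}{2}-\sigma)\Gamma(\sigma)]$ is precisely to cancel the logarithmic singularity on the right edge and to supply Gaussian decay in $\im\sigma$, so that Stein interpolation applies in its standard form.

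On the left edge $\re\sigma=0$, writing $\sigma=i\tau$, the operator $e^{it\sqrt{-\Delta}}$ is unitary on $L^2(\mathbb{X})$ and $(-\Delta)^{-i\tau/2}$ has $L^2$-operator norm $1$ by spectral calculus; combined with the bound \eqref{constant C sigma d} on $C_{\sigma,d}$, this yields $\|\widetilde{W}_t^{i\tau}\|_{L^2\to L^2}\lesssim|C_{i\tau,d}|$, with Gaussian decay in $\tau$. On the right edge $\re\sigma=\frac{d+1}{2}$, \cref{main thm1} provides
\[
|\widetilde{\omega}_t^\sigma(x)|\,\lesssim\,(1+|x^+|)^{N}\,e^{-\langle\rho,x^+\rangle}\,|t|^{-\frac{d-1}{2}}\qquad(0<|t|<1),
\]
and since $(1+|x^+|)^{N}e^{-\langle\rho,x^+\rangle}$ is bounded on $\overline{\mathfrak{a}^+}$, the limit case $q=\infty$ of \cref{Kunze-Stein} gives $\|\widetilde{W}_t^{\sigma}\|_{L^1\to L^\infty}\lesssim|t|^{-\frac{d-1}{2}}$, again uniformly in $\im\sigma$ modulo polynomial factors absorbed by $e^{\sigma^2}$.

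Applying Stein interpolation on the strip, for $\theta\in[0,1]$ with $\frac{1}{q_\theta}=\frac{1-\theta}{2}$ and $\sigma_\theta=\theta\frac{d+1}{2}$, I obtain $\|\widetilde{W}_t^{\sigma_\theta}\|_{L^{q_\theta'}\to L^{q_\theta}}\lesssim|t|^{-\theta\frac{d-1}{2}}$. Choosing $\theta=1-\tfrac{2}{q}$ yields $\sigma_\theta=(d+1)(\tfrac12-\tfrac1q)$ and the exponent $(d-1)(\tfrac12-\tfrac1q)$, which is exactly the target. Since the real value $\sigma_0=(d+1)(\tfrac12-\tfrac1q)\in(0,\tfrac{d+1}{2})$ avoids both poles of $\Gamma$, the factor $C_{\sigma_0,d}$ is a finite nonzero constant and dividing through produces the claim for $W_t^{\sigma_0}$.

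It remains to extend the estimate from $\sigma=\sigma_0$ to all $\sigma\ge\sigma_0$. For this I would factor $W_t^\sigma=(-\Delta)^{-(\sigma-\sigma_0)/2}\circ W_t^{\sigma_0}$ and invoke the $L^q$-boundedness of the negative fractional powers $(-\Delta)^{-\alpha/2}$ for $\alpha>0$ on $\mathbb{X}$; this follows from the spectral gap $-\Delta\ge|\rho|^2>0$ by representing $(-\Delta)^{-\alpha/2}$ as a Mellin integral of the heat semigroup, whose $L^q\to L^q$ norm decays exponentially. The main technical point is to verify that the endpoint constants produced in the two edges of the strip grow at most polynomially in $\im\sigma$, so that they are killed by the Gaussian factor $e^{\sigma^2}$; this is guaranteed by the very design of $\widetilde{W}_t^\sigma$ and by the kernel estimates of \cref{section kernel estiamtes}, where all constants appearing in $\widetilde{\omega}_t^\sigma$ are tracked uniformly in the strip.
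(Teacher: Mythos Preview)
Your approach is correct and follows the same core idea as the paper's: Stein analytic interpolation on the strip $0\le\re\sigma\le\frac{d+1}{2}$ between the unitary $L^2\to L^2$ bound on the left edge and the $L^1\to L^\infty$ kernel bound from \cref{main thm1} on the right edge. The paper organises the argument slightly differently: it splits the kernel as $\omega_t^\sigma=\omega_t^{\sigma,0}+\omega_t^{\sigma,\infty}$, handles the $\infty$-part directly via \cref{Kunze-Stein} and the pointwise bound $|\omega_t^{\sigma,\infty}(x)|\lesssim\varphi_0(x)$ from \cref{theorem subsection 3} (no interpolation needed there), and applies Stein interpolation only to the family $\widetilde{W}_t^{\sigma,0}$ from \eqref{analytic family tilde W}, whose normalising factor is $C_{\sigma,d}=e^{\sigma^2}/[\Gamma(\tfrac{d+1}{2}-\sigma)\Gamma(\sigma)]$. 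Your route interpolates the full operator $\widetilde{W}_t^\sigma$ from \eqref{intro-wave operator} at once, which is marginally more streamlined since \cref{main thm1} already packages everything. One small slip: you refer to $C_{\sigma,d}$ throughout, but the normalisation in \eqref{intro-wave operator} is only $e^{\sigma^2}/\Gamma(\tfrac{d+1}{2}-\sigma)$ (no $\Gamma(\sigma)$ in the denominator); this does not affect the argument, since on both edges the factor still has Gaussian decay in $\im\sigma$ and at the interior real point $\sigma_0$ it is a nonzero constant. Your extension to $\sigma\ge\sigma_0$ via the spectral gap and $L^q$-boundedness of $(-\Delta)^{-\alpha/2}$ is correct; the paper leaves this step implicit.
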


\begin{proof}
We divide the proof into two parts, corresponding to the 
kernel decomposition $\omega_{t}^{\sigma} =
\omega_{t}^{\sigma,0}+\omega_{t}^{\sigma, \infty}$. 
According to \cref{Kunze-Stein} and to the pointwise estimate
in \cref{theorem subsection 3}, we obtain on the one hand
\begin{align*}
    \OpNorme{\,\cdot * \omega_{t}^{\sigma,\infty}}{q'}{q}
        &\le\Big\lbrace 
        \int_{G} dx\, \varphi_{0}(x)\,
        |\omega_{t}^{\sigma,\infty}(x)|^{\frac{q}{2}} 
        \Big\rbrace^{\frac{2}{q}} \\[5pt]
        &\lesssim\Big\lbrace 
        \int_{\mathfrak{a}^{+}} dx^{+}\, 
        |\varphi_{0}(x^{+})|^{\frac{q}{2}+1}\,\delta(x^{+})
        \Big\rbrace^{\frac{2}{q}} \\[5pt]
        &\lesssim\Big\lbrace 
        \int_{\mathfrak{a}^{+}} dx^{+}\,
        (1+|x^{+}|)^{\frac{D-\ell}{2}(\frac{q}{2}+1)}\,
        e^{-(\frac{q}{2}-1)\langle\rho,x^{+}\rangle}
        \Big\rbrace^{\frac{2}{q}} \\
        &< +\infty\vphantom{\Big|}
\end{align*}
for all $q>2$. 
On the other hand, we use an analytic interpolation between the $L^{2}\rightarrow L^{2}$ and $L^{1}\rightarrow L^{\infty}$
estimates for the family of operators
$\widetilde{W}_{t}^{\sigma,0}$ defined by 
\eqref{analytic family tilde W} in the vertical strip
$0\le\re\sigma\le\frac{d+1}{2}$. 
When $\re\sigma=0$, the spectral theorem yields
\begin{align*}
    \OpNorme{\widetilde{W}_{t}^{\sigma,0}}{2}{2}
    = \OpNorme{e^{it \sqrt{- \Delta}}}{2}{2}    
    = 1
\end{align*}
for all $t \in \mathbb{R}^{*}$. 
According to \cref{theorem subsection 2}, 
when $\re\sigma=\frac{d+1}{2}$,
\begin{align*}
    \OpNorme{\widetilde{W}_{t}^{\sigma,0}}{1}{\infty}
    \lesssim \|\widetilde{\omega}_{t}^{\sigma,0}
                \|_{L^{\infty}(\mathbb{X})}
    \lesssim |t|^{-\frac{d-1}{2}}.
\end{align*}
By Stein's interpolation theorem applied to the 
analytic family of operators $\widetilde{W}_{t}^{\sigma,0}$, 
we conclude for $\sigma=(d+1)(\frac{1}{2}-\frac{1}{q})$ that
\begin{align*}
    \OpNorme{W_{t}^{\sigma}}{q'}{q}
    \lesssim |t|^{-(d-1)(\frac{1}{2}-\frac{1}{q})},
\end{align*}
for all $0<|t|<1$ and $2<q<+\infty$.
\end{proof}

\begin{theorem}[Large time dispersive estimate]
\label{large time dispersive estimate}
Assume that $|t|\ge1$, $2<q<+\infty$ and
$\sigma\ge(d+1)(\frac{1}{2}-\frac{1}{q})$. Then
\begin{align*}
    \OpNorme{(-\Delta)^{-\frac{\sigma}{2}}
        e^{it\sqrt{-\Delta}}}{q'}{q}
    \lesssim |t|^{-\frac{D}{2}}.
\end{align*}
\end{theorem}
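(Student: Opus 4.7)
The plan is to imitate the proof of Theorem~\ref{small time dispersive estimate}, decomposing $\omega_t^\sigma = \omega_t^{\sigma,0}+\omega_t^{\sigma,\infty}$ and treating the two pieces independently. The $\infty$-part will be handled directly via the Kunze--Stein criterion, whereas the $0$-part requires Stein's analytic interpolation, this time with the large-time endpoint bound supplied by Theorems~\ref{theorem subsection 1} and~\ref{theorem subsection 2}.

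For $\omega_t^{\sigma,\infty}$ I invoke Lemma~\ref{Kunze-Stein} together with the sharp large-time pointwise estimate \eqref{theorem subsection 3-2} of Theorem~\ref{theorem subsection 3}:
\begin{align*}
\|\cdot*\omega_t^{\sigma,\infty}\|_{L^{q'}\to L^q}
&\le \Bigl\{\int_G \varphi_0(x)\,|\omega_t^{\sigma,\infty}(x)|^{q/2}\,dx\Bigr\}^{2/q}\\
&\lesssim |t|^{-D/2}\,\Bigl\{\int_{\mathfrak{a}^+}(1+|x^+|)^{N_q}\,e^{-(q/2-1)\langle\rho,x^+\rangle}\,dx^+\Bigr\}^{2/q}
\lesssim |t|^{-D/2},
\end{align*}
the last integral being finite because $q>2$ makes the exponential weight dominate the polynomial factor coming from $\varphi_0^{q/2+1}\delta$.

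For the $0$-part I apply Stein's interpolation theorem to the analytic family $\widetilde{W}_t^{\sigma,0}$ of \eqref{analytic family tilde W} in the strip $0\le\re\sigma\le(d+1)/2$. At $\re\sigma=0$, the spectral theorem (together with the growth control \eqref{constant C sigma d} on $|C_{\sigma,d}|$) yields $\|\widetilde{W}_t^{\sigma,0}\|_{L^2\to L^2}\lesssim 1$ uniformly in $\im\sigma$. At $\re\sigma=(d+1)/2$, I combine Theorem~\ref{theorem subsection 1} (valid when $|x|/|t|\le C_\Sigma$) with case (ii) of Theorem~\ref{theorem subsection 2} (valid when $|x|/|t|>C_\Sigma$) to produce, for all $|t|\ge 1$ and $x\in\mathbb{X}$, a single uniform pointwise bound
\[
|\widetilde{\omega}_t^{\sigma,0}(x)|\lesssim |t|^{-D/2}\,(1+|x^+|)^{N}\,e^{-\langle\rho,x^+\rangle}.
\]
Since the exponential weight absorbs the polynomial in $x^+$, the supremum is $\lesssim |t|^{-D/2}$, so $\|\widetilde{W}_t^{\sigma,0}\|_{L^1\to L^\infty}\lesssim |t|^{-D/2}$. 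Stein's interpolation at $\sigma=(d+1)(\tfrac{1}{2}-\tfrac{1}{q})$ then delivers the desired $L^{q'}\to L^q$ estimate for $W_t^{\sigma,0}$, and summing with the $\infty$-part concludes the proof.

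The main obstacle is the patching step at the endpoint $\re\sigma=(d+1)/2$: Theorems~\ref{theorem subsection 1} and~\ref{theorem subsection 2} cover complementary $|x|/|t|$-regimes with estimates whose dependence on $x^+$ looks different at first sight, and I need to extract from them a single pointwise bound of order $|t|^{-D/2}$ with weight summable against the Plancherel density appearing in the Kunze--Stein/Herz criterion. In Theorem~\ref{theorem subsection 2}(ii) one can take $N_1$ arbitrarily large (in particular $N_1\ge D/2$), so the far regime contributes $|t|^{-D/2}$ times a polynomially weighted exponentially decaying factor; in the near regime the estimate of Theorem~\ref{theorem subsection 1} already carries the $|t|^{-D/2}$ factor with $\varphi_0$-type weight. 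Once those are merged, the interpolation argument is routine and the remaining verifications are identical to those in the small-time case.
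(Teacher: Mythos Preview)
Your argument contains a genuine gap at the interpolation step. Stein's interpolation between the $L^2\to L^2$ bound $O(1)$ at $\re\sigma=0$ and the $L^1\to L^\infty$ bound $O(|t|^{-D/2})$ at $\re\sigma=\frac{d+1}{2}$ yields, at the interpolation parameter $\theta=1-\frac{2}{q}$,
\[
\|\widetilde{W}_t^{\sigma,0}\|_{L^{q'}\to L^q}\ \lesssim\ \big(|t|^{-D/2}\big)^{\theta}\ =\ |t|^{-D(\frac{1}{2}-\frac{1}{q})},
\]
which for $q<\infty$ is strictly larger than the target $|t|^{-D/2}$. In the small-time proof this loss is exactly the desired exponent $(d-1)(\frac{1}{2}-\frac{1}{q})$, but for large time the claimed decay $|t|^{-D/2}$ is \emph{independent of $q$}, so it cannot arise from a single interpolation whose endpoint bound is only $|t|^{-D/2}$.

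The paper repairs this by splitting $\omega_t^{\sigma,0}$ spatially into $\mathds{1}_{B(0,C_\Sigma|t|)}\omega_t^{\sigma,0}$ and $\mathds{1}_{\mathbb{X}\setminus B(0,C_\Sigma|t|)}\omega_t^{\sigma,0}$. The point is that Theorem~\ref{theorem subsection 1} holds \emph{uniformly in the full strip} $0\le\re\sigma\le\frac{d+1}{2}$, so on the near piece one can apply Lemma~\ref{Kunze-Stein} directly at the target real $\sigma$ and obtain $|t|^{-D/2}$ without any interpolation loss. Theorem~\ref{theorem subsection 2}(ii), by contrast, is stated only on the line $\re\sigma=\frac{d+1}{2}$; interpolation is therefore needed on the far piece, but since one may choose $N_1$ arbitrarily large there, the interpolated bound $|t|^{-N_1(1-2/q)}$ can still be made $\le|t|^{-D/2}$. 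Merging the two regions before interpolating, as you do, throws away precisely this distinction and forces the endpoint bound down to the weaker common rate $|t|^{-D/2}$.
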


\begin{proof}
We divide the proof into three parts, corresponding to the
kernel decomposition
\begin{align*}
    \omega_{t}^{\sigma}
    = \mathds{1}_{B(0,C_{\Sigma}|t|)}\,\omega_{t}^{\sigma,0}
    + \mathds{1}_{\mathbb{X}\backslash{B(0,C_{\Sigma}|t|)}\,}
        \omega_{t}^{\sigma,0}
    + \omega_{t}^{\sigma,\infty}
\end{align*}
where the constant $C_{\Sigma}$ has been specified 
in the proof of \cref{theorem subsection 1}.
The first and the last terms are estimated by straightforward
computations. By combining \cref{Kunze-Stein} with the 
pointwise kernel estimates in \cref{theorem subsection 1} 
and \cref{theorem subsection 3}, we obtain indeed 
\begin{align*}
    &\OpNorme{\,\cdot*\lbrace\mathds{1}_{B(0,C_{\Sigma}|t|)}\,
        \omega_{t}^{\sigma,0}\rbrace}{q'}{q} 
        \hspace{-2cm}\\[5pt]
    &\lesssim\Big\lbrace 
        \int_{G}dx\,\varphi_{0}(x)\,
        |\mathds{1}_{B(0,C_{\Sigma}|t|)}(x)\,
        \omega_{t}^{\sigma,0}(x)|^{\frac{q}{2}} 
        \Big\rbrace^{\frac{2}{q}} \\[5pt]
    &\lesssim|t|^{-\frac{D}{2}}
        \underbrace{\Big\lbrace 
        \int_{|x^{+}|<C_{\Sigma}|t|}dx^{+}\,
        (1+|x^{+}|)^{\frac{D-\ell}{2}(q+1)}\,
        e^{-(\frac{q}{2}-1)\langle\rho,x^{+}\rangle}
        \Big\rbrace^{\frac{2}{q}}}_{<+\infty}
\end{align*}
and
\begin{align*}
    &\OpNorme{\,\cdot*\omega_{t}^{\sigma,\infty}}{q'}{q}
        \hspace{-2cm}\\[5pt]
    &\lesssim\Big\lbrace 
        \int_{G}dx\,\varphi_{0}(x)\,
        |\omega_{t}^{\sigma,\infty}(x)|^{\frac{q}{2}} 
        \Big\rbrace^{\frac{2}{q}} \\[5pt]
    &\lesssim|t|^{-\frac{D}{2}} 
        \underbrace{\Big\lbrace 
        \int_{\mathfrak{a}^{+}}dx^{+}\,
        (1+|x^{+}|)^{\frac{D-\ell}{2}
            + (D-\frac{\ell}{2})\frac{q}{2}}\,
        e^{-(\frac{q}{2}-1)\langle\rho,x^{+}\rangle}
        \Big\rbrace^{\frac{2}{q}}}_{<+\infty}.
\end{align*}
Here $2\le q<+\infty$ and the above estimates are uniform
in the strip $0\le\re\sigma\le\frac{d+1}{2}$.
As far as the middle term is concerned, we use again the analytic
interpolation for the family of operators associated with the
convolution kernel
$\mathds{1}_{\mathbb{X}\backslash{B(0,C_{\Sigma}|t|)}\,}
\widetilde{\omega}_{t}^{\sigma,0}$.
On the one hand, if $\re\sigma=0$, then
\begin{align*}
    &\OpNorme{\,\cdot*
    \mathds{1}_{\mathbb{X}\backslash{B(0,C_{\Sigma}|t|)}\,}
    \widetilde{\omega}_{t}^{\sigma,0}}{2}{2}\\
    &\le\,
    \OpNorme{\,\cdot*\widetilde{\omega}_{t}^{\sigma,0}}{2}{2}
    +
    \OpNorme{\,\cdot*
    \mathds{1}_{B(0,C_{\Sigma}|t|)\,}
    \widetilde{\omega}_{t}^{\sigma,0}}{2}{2}
    \lesssim\,1.
\end{align*}
On the other hand, if $\re\sigma=\frac{d+1}{2}$, we deduce 
from \cref{theorem subsection 2} that
\begin{align*}
    \OpNorme{\,\cdot*
    \mathds{1}_{\mathbb{X}\backslash{B(0,C_{\Sigma}|t|)}\,}
    \widetilde{\omega}_{t}^{\sigma,0}}{1}{\infty}
    &=\sup_{x\in\mathbb{X}}
    |\mathds{1}_{\mathbb{X}\backslash{B(0,C_{\Sigma}|t|)}}(x)\,
    \widetilde{\omega}_{t}^{\sigma,0}(x)|
    \lesssim |t|^{-N}
\end{align*}
for any $N\in\mathbb{N}$.
By using Stein's interpolation theorem between the above
$L^{2}\rightarrow L^{2}$ and $L^{1}\rightarrow L^{\infty}$
estimates, we obtain
\begin{align*}
    \OpNorme{\,\cdot*
    \mathds{1}_{\mathbb{X}\backslash{B(0,C_{\Sigma}|t|)}\,}
    \omega_{t}^{\sigma,0}}{q'}{q} 
    \lesssim |t|^{-N},
\end{align*}
for all $|t|\ge1$, $2<q<+\infty$ and for any $N\in\mathbb{N}$.
This concludes the proof.
\end{proof}

\begin{remark}
The standard $TT^{*}$ method used to prove the
Strichartz inequality breaks down in the critical case.
In order to take care of these endpoints, we need the 
dyadic decomposition method carried out in \cite{KeTa1998}
and the following stronger dispersive property, which is 
obtained by interpolation arguments.
\end{remark}

\begin{corollary}
Let $d\ge3$, $2 < q, \widetilde{q} < + \infty$ and 
$\sigma\ge(d+1)\,\max(\frac{1}{2}-\frac{1}{q},
\frac{1}{2}-\frac{1}{\widetilde{q}})$. Then there exists a
constant $C>0$ such that the following dispersive estimates
hold:
    \begin{align*}
    \OpNorme{(-\Delta)^{-\frac{\sigma}{2}}
        e^{it\sqrt{-\Delta}}}{\widetilde{q}'}{q}
    \le C
        \begin{cases}
            |t|^{-(d-1)\max(\frac{1}{2}-\frac{1}{q},     
            \frac{1}{2}-\frac{1}{\widetilde{q}})} 
            &\textnormal{if \,}0<|t|<1,\\[5pt]
            |t|^{-\frac{D}{2}} 
            &\textnormal{if \,}|t|\ge1.
        \end{cases}
    \end{align*}
\end{corollary}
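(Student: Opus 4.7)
\emph{Plan.} This corollary follows from the diagonal estimates (\cref{small time dispersive estimate,large time dispersive estimate}) by an interpolation argument. By the adjoint identity $(W_{t}^{\sigma})^{\ast}=W_{-t}^{\bar\sigma}$ and the symmetric role of $q$ and $\widetilde q$ in the conclusion, I may assume without loss of generality that $\widetilde q \le q$, in which case $(d+1)\max(\tfrac12-\tfrac1q,\tfrac12-\tfrac1{\widetilde q})=(d+1)(\tfrac12-\tfrac1q)$ and both claimed decays are governed by $q$. Since the $L^{2}$-spectrum of $-\Delta$ is bounded below by $|\rho|^{2}>0$, composition with $(-\Delta)^{-s/2}$ is harmless for any $s\ge 0$, so it suffices to treat the critical exponent $\sigma=\sigma_{q}:=(d+1)(\tfrac12-\tfrac1q)$. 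The diagonal case $\widetilde q=q$ is then exactly \cref{small time dispersive estimate,large time dispersive estimate}.

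For $\widetilde q<q$, my plan is to rerun the kernel decomposition $\omega_{t}^{\sigma}=\omega_{t}^{\sigma,0}+\omega_{t}^{\sigma,\infty}$ from \cref{section kernel estiamtes} (further split in large time, as in the proof of \cref{large time dispersive estimate}) and to treat each piece by one of two devices. A bilinear, mixed-exponent extension of the Kunze--Stein criterion of \cref{Kunze-Stein} (valid on noncompact symmetric spaces along the lines of the rank-one statement in \cite{APV2011}), combined with the sharp pointwise kernel bounds of \cref{section kernel estiamtes}, handles $\omega_{t}^{\sigma,\infty}$ and $\mathds{1}_{B(0,C_{\Sigma}|t|)}\omega_{t}^{\sigma,0}$ directly with the claimed decays. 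For the remaining analytic family $\widetilde W_{t}^{z,0}$ I apply Stein's analytic interpolation on the vertical strip $0\le\re z\le\tfrac{d+1}{2}$: at $\re z=0$ I use the spectral $L^{2}\to L^{2}$ bound, and at $\re z=\tfrac{d+1}{2}$ I use not only the $L^{1}\to L^{\infty}$ estimate but the stronger $L^{p'}\to L^{\infty}$ bound with any $p>2$, obtained from Young's inequality together with $\|\widetilde\omega_{t}^{z,0}\|_{L^{p}}\lesssim|t|^{-(d-1)/2}$, which follows from \cref{theorem subsection 1} since $\int_{G}e^{-p\langle\rho,x^{+}\rangle}\,dx<\infty$ for $p>2$. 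Choosing $p$ so that the Stein interpolant at height $\sigma_{q}$ lands exactly on the off-diagonal pair $(L^{\widetilde q'},L^{q})$ delivers the small-time decay $|t|^{-(d-1)(1/2-1/q)}$.

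The main obstacle will be the off-diagonal step itself: the three naturally available endpoints $(L^{2}\to L^{2})$, $(L^{q'}\to L^{q})$ and $(L^{\widetilde q'}\to L^{\widetilde q})$ all lie on the self-dual line $1/p+1/r=1$ in the Riesz diagram, so a single straight Riesz--Thorin interpolation among them can only reproduce diagonal estimates. Breaking this degeneracy requires either the extended right endpoint described above or the bilinear version of \cref{Kunze-Stein}, and the remaining bookkeeping must verify that the product of the interpolated decays matches exactly the claimed $|t|^{-(d-1)(\tfrac12-\tfrac1q)}$ in small time and $|t|^{-D/2}$ in large time, while keeping $\sigma$ pinned at $\sigma_{q}$ rather than overshooting it.
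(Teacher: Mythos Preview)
The paper itself omits the proof of this corollary, saying only that it ``is obtained by interpolation arguments'', so there is no detailed argument to compare against. Your plan is essentially correct and is presumably close to what the authors have in mind, but two points deserve attention.

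First, a citation slip: when you bound $\|\widetilde\omega_{t}^{z,0}\|_{L^{p}}\lesssim|t|^{-(d-1)/2}$ and then speak of the \emph{small-time} decay $|t|^{-(d-1)(1/2-1/q)}$, the relevant pointwise kernel estimate is \cref{theorem subsection 2}(i), not \cref{theorem subsection 1} (the latter concerns $|t|\ge1$ and gives decay $|t|^{-D/2}$). With the right reference, your Stein interpolation between $L^2\to L^2$ at $\re z=0$ and $L^{p'}\to L^\infty$ at $\re z=(d+1)/2$ is exactly the right off-diagonal device and, with $p$ chosen so that $\tfrac12-\tfrac1p=\tfrac{1/2-1/\widetilde q}{2(1/2-1/q)}$, lands on $(L^{\widetilde q'},L^{q})$ at $\sigma=\sigma_q$ with the claimed small-time exponent.

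Second, your appeal to a ``bilinear, mixed-exponent extension of the Kunze--Stein criterion'' for the pieces $\omega_{t}^{\sigma,\infty}$ and $\mathds{1}_{B(0,C_{\Sigma}|t|)}\omega_{t}^{\sigma,0}$ is not stated in the paper and would need to be formulated and proved. You can avoid it entirely. For small time, these pieces are also analytic in $\sigma$ with uniform kernel bounds on the strip (\cref{theorem subsection 3}), so the same Stein argument with the $L^{p'}\to L^\infty$ endpoint applies. For large time, there is a simpler route that bypasses any off-diagonal Kunze--Stein: factor
\[
W_t^{\sigma_q}=(-\Delta)^{-(\sigma_q-\sigma_{\widetilde q})/2}\circ W_t^{\sigma_{\widetilde q}},
\]
use the diagonal estimate $W_t^{\sigma_{\widetilde q}}:L^{\widetilde q'}\to L^{\widetilde q}$ with norm $\lesssim|t|^{-D/2}$ from \cref{large time dispersive estimate}, and then the Sobolev embedding $(-\Delta)^{-(\sigma_q-\sigma_{\widetilde q})/2}:L^{\widetilde q}\to L^{q}$, which holds since $\sigma_q-\sigma_{\widetilde q}=(d+1)(\tfrac{1}{\widetilde q}-\tfrac{1}{q})\ge d(\tfrac{1}{\widetilde q}-\tfrac{1}{q})$. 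This gives the full $|t|^{-D/2}$ decay with no loss. (Note this Sobolev route gives only $|t|^{-(d-1)(1/2-1/\widetilde q)}$ in small time, which is why your Stein argument is genuinely needed there.)
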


\section{Strichartz inequality and applications}
\label{section applications}

In this section, we use the dispersive properties proved in
the previous section to establish the Strichartz inequality.
This inequality serves as a tool for finding minimal
regularity conditions on the initial data ensuring
well-posedness of related semi-linear wave equations.
Such results were previously known to hold for real 
hyperbolic spaces \cite{AnPi2014} (actually for all noncompact
symmetric spaces of rank one) and for noncompact symmetric 
spaces $G/K$ with $G$ complex \cite{Zha2021}.
For simplicity, we may assume that $\ell\ge2$ throughout this
section, thus $d\ge4$.

Let $\sigma \in \mathbb{R}$ and $1<q<+\infty$. 
Recall that the Sobolev space $H^{\sigma,q}(\mathbb{X})$ is 
the image of $L^q(\mathbb{X})$ under the operator 
$(- \Delta)^{- \frac{\sigma}{2}}$, equipped with the norm
\begin{align*}
    \|f\|_{H^{\sigma,q}(\mathbb{X})} 
    = \|(-\Delta)^{\frac{\sigma}{2}}\,f\|_{L^q(\mathbb{X})}.
\end{align*}
If $\sigma=N$ is a nonnegative integer, then
$H^{\sigma,q}(\mathbb{X})$ coincides with the classical 
Sobolev space
\begin{align*}
    W^{N,q} (\mathbb{X}) 
    = \lbrace{f\in L^q(\mathbb{X})\,|\,
        \nabla^{j} f\in L^q(\mathbb{X})\,
        \forall\, 1 \le j \le N}\rbrace,
\end{align*}
defined by means of covariant derivatives. 
We refer to \cite{Tri1992} for more details about function
spaces on Riemannian manifolds. 
Let us state the Strichartz inequality and some applications.
The proofs are adapted straightforwardly from 
\cite{APV2012,AnPi2014} and are therefore omitted.

\subsection{Strichartz inequality}
\label{subsection Strichartz}

We study the linear inhomogeneous wave equation on $\mathbb{X}$
\begin{align}\label{Cauchy problem}
    \begin{cases}
        \partial_{t}^2\,u(t,x)-\Delta\,u(t,x)=F(t,x), \\[5pt]
    u(0,x) =f(x),\,\partial_{t}|_{t=0} u(t,x)=g(x)
    \end{cases}
\end{align}
whose solution is given by Duhamel's formula:
\begin{align*}
    u(t,x) 
    = (\cos{t\sqrt{-\Delta}}) f(x)
    + {\textstyle 
        \frac{\sin t\sqrt{-\Delta}}{\sqrt{-\Delta}}} 
        g(x)
    + \int_{0}^{t}ds\, 
        {\textstyle
        \frac{\sin(t-s)\sqrt{-\Delta}}{\sqrt{-\Delta}}}
        F(s,x).
\end{align*}
Recall that a couple $(p,q)$ is called admissible if
$(\frac{1}{p},\frac{1}{q})$ belongs to the triangle
\begin{align*}
\textstyle
    \Big\lbrace 
    \Big(\frac{1}{p},\frac{1}{q}\Big) 
    \in\Big(0,\frac{1}{2}\Big]\times\Big(0,\frac{1}{2}\Big) 
    \,\Big|\,
    \frac{1}{p}\ge\frac{d-1}{2} 
    \Big(\frac{1}{2}-\frac{1}{q}\Big)
    \Big\rbrace \bigcup 
    \Big\lbrace\Big(0,\frac{1}{2}\Big)\Big\rbrace.
\end{align*}

\begin{figure}[h]\label{n>4}
    \centering
    \begin{tikzpicture}[scale=0.47][line cap=round,line join=round,>=triangle 45,x=1.0cm,y=1.0cm]
    \draw[->,color=black] (0.,0.) -- (8.805004895364174,0.);
    \foreach \x in {,2.,4.,6.,8.,10.}
    \draw[shift={(\x,0)},color=black] (0pt,-2pt);
    \draw[color=black] (8,0.08808504628984362) node [anchor=south west] {\large $\frac{1}{p}$};
    \draw[->,color=black] (0.,0.) -- (0.,8);
    \foreach \y in {,2.,4.,6.,8.}
    \draw[shift={(0,\y)},color=black] (-2pt,0pt);
    \draw[color=black] (0.11010630786230378,8) node [anchor=west] {\large $\frac{1}{q}$};
    \clip(-5,-2) rectangle (11.805004895364174,9.404473957611293);
    \fill[line width=2.pt,color=red,fill=red,fill opacity=0.15000000596046448] (0.45855683542994374,5.928142080369191) -- (5.85410415683891,5.8847171522290775) -- (5.864960388873938,4.136863794589543) -- cycle;
    \draw [line width=1.pt,dash pattern=on 5pt off 5pt,color=red] (0.,6.)-- (6.,6.);
    \draw [line width=1.pt,color=red] (6.,6.)-- (6.,4.);
    \draw [line width=1.pt,color=red] (6.,4.)-- (0.,6.);
    \draw [line width=0.8pt] (6.,4.)-- (6.,0.);
    \draw [->,line width=0.5pt,color=red] (1.8569381602310457,-0.8701903243695805) -- (2.891937454136701,4.811295161325333);
    \begin{scriptsize}
    \draw [fill=red] (0.,6.) circle (5pt);
    \draw[color=black] (-0.5709441083587729,6) node {\large $\frac{1}{2}$};
    \draw[color=black] (-1.5,4) node {\large $\frac{1}{2}- \frac{1}{d-1}$};
    \draw[color=black] (-0.5,-0.2) node {\large $0$};
    \draw[color=black] (6.2,-0.7) node {\large $\frac{1}{2}$};
    \draw[color=red] (1.8,-1.3) node {\large $\frac{1}{p} = \frac{d-1}{2} \left( \frac{1}{2} - \frac{1}{q} \right)$};
    \draw [color=red] (6.,6.) circle (5pt);
    \draw [fill=red] (6.,4.) circle (5pt);
    \end{scriptsize}
    \end{tikzpicture}
    \caption{Admissibility in dimension $d \ge 4$.}
    \end{figure}
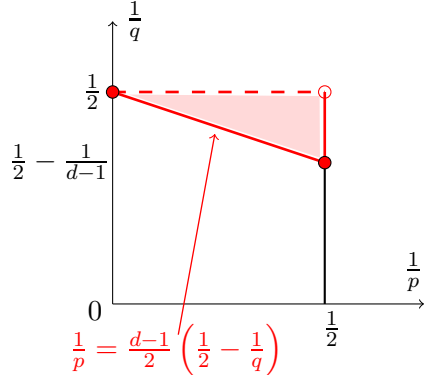

\begin{theorem}\label{thm strichartz}
Let $(p,q)$ and $(\tilde{p},\tilde{q})$ be two admissible
couples, and let
\begin{align*}
\textstyle
    \sigma\ge \frac{d+1}{2} 
    \left(\frac{1}{2}-\frac{1}{q} \right) 
    \quad\textnormal{and}\quad 
    \widetilde{\sigma}\ge \frac{d+1}{2} 
    \left(\frac{1}{2}-\frac{1}{\tilde{q}}\right).
\end{align*}
Then all solutions $u$ to the Cauchy problem 
\eqref{Cauchy problem} satisfy the following 
Stri\-chartz inequality:
\begin{align}\label{Strichartz}
    \|\nabla_{\mathbb{R}\times\mathbb{X}} u 
    \|_{L^p(I; H^{-\sigma,q}(\mathbb{X}))} 
    \lesssim 
        \|f\|_{H^1(\mathbb{X})} 
        + \|g\|_{L^2(\mathbb{X})} 
        + \|F\|_{L^{\tilde{p}'}
        (I;H^{\tilde{\sigma},\tilde{q}'}(\mathbb{X}))}.
\end{align}
\end{theorem}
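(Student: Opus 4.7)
First, via Duhamel's formula and the splitting $\cos(t\sqrt{-\Delta}) = \frac{1}{2}\bigl(e^{it\sqrt{-\Delta}} + e^{-it\sqrt{-\Delta}}\bigr)$, the plan is to reduce \eqref{Strichartz} to the homogeneous Strichartz estimate $\|W_{t}^{\sigma} f\|_{L^{p}(I;L^{q}(\mathbb{X}))} \lesssim \|f\|_{L^{2}(\mathbb{X})}$ for the half-wave propagator $W_{t}^{\sigma}=(-\Delta)^{-\sigma/2}e^{it\sqrt{-\Delta}}$ together with its dual inhomogeneous counterpart; the endpoint $(p,q)=(\infty,2)$ is furnished by the spectral theorem. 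Everything below then reproduces, in our higher-rank setting, the $TT^{\ast}$ scheme used in \cite{APV2012,AnPi2014}.

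Second, I would run the classical $TT^{\ast}$ argument: for a single admissible pair, the operator $W_{t}^{\sigma}(W_{s}^{\sigma})^{\ast}$ has $K$-bi-invariant convolution kernel proportional to $W_{t-s}^{2\sigma}$, and the dispersive corollary of Section 4, used with $2\sigma\ge(d+1)(\frac{1}{2}-\frac{1}{q})$, yields
\begin{align*}
\OpNorme{W_{t-s}^{2\sigma}}{q'}{q} \lesssim
\begin{cases} |t-s|^{-(d-1)(\frac{1}{2}-\frac{1}{q})} & \text{if }|t-s|<1, \\ |t-s|^{-D/2} & \text{if }|t-s|\ge 1.\end{cases}
\end{align*}
Convolving in time via Young's inequality (for strictly subcritical pairs $(p,q)$) or Hardy--Littlewood--Sobolev on $\mathbb{R}$ (for pairs on the Knapp line $\frac{1}{p}=\frac{d-1}{2}(\frac{1}{2}-\frac{1}{q})$) then gives the desired $L^{p}_{t}L^{q}_{x}$ bound. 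For the inhomogeneous part with two \emph{distinct} admissible pairs, the bilinear form involves the mixed kernel $W_{t-s}^{\sigma+\widetilde\sigma}$, whose dispersive estimate follows by interpolating the corollary with the trivial $L^{2}\to L^{2}$ bound; one then closes by the Christ--Kiselev lemma when $p>\widetilde p'$, and by a direct bilinear argument when $p=\widetilde p'$.

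Third, the Keel--Tao endpoint $(p,q)=(2,\frac{2(d-1)}{d-3})$ requires the bilinear dyadic argument of \cite{KeTa1998}: one splits $\{|t-s|\asymp 2^{j}\}$ and interpolates the dispersive bound with the trivial one on each dyadic scale, then sums over $j\in\mathbb{Z}$. The large-time decay $|t|^{-D/2}$ with $D=\ell+2|\Sigma_{r}^{+}|$ ensures rapid summability over $j\ge 0$, while the $j<0$ sum is governed by the standard small-time dispersive estimate; no new analytic ingredient is needed beyond those of \cite{KeTa1998,APV2012,AnPi2014}. The only genuine obstacle is the bookkeeping needed to propagate the admissibility condition together with the regularity thresholds $\sigma,\widetilde\sigma$ through the interpolation so that the sharp regularity $\sigma=\frac{d+1}{2}(\frac{1}{2}-\frac{1}{q})$ is attained; this is precisely the routine verification the authors defer to \cite{APV2012,AnPi2014}.
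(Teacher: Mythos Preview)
Your proposal is correct and matches the paper's intended approach: the paper explicitly omits the proof of this theorem, stating that it is ``adapted straightforwardly from \cite{APV2012,AnPi2014}'', and the Remark preceding the Corollary in \cref{section dispersive estimates} confirms that the Keel--Tao dyadic argument is needed for the endpoint, exactly as you outline. There is nothing to add.
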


\begin{remark}
As have already been observed on hyperbolic spaces, 
the admissible set for $\mathbb{X}$ is much larger than the
admissible set for $\mathbb{R}^{d}$, which corresponds only 
to the lower edge of the triangle. This is due to large scale
dispersive effects in negative curvature.
\end{remark}

The admissible range in \eqref{Strichartz} can be widened 
by using the Sobolev embedding theorem.
\begin{corollary}\label{corollary strichartz}
Assume that $(p,q)$ and $(\tilde{p},\tilde{q})$ are 
two couples corresponding to the square 
\begin{align*}
\textstyle
    \Big[0,\frac{1}{2}\Big] 
    \times \Big(0,\frac{1}{2}\Big)
    \bigcup\Big\lbrace\Big(0,\frac{1}{2}\Big)\Big\rbrace,
\end{align*}

\begin{figure}[h]
    \hspace{20pt}
    \centering
    \begin{tikzpicture}[scale=0.5][line cap=round,line join=round,>=triangle 45,x=1.0cm,y=1.0cm]
    \draw[->,color=black] (6.,0.) -- (8,0.);
    \foreach \x in {,1.,2.,3.,4.,5.,6.,7.,8.,9.,10.,11.}
    \draw[shift={(\x,0)},color=black] (0pt,-2pt);
    \draw[color=black] (8,0.1) node [anchor=south west] {\large $\frac{1}{p}$};
    \draw[->,color=black] (0.,6.) -- (0.,8);
    \foreach \y in {,1.,2.,3.,4.,5.,6.,7.}
    \draw[shift={(0,\y)},color=black] (-2pt,0pt);
    \draw[color=black] (0.1,8.1) node [anchor=west] {\large $\frac{1}{q}$};
    \clip(-4,-2) rectangle (11.868487651785372,7.652018714581741);
    \fill[line width=2.pt,color=red,fill=red,fill opacity=0.15000000596046448] (0.45855683542994374,5.928142080369191) -- (5.85410415683891,5.8847171522290775) -- (5.864960388873938,4.136863794589543) -- cycle;
    \fill[line width=2.pt,color=cyan,fill=cyan,fill opacity=0.10000000149011612] (0.13636385492209063,5.773698170331237) -- (5.837359246154291,3.8701454719028483) -- (5.876010062366035,0.10169089125781497) -- (0.16535196708089844,0.12101629936368694) -- cycle;
    \draw [line width=1.pt,dash pattern=on 3pt off 3pt,color=red] (0.,6.)-- (6.,6.);
    \draw [line width=1.pt,color=red] (6.,6.)-- (6.,4.);
    \draw [line width=1.pt,color=red] (6.,4.)-- (0.,6.);
    \draw [line width=1.pt,color=cyan] (0.,6.)-- (0.,0.);
    \draw [line width=1.pt,color=cyan] (6.,4.)-- (6.,0.);
    \draw [line width=1.pt,dash pattern=on 3pt off 3pt,color=cyan] (6.,0.)-- (0.,0.);
    \begin{scriptsize}
    \draw [fill=red] (0.,6.) circle (5pt);
    \draw[color=black] (-0.5709441083587729,6) node {\large $\frac{1}{2}$};
    \draw [color=red] (6.,6.) circle (5pt);
    \draw [fill=red] (6.,4.) circle (5pt);
    \draw [color=cyan] (0.,0.) circle (5pt);
    \draw[color=black] (-0.5,-0.2) node {\large $0$};
    \draw [color=cyan] (6.,0.) circle (5pt);
    \draw[color=black] (6,-1) node {\large $\frac{1}{2}$};
    \draw[color=black] (-1.5,4) node {\large $\frac{1}{2}- \frac{1}{d-1}$};
    \end{scriptsize}
    \end{tikzpicture}
    \vspace{-0.5cm}
    \caption{Extended admissibility in dimension $d\ge4$.}
    \end{figure}
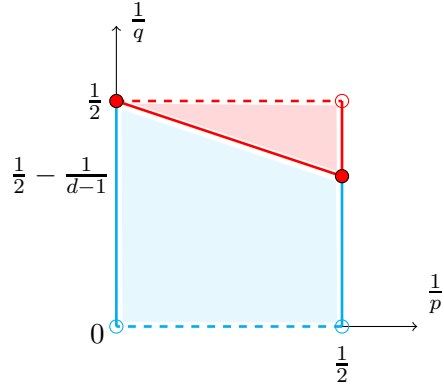

\noindent Let $\sigma, \tilde{\sigma} \in \mathbb{R}$ 
such that $\sigma \ge \sigma(p,q)$, where
\begin{align*}
\textstyle
    \sigma(p,q) 
    = \frac{d+1}{2} \Big(\frac{1}{2}-\frac{1}{q} \Big) 
    + \max\Big\lbrace0,\frac{d-1}{2} 
        \Big(\frac{1}{2}-\frac{1}{q}\Big)-\frac{1}{p} \Big\rbrace,
\end{align*}
and similarly 
$\tilde{\sigma} \ge \sigma(\tilde{p}, \tilde{q})$. 
Then the Strichartz inequality \eqref{Strichartz} holds for 
all solutions to the Cauchy problem \eqref{Cauchy problem}.
\end{corollary}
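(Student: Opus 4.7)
The plan is to reduce \cref{corollary strichartz} to \cref{thm strichartz} by combining it with Sobolev embedding on $\mathbb{X}$. First, observe that whenever $(1/p, 1/q)$ already lies inside the admissible triangle, i.e., $1/p \ge \frac{d-1}{2}(\tfrac{1}{2} - \tfrac{1}{q})$, the second term inside the $\max$ defining $\sigma(p,q)$ is nonpositive, so $\sigma(p,q) = \frac{d+1}{2}(\tfrac{1}{2} - \tfrac{1}{q})$ and the statement reduces immediately to \cref{thm strichartz}. It only remains to handle couples lying strictly below the slanted edge, inside the square.

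For such a couple, I would project $(1/p,1/q)$ vertically onto the admissible edge by replacing $q$ with $q^{*}$ defined through $\frac{1}{q^{*}} = \frac{1}{2} - \frac{2}{(d-1)p}$, so that $(p,q^{*})$ is admissible with $2\le q^{*}\le q$; define $\widetilde{q}^{*}$ analogously from $(\widetilde{p},\widetilde{q})$. Applying \cref{thm strichartz} to the admissible couples $(p,q^{*})$ and $(\widetilde{p},\widetilde{q}^{*})$ with the borderline exponents $\sigma_{0} = \frac{d+1}{2}(\tfrac{1}{2} - \tfrac{1}{q^{*}}) = \frac{d+1}{(d-1)p}$ and $\widetilde{\sigma}_{0}$ defined similarly, yields
\begin{align*}
    \|\nabla_{\mathbb{R}\times\mathbb{X}} u\|_{L^{p}(I;\,H^{-\sigma_{0},q^{*}}(\mathbb{X}))}
    \lesssim \|f\|_{H^{1}(\mathbb{X})} + \|g\|_{L^{2}(\mathbb{X})}
    + \|F\|_{L^{\widetilde{p}'}(I;\,H^{\widetilde{\sigma}_{0},\widetilde{q}^{*\prime}}(\mathbb{X}))}.
\end{align*}
The last step will be to upgrade the target norm and downgrade the source norm using the Sobolev embedding
\begin{align*}
    H^{s_{1},p_{1}}(\mathbb{X}) \hookrightarrow H^{s_{2},p_{2}}(\mathbb{X})
    \qquad\text{whenever}\quad
    2\le p_{1}\le p_{2}<\infty
    \ \text{and}\
    s_{1}-\tfrac{d}{p_{1}} \ge s_{2}-\tfrac{d}{p_{2}},
\end{align*}
which holds on noncompact symmetric spaces by their bounded geometry (see \cite{Tri1992}). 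A short algebraic verification based on the identity
$\sigma_{0} + d\bigl(\tfrac{1}{q^{*}} - \tfrac{1}{q}\bigr) = d\bigl(\tfrac{1}{2} - \tfrac{1}{q}\bigr) - \tfrac{1}{p} = \sigma(p,q)$
shows that the regularity condition coming from the embedding on the target side is exactly $\sigma\ge\sigma(p,q)$; the dual computation $\widetilde{\sigma}_{0} + d\bigl(\tfrac{1}{\widetilde{q}^{*}} - \tfrac{1}{\widetilde{q}}\bigr) = \sigma(\widetilde{p},\widetilde{q})$, applied to $H^{\widetilde{\sigma},\widetilde{q}'}\hookrightarrow H^{\widetilde{\sigma}_{0},\widetilde{q}^{*\prime}}$, gives the corresponding condition on the source side.

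The only substantive technical point is justifying the Sobolev embedding on the noncompact symmetric space $\mathbb{X}$: the exponential volume growth precludes a naive scaling argument, but $\mathbb{X}$ has bounded geometry in the sense of Triebel, so a uniformly locally finite atlas together with the Euclidean embedding yields the global statement \cite{Tri1992}. With the embedding in hand, the rest is purely algebraic, and parallels the hyperbolic and rank-one treatments in \cite{APV2012,AnPi2014}.
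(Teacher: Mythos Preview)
Your approach is correct and is exactly the one the paper has in mind: the paper explicitly omits the proof and refers to \cite{APV2012,AnPi2014}, where the extended range is obtained precisely by composing \cref{thm strichartz} with Sobolev embedding, via the vertical projection onto the slanted edge that you describe. Your algebraic verification that $\sigma_{0}+d(1/q^{*}-1/q)=d(\tfrac12-\tfrac1q)-\tfrac1p=\sigma(p,q)$ is right, and the dual computation on the source side is handled the same way.

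One small point: you state the Sobolev embedding only for $2\le p_{1}\le p_{2}<\infty$, but on the source side you need $H^{\widetilde{\sigma},\widetilde{q}'}\hookrightarrow H^{\widetilde{\sigma}_{0},\widetilde{q}^{*\prime}}$ with $\widetilde{q}'\le\widetilde{q}^{*\prime}\le2$. This is not a real gap, since the embedding on bounded-geometry manifolds holds for all $1<p_{1}\le p_{2}<\infty$ (or, equivalently, follows by duality from the case $p_{1},p_{2}\ge2$ using $(H^{s,p})^{*}=H^{-s,p'}$); just drop the unnecessary lower restriction $p_{1}\ge2$ when you write it up.
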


\subsection{Global well-posedness 
in $L^{p}(\mathbb{R},L^q(\mathbb{X}))$}
\label{subsection GWP}

By combining the classical fixed point scheme with the 
previous Strichartz inequalities, one obtains the global
well-posedness for the semi-linear equation
\begin{align}\label{semi-linear equation}
    \begin{cases}
        \partial_{t}^2 u(t,x)-\Delta u(t,x)=F(u(t,x)),\\[5pt]
        u(0,x)=f(x),\,\partial_{t}|_{t=0}u(t,x)=g(x).
    \end{cases}
\end{align}
on $\mathbb{X}$ with small initial data $f$ and $g$ and 
power-like nonlinearities $F$ satisfying
\begin{align*}
    |F(u)| \lesssim |u|^{\gamma}
    \quad\textnormal{and}\quad
    |F(u)-F(v)|
    \lesssim (|u|^{\gamma-1}+|v|^{\gamma-1})|u-v|
\end{align*}
where $\gamma>1$.
Let $\gamma_{c}=1+\frac{4}{d-1}$ be the conformal power.
The global existence of solutions to the semi-linear wave
equation \eqref{semi-linear equation} on $\mathbb{R}^{d}$
is related to the Strauss conjecture: the critical power 
$\gamma_{0}$, i.e., the infimum of all $\gamma\in(1,\gamma_{c}]$
such that \eqref{semi-linear equation} has global solutions
for small initial data, is the positive root of the quadratic equation
\begin{align*}
    (d-1)\gamma_{0}^{2} - (d+1)\gamma_{0}-2 =0
    \quad (d\ge2).
\end{align*}
In other words,
\begin{align*}
\textstyle
    \gamma_{0}
    = \frac{1}{2}+\frac{1}{d-1} + 
        \sqrt{(\frac{1}{2}+\frac{1}{d-1})^2+\frac{2}
            {d-1}}>1.
\end{align*}
We refer to \cite{Joh1979,Kat1980,Stra1981,GLS1997,Tat2001} 
and the references therein for more details about the Strauss
conjecture in the Euclidean setting.
In negative curvature, the global existence for small 
initial data has been proved, for any $\gamma\in(1,\gamma_{c}]$, 
on real hyperbolic spaces of dimension $d=3$
\cite{MeTa2011,MeTa2012} and then of any dimension $d\ge2$
\cite{AnPi2014}. In other words, there is no phenomenon  
analogous to the Strauss conjecture on such spaces.
Similar results were extended later to Damek-Ricci spaces, 
which contain all noncompact symmetric spaces of rank one 
\cite{APV2015},
and have been established recently on simply connected
complete Riemannian manifolds with strictly negative sectional
curvature \cite{SSW2019}, and on non-trapping asymptotically
hyperbolic manifolds \cite{SSWZ2020}.
Next theorem shows that the same phenomenon holds 
on general  noncompact symmetric spaces. More precisely, 
we prove that the semi-linear wave equation
\eqref{semi-linear equation} on $\mathbb{X}$ is globally
well-posed.\\

To state the following theorem, we need to introduce some
notation. Consider the following powers
\begin{align*}
    \gamma_1 =& 1+\frac{3}{d}
    \hspace{50pt}
    \gamma_2 = 1+\frac{2}{\frac{d-1}{2}+\frac{2}{d-1}}\\[5pt]
    \gamma_3 
        =& \begin{cases}
            1 + \frac{4}{d-2} 
            &\textnormal{if \,} d \le 5,\\[5pt]
            \frac{d-1}{2} + \frac{3}{d+1} - 
            \sqrt{( \frac{d-3}{2} + \frac{3}{d+1})^2 - 
            4\frac{d-1}{d+1}}  
            &\textnormal{if \,} d \ge 6,
    \end{cases}
\end{align*}
and the following curves
\begin{align*}
    \sigma_1(\gamma) 
        =&\ \frac{d+1}{4}-\frac{(d+1)(d+5)}{8d}
        \frac{1}{\gamma-\frac{d+1}{2d}},\\[5pt]
    \sigma_2(\gamma) 
        =&\ \frac{d+1}{4} - \frac{1}{\gamma -1},\quad 
    \sigma_3(\gamma) 
        = \frac{d}{2} - \frac{2}{\gamma -1}.
\end{align*}

\begin{theorem}\label{GWP}
The semi-linear Cauchy problem \eqref{semi-linear equation} is 
globally well-posed for small initial data in
$H^{\sigma,2}(\mathbb{X})\times H^{\sigma-1,2}(\mathbb{X})$
provided that
\begin{align*}
\begin{cases}
    \sigma>0 
    &\textnormal{if \,} 
        1< \gamma \le \gamma_1, \\[5pt]
    \sigma\ge\sigma_1 (\gamma) 
    &\textnormal{if \,}  
        \gamma_1< \gamma \le \gamma_2, \\[5pt]
    \sigma\ge\sigma_2 (\gamma) 
        &\textnormal{if \,}  
        \gamma_2\le \gamma \le \gamma_c, \\[5pt]
    \sigma\ge\sigma_3 (\gamma) 
    &\textnormal{if \,}  
        \gamma_c \le \gamma \le \gamma_3.
\end{cases}
\end{align*}
More precisely, in each case, there exists $2\le p,q<+\infty$
such that for any small initial data $(f,g)$ in
$H^{\sigma,2}(\mathbb{X})\times{H}^{\sigma-1,2}(\mathbb{X})$,
the Cauchy problem \eqref{semi-linear equation} has a unique solution in the Banach space
\begin{align*}
    \mathcal{C}(\mathbb{R};H^{\sigma,2}(\mathbb{X})) 
    \cap
    \mathcal{C}^{1}(\mathbb{R};H^{\sigma-1,2}(\mathbb{X})) 
    \cap
    L^{p}(\mathbb{R};L^{q}(\mathbb{X})).
\end{align*}
\end{theorem}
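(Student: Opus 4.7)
The plan is to solve \eqref{semilinear equation} by applying the Banach fixed point theorem to the Duhamel operator
\[
\Phi(u)(t,x) = (\cos t\sqrt{-\Delta})f(x) + \textstyle\frac{\sin t\sqrt{-\Delta}}{\sqrt{-\Delta}}g(x) + \int_{0}^{t}\!ds\,\textstyle\frac{\sin(t-s)\sqrt{-\Delta}}{\sqrt{-\Delta}}\,F(u(s)),
\]
restricted to the closed ball of some small radius $\varepsilon>0$ in the Banach space
\[
E = \mathcal{C}(\mathbb{R}; H^{\sigma,2}(\mathbb{X})) \cap \mathcal{C}^{1}(\mathbb{R}; H^{\sigma-1,2}(\mathbb{X})) \cap L^{p}(\mathbb{R}; L^{q}(\mathbb{X})),
\]
where the pair $(p,q)$ is to be chosen in each of the four ranges of $\gamma$ so as to match the corresponding critical Sobolev exponent $\sigma_{i}(\gamma)$.

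The core estimate is obtained by applying the extended Strichartz inequality \cref{corollary strichartz} twice: once with $(\infty,2)$ on the solution side (to control the homogeneous part by the initial data in $H^{\sigma,2}\times H^{\sigma-1,2}$ and to bound the $L^{p}L^{q}$ norm of $\Phi(u)$) and once with a dual pair $(\widetilde{p},\widetilde{q})$ on the source side (to estimate the Duhamel integral). This yields
\[
\|\Phi(u)\|_{E} \lesssim \|f\|_{H^{\sigma,2}} + \|g\|_{H^{\sigma-1,2}} + \|F(u)\|_{L^{\widetilde{p}'}(\mathbb{R};\,H^{\widetilde{\sigma},\widetilde{q}'}(\mathbb{X}))}.
\]
The nonlinear factor is then estimated by combining the pointwise bound $|F(u)|\lesssim|u|^{\gamma}$ with H\"older's inequality in time and on $\mathbb{X}$, together with the Sobolev embeddings on $\mathbb{X}$ (see \cite{Tri1992}), to get $\|F(u)\|_{L^{\widetilde{p}'}H^{\widetilde{\sigma},\widetilde{q}'}} \lesssim \|u\|_{E}^{\gamma}$ as soon as the exponents $p,q,\widetilde{p},\widetilde{q},\sigma,\widetilde{\sigma}$ satisfy the required scaling identities. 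The same computation applied to the Lipschitz bound $|F(u)-F(v)|\lesssim(|u|^{\gamma-1}+|v|^{\gamma-1})|u-v|$ yields
\[
\|\Phi(u)-\Phi(v)\|_{E} \lesssim \bigl(\|u\|_{E}^{\gamma-1}+\|v\|_{E}^{\gamma-1}\bigr)\|u-v\|_{E},
\]
so that $\Phi$ contracts on a sufficiently small ball, and the contraction mapping principle provides the desired global solution.

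The four regimes of $\gamma$ in the statement correspond to the four optimal placements of $(p,q)$ in the extended admissibility square of \cref{corollary strichartz}. When $\gamma$ is close to $1$, the strong large-time decay $|t|^{-D/2}$ established in \cref{large time dispersive estimate} permits $(p,q)$ to be chosen deep inside the square, and only $\sigma>0$ is needed to balance the scaling. For intermediate $\gamma$ one is driven to the upper edge $\{1/q=1/2\}$ or to the conformal-slope edge $\{1/p=\tfrac{d-1}{2}(1/2-1/q)\}$ of the admissible triangle, producing the critical curves $\sigma_{1}(\gamma)$ and $\sigma_{2}(\gamma)$; the transition powers $\gamma_{1}$ and $\gamma_{2}$ are the corner values where the optimum jumps from one edge to the next. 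For $\gamma\ge\gamma_{c}$ one is forced to the endpoint corner $(p,q)=(2,\tfrac{2d}{d-2})$, where the Keel--Tao $TT^{*}$ dyadic decomposition mentioned after \cref{large time dispersive estimate} is required to recover the full endpoint Strichartz estimate; this yields the curve $\sigma_{3}(\gamma)$, and the upper cutoff $\gamma_{3}$ is the largest exponent for which the scaling, admissibility and Sobolev-embedding constraints remain simultaneously compatible.

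The main obstacle is the sharp balancing of the six exponents $p,q,\widetilde{p},\widetilde{q},\sigma,\widetilde{\sigma}$ against $\gamma$ so as to extract exactly the critical curves $\sigma_{i}(\gamma)$ rather than weaker sufficient conditions; this requires a careful case analysis in each regime. Once this optimisation is performed, the argument is essentially a transcription of the rank-one proofs \cite{APV2012,AnPi2014}, the only geometric input from the higher-rank setting being the improved large-time decay rate $|t|^{-D/2}$ replacing the rank-one rate $|t|^{-3/2}$, which is precisely what guarantees convergence of the time integrals on the whole real line without any a priori time restriction.
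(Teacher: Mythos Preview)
Your proposal is correct and follows exactly the approach the paper indicates: the authors explicitly omit the proof, stating that it is ``adapted straightforwardly from \cite{APV2012,AnPi2014}'', and your sketch---Banach fixed point on the Duhamel map, extended Strichartz estimates from \cref{corollary strichartz}, H\"older plus Sobolev to close the nonlinear term, and a case-by-case optimisation of $(p,q,\widetilde{p},\widetilde{q},\sigma,\widetilde{\sigma})$ yielding the curves $\sigma_i(\gamma)$---is precisely the rank-one argument of those references transplanted to $\mathbb{X}$. The only (minor) inaccuracy is your remark that the large-time decay $|t|^{-D/2}$ is ``improved'' over the rank-one rate: since $D=3$ in rank one it is the same exponent in that case, and its role here is not so much to enlarge the admissible region (which is already the full triangle by \cref{thm strichartz}) as to make the time integrals in the $TT^{*}$ argument converge on all of $\mathbb{R}$.
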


\section{Further results for Klein-Gordon equations}
\label{section Klein-Gordon}
The kernel estimates and dispersive estimates proved above
for the wave equation still hold if we replace the operator
$(-\Delta)^{-\frac{\sigma}{2}}e^{it\sqrt{-\Delta}}$ 
by $\mathbf{D}^{-\sigma}e^{it\mathbf{D}}$, where
\begin{align*}
    \mathbf{D}
    =\sqrt{-\Delta-|\rho|^2+\kappa^{2}}
    \quad\textnormal{with}\quad\,\kappa>0.
\end{align*}
Then, for every admissible couple $(p,q)$, the operator
\begin{align*}
    \mathbf{T}f(t,x)
    = \mathbf{D}_{x}^{-\frac{\sigma}{2}}e^{it\mathbf{D}_{x}} f(x)
\end{align*}
is again bounded from $L^2(\mathbb{X)}$ to 
$L^{p}(\mathbb{R};L^{q}(\mathbb{X}))$, 
and its adjoint
\begin{align*}
    \mathbf{T}^{*}F(x)
    = \int_{-\infty}^{+\infty}ds\,
        \mathbf{D}_{x}^{-\frac{\sigma}{2}}\,e^{-is\mathbf{D}_{x}}\,
        F(s,x)
\end{align*}
from $L^{p'}(\mathbb{R};L^{q'}(\mathbb{X}))$ to $L^2(\mathbb{X)}$.
While $L^2$ Sobolev spaces may be defined in terms of $\mathbf{D}$,
we need the operator
\begin{align*}
    \widetilde{\mathbf{D}}
    = \sqrt{-\Delta-|\rho|^2+\widetilde{\kappa}^{2}}
    \quad\textnormal{with}\quad\,\widetilde{\kappa}\ge|\rho|,
\end{align*}
in order to define $L^q$ Sobolev spaces when $q$ gets large.
As $\widetilde{\mathbf{D}}^{-\frac{\sigma}{2}}\circ
\mathbf{D}^{\frac{\sigma}{2}}$ is a topological
automorphism of $L^2(\mathbb{X})$, the operator
\begin{align*}
    \widetilde{\mathbf{T}}f(t,x)
    = \widetilde{\mathbf{D}}_{x}^{-\frac{\sigma}{2}}\,
        e^{it\mathbf{D}_{x}}f(x)
\end{align*}
is also bounded from $L^2(\mathbb{X)}$ to
$L^{p}(\mathbb{R};L^{q}(\mathbb{X}))$, and its adjoint
\begin{align*}
    \widetilde{\mathbf{T}}^{*}F(x)
    = \int_{-\infty}^{+\infty}ds\,
        \widetilde{\mathbf{D}}_{x}^{-\frac{\sigma}{2}}\,
        e^{-is\mathbf{D}_{x}}\,
        F(s,x)
\end{align*}
from $L^{p'}(\mathbb{R};L^{q'}(\mathbb{X}))$ to $L^2(\mathbb{X)}$,
hence 
\begin{align*}
    \widetilde{\mathbf{T}}\widetilde{\mathbf{T}}^{*}F(t,x)
    = \int_{-\infty}^{+\infty}ds\,
        \widetilde{\mathbf{D}}_{x}^{-\sigma}\,e^{i(t-s)\mathbf{D}_{x}}\,
        F(s,x)
\end{align*}
from $L^{p'}(\mathbb{R};L^{q'}(\mathbb{X}))$ to
$L^{\widetilde{p}}(\mathbb{R};L^{\widetilde{q}}(\mathbb{X}))$ 
for all admissible couples $(p,q)$ and $(\widetilde{p},\widetilde{q})$.
We deduce that \cref{thm strichartz} and \cref{corollary strichartz}
still hold for solutions to the inhomogeneous Klein-Gordon equation
\begin{align*}
    \begin{cases}
        \partial_{t}^2\,u(t,x)+\mathbf{D}^{2}\,u(t,x)=F(t,x), \\[5pt]
    u(0,x) =f(x),\,\partial_{t}|_{t=0} u(t,x)=g(x),
    \end{cases}
\end{align*}
and that the corresponding semi-linear equation is globally well-posed 
with low regularity data, see \cref{GWP}.

\appendix
\section{Oscillatory integral on $\mathfrak{a}$}
\label{Appendix A}

In this appendix, we prove the following lemma
which is used in the proofs of 
\cref{theorem subsection 1} and 
\cref{theorem subsection 3}.
Recall that $A=A(kx)$ is the $\mathfrak{a}$-component
of $kx\in\mathbb{X}$ in the Iwasawa decomposition, 
and that $C_{\Sigma}\in(0,\frac{1}{2}]$ is a 
fixed constant.

\begin{lemma}\label{Appendix A-lemma}
Let $s\in\mathbb{R}_{+}$ and $|t|\ge1$. 
Consider the oscillatory integral
\begin{align*}
    I^{-}(s,t,x)
    = \int_{\mathfrak{a}}d\lambda\,
    a_{0}(s,\lambda)\,e^{it\psi_{t}(\lambda)}
\end{align*}
where the phase is given by
\begin{align*}
\textstyle
    \psi_{t}(\lambda)
    = \sqrt{|\lambda|^2+|\rho|^2}
        + \langle\frac{A}{t},\lambda\rangle
\end{align*}
and the amplitude
\begin{align*}
    a_{0}(s,\lambda)
    =\chi_{0}^{\rho}(\lambda)\,
        |\mathbf{c}(\lambda)|^{-2}\,
        e^{-s\sqrt{|\lambda|^2+|\rho|^2}}
\end{align*}
vanishes unless $|\lambda|\le2|\rho|$.
Then, for all $x\in\mathbb{X}$ such that
$\frac{|x|}{|t|}\le{C_{\Sigma}}$, 
\begin{align*}
    |I^{-}(s,t,x)|
    \lesssim
    |t|^{-\frac{D}{2}}\,(1+|x|)^{\frac{D-\ell}{2}}\,
    e^{-\frac{|\rho|}{2}s}.
\end{align*}
\end{lemma}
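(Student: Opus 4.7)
The plan is to carry out the stationary phase method around the unique critical point $\lambda_0$ of $\psi_t$ on $\supp\chi_0^\rho$, which is identified by $(|\lambda_0|^2+|\rho|^2)^{-1/2}\lambda_0=-A/t$ and satisfies both $|\lambda_0|\lesssim|x|/|t|$ and $|\lambda_0|<|\rho|/\sqrt 3$ by the assumption $|x|/|t|\le C_\Sigma\le 1/2$. Since $\sqrt{|\lambda|^2+|\rho|^2}\ge|\rho|$ on $\supp a_0$, the factor $e^{-s\sqrt{|\lambda|^2+|\rho|^2}}\le e^{-|\rho|s/2}$ is pulled out uniformly, yielding the exponential $s$-decay in the claim.

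First, I would verify that the Hessian
\[
\Hess\psi_t(\lambda)=(|\lambda|^2+|\rho|^2)^{-1/2}I-(|\lambda|^2+|\rho|^2)^{-3/2}\lambda\otimes\lambda
\]
is uniformly positive definite on $\{|\lambda|\le 2|\rho|\}$, apply a Morse change of variables $\mu=\Phi(\lambda)$ on a fixed neighbourhood $U$ of $\lambda_0$ to reduce $\psi_t(\lambda)-\psi_t(\lambda_0)$ to $\tfrac12\langle H_0\mu,\mu\rangle$, and dispose of the contribution outside $U$ via integration by parts based on $e^{it\psi_t}=(it)^{-1}|\nabla\psi_t|^{-2}\langle\nabla\psi_t,\nabla\rangle e^{it\psi_t}$, which produces arbitrary polynomial decay in $|t|$.

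The crux is the $U$-contribution. Rescaling $\mu=\nu/\sqrt{|t|}$ yields the prefactor $|t|^{-\ell/2}$ and reduces matters to a Fresnel-type integral in $\nu$ with amplitude $|\mathbf{c}(\lambda_0+\nu/\sqrt{|t|})|^{-2}$. The key input from Sect.~\ref{subsection spherical fourier analysis} is $|\mathbf{c}(\lambda)|^{-2}\lesssim|\lambda|^{D-\ell}$ on $\{|\lambda|\le 2|\rho|\}$; combined with the estimate $|\langle\alpha,\lambda_0+\nu/\sqrt{|t|}\rangle|\lesssim|x|/|t|+|\nu|/\sqrt{|t|}$ for each $\alpha\in\Sigma_{r}^{+}$, this gives
\[
|\mathbf{c}(\lambda_0+\nu/\sqrt{|t|})|^{-2}\lesssim(|x|/|t|)^{D-\ell}+|t|^{-(D-\ell)/2}|\nu|^{D-\ell}.
\]
The first term integrates against the Fresnel kernel to a constant, while the second gives an integral of the form $\int(1+|\nu|)^{D-\ell}\,e^{\tfrac{i}{2}\langle H_0\nu,\nu\rangle}\,d\nu$, which is $O(1)$ by iterated integration by parts in $\nu$ using the nondegeneracy of $H_0$. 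Adding the two yields
\[
|I^-(s,t,x)|\lesssim|t|^{-\ell/2}\bigl\{(|x|/|t|)^{D-\ell}+|t|^{-(D-\ell)/2}\bigr\}e^{-|\rho|s/2}.
\]
The assumption $|x|\le C_\Sigma|t|$ forces $|x|^2\le(1+|x|)|t|$, hence $(|x|/|t|)^{D-\ell}\lesssim|t|^{-(D-\ell)/2}(1+|x|)^{(D-\ell)/2}$, and the bound collapses to $|t|^{-D/2}(1+|x|)^{(D-\ell)/2}e^{-|\rho|s/2}$, as stated.

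The main obstacle is that $|\mathbf{c}|^{-2}$ is not a smooth symbol on $\mathfrak{a}$, so no textbook stationary phase theorem with sharp error terms applies directly. The remedy is to treat the vanishing factor $\prod_{\alpha\in\Sigma_{r}^{+}}|\langle\alpha,\cdot\rangle|^2$ by the pointwise bound above, while the remaining smooth factors (each a one-dimensional symbol in $\langle\alpha,\cdot\rangle$) are handled via the symbol estimates from Sect.~\ref{subsection spherical fourier analysis}. This is the same strategy used in \cite{Zha2020} in the complex case, where $|\mathbf{c}|^{-2}$ is itself a polynomial and the argument becomes transparent.
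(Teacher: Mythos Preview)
Your overall strategy matches the paper's: localize near the unique nondegenerate critical point $\lambda_0$, apply a Morse change of variables there, and kill the complementary region by repeated integration by parts. The exponential $s$-decay and the closing algebra $(|x|/|t|)^{D-\ell}\lesssim|t|^{-(D-\ell)/2}(1+|x|)^{(D-\ell)/2}$ are also correct.

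The gap is in the crux step. After rescaling $\mu=\nu/\sqrt{|t|}$ you derive a \emph{pointwise} bound on the amplitude,
\[
|\mathbf{c}(\lambda_0+\nu/\sqrt{|t|})|^{-2}\;\lesssim\;(|x|/|t|)^{D-\ell}+|t|^{-(D-\ell)/2}|\nu|^{D-\ell},
\]
and then proceed as if this were a \emph{decomposition}, integrating each piece separately against the Fresnel factor and invoking oscillatory cancellation for each. But an inequality $|f|\le g_1+g_2$ does not yield $\bigl|\int f\,e^{i\phi}\bigr|\le\bigl|\int g_1\,e^{i\phi}\bigr|+\bigl|\int g_2\,e^{i\phi}\bigr|$; oscillatory integral estimates require control of the actual amplitude and its derivatives, not of a majorant. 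If you simply insert absolute values you only get $\int_{|\nu|\lesssim\sqrt{|t|}}\bigl((|x|/|t|)^{D-\ell}+|t|^{-(D-\ell)/2}|\nu|^{D-\ell}\bigr)d\nu$, which grows like $|t|^{\ell/2}$ and cancels the prefactor entirely. Your claim that $\int(1+|\nu|)^{D-\ell}e^{i\langle H_0\nu,\nu\rangle/2}\,d\nu=O(1)$ ``by integration by parts'' is likewise not justified as stated: the amplitude here is a majorant, not the true integrand, so there is nothing to integrate by parts.

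The paper repairs this by Taylor-expanding the \emph{full} amplitude $\tilde a(s,\lambda(\mu))$ (including $|\mathbf{c}|^{-2}$, the Jacobian, and the Gaussian) at $\mu=0$ to order $2M>D$, after inserting $e^{-|\mu|^2}\cdot e^{|\mu|^2}$ to ensure convergence. The Taylor coefficients $c_k$ at $\mu=0$ all carry the factor $|\mathbf{c}(\lambda_0)|^{-2}\lesssim(|x|/|t|)^{D-\ell}$, and each monomial $\mu^k$ integrates \emph{explicitly} against $e^{-(1-it)|\mu|^2}$ to give $O(|t|^{-(\ell+|k|)/2})$; this is where the decisive factor $|t|^{-\ell/2}(|x|/|t|)^{D-\ell}$ comes from. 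The Taylor remainder $R_{2M}(\mu)$ vanishes to order $2M+1$ at $0$ and is smooth, so integration by parts based on $e^{it|\mu|^2}=(2it|\mu|^2)^{-1}\mu\cdot\nabla e^{it|\mu|^2}$ is legitimate and yields $O(|t|^{-M})$. In short, the smoothness of $|\mathbf{c}|^{-2}$ is exploited through its Taylor jet at the critical point, not through a pointwise majorant; your sketch has the right intuition but skips precisely this mechanism.
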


\begin{remark}
The proof of this lemma is similar to the proof of
\cite[Theorem 3.1.(ii)]{Zha2021}, except that our
amplitude involves the general Plancherel density 
and in addition an exponential factor depending on $s$.  
\end{remark}

\begin{proof}
By symmetry, we may assume that $t\ge1$.
Recall that the critical point $\lambda_{0}$ of the
phase $\psi_t$ is given by
\begin{align}\label{Appendix-phase}
\textstyle
    (|\lambda_{0}|^2+|\rho|^2)^{-\frac{1}{2}} 
    \lambda_{0} = -\frac{A}{t}
\end{align}
and satisfies
\begin{align}\label{Appendix-phase estimate}
\textstyle
    |\lambda_{0}| 
    = |\rho| \frac{|A|}{|t|}
        (1-\frac{|A|^2}{t^2})^{-\frac{1}{2}}
    \le |\rho| \frac{|x|}{t}
        (1-\frac{|x|^2}{t^2})^{-\frac{1}{2}}
    < \frac{|\rho|}{\sqrt{3}},
\end{align}
as $|A|\le|x|$ and $\frac{|x|}{t}\le\frac{1}{2}$. 
Denote by 
\begin{align*}
    B(\lambda_{0},\eta)
    =\lbrace\lambda\in\mathfrak{a}\,|\,
        |\lambda-\lambda_{0}|\le\eta\rbrace
\end{align*}
the ball in $\mathfrak{a}$ centered at $\lambda_{0}$, 
where the radius $\eta$ will be specified later.
Notice that $|\lambda|<|\rho|+\eta$ for all 
$\lambda\in{B}(\lambda_{0},\eta)$.
Let $P_{\lambda}$ be the projection onto the vector
spanned by $\frac{\lambda}{|\lambda|}$.
Then $|\lambda|^2P_{\lambda}=\lambda\otimes\lambda$
and the Hessian matrix of $\psi_{t}$ is given by
\begin{align*}
    \Hess \psi_{t}(\lambda)
    =&\,
    (|\lambda|^2+|\rho|^2)^{-\frac{1}{2}}\,I_{\ell}
    - (|\lambda|^2+|\rho|^2)^{-\frac{3}{2}}\,
    \lambda\otimes\lambda\\[5pt]
    =&\,
    (|\lambda|^2+|\rho|^2)^{-\frac{3}{2}}\,
    \lbrace|\rho|^2P_{\lambda}+
    (|\lambda|^2+|\rho|^2)(I_{\ell}-P_{\lambda})
    \rbrace\\[5pt]
    =&\,
    (|\lambda|^2+|\rho|^2)^{-\frac{3}{2}}\,
    \left(\begin{array}{c|c}
    |\rho|^2  & 0 \\[5pt] \hline
    { } & { } \\
    0 &  ( |\lambda|^2 + |\rho|^2) I_{\ell-1} \\
    { } & { } \\
    \end{array}\right)
\end{align*}
which is a positive definite symmetric matrix.
Hence $\lambda_{0}$ is a nondegenerate critical point.
Since $\nablaA\psi_{t}(\lambda_{0})=0$, we write
\begin{align*}
    &\psi_{t}(\lambda)-\psi_{t}(\lambda_{0})\\
    &=  (\lambda-\lambda_{0})^{T}\,
    \Big\lbrace
    \underbrace{\int_{0}^{1}du\,(1-u)\,   
    \Hess\psi_{t}\big(\lambda_{0}+u(\lambda-\lambda_{0})\big)
    }_{\mathcal{M}(\lambda)}
    \Big\rbrace\,
    (\lambda-\lambda_{0}),
\end{align*}
where $\mathcal{M}(\lambda)$ belongs, for every
$\lambda\in{B}(\lambda_{0},\eta)$, to a compact subset
of the set of positive definite symmetric
matrices.
We introduce a new variable 
$\mu=\mathcal{M}(\lambda)^{\frac{1}{2}}
(\lambda-\lambda_{0})$, then 
$|\mu|^2=\psi_{t}(\lambda)-\psi_{t}(\lambda_{0})$
and $\mu=0$ if and only if $\lambda=\lambda_{0}$.
There exist $0<\widetilde{\eta}_{1}\le\widetilde{\eta}_{2}$ 
such that $\mu\in{B}(0,\widetilde{\eta}_{1})$ implies 
$\lambda\in{B}(\lambda_{0},\eta)$, and 
$\lambda\in{B}(\lambda_{0},\eta)$ implies  
$\mu\in{B}(0,\widetilde{\eta}_{2})$.
Notice that for every $k\in\mathbb{N}$, there exists
$C_{k}>0$ such that
\begin{align}\label{Appendix-estimate of M}
    |\nablaA^{k} \mathcal{M}(\lambda)^{\frac{1}{2}}|\le C_k
    \quad\forall\,\lambda\in{B}(\lambda_{0},\eta).
\end{align}
Denote by $j(\lambda)$ the Jacobian matrix such that 
$d\mu= \det[\,j(\lambda)\,]d\lambda$, then we can choose 
$\eta>0 $ small enough such that
\begin{align}\label{Appendix-estimate of j}
    \det[\,j(\lambda)\,] 
    > \frac{1}{2} \det[\,\mathcal{M}(\lambda)^{\frac{1}{2}}\,]
    \quad\forall\,\lambda\in{B}(\lambda_{0},\eta).
\end{align}
Now, we split up 
\begin{align*}
    I^{-}(s,t,x) 
    &=
    I_{0}^{-}(s,t,x)+I_{\infty}^{-}(s,t,x)\\[5pt]
    &=\int_{\mathfrak{a}}d\lambda\,
        \chi_{0}^{\eta}(\lambda)\,
        a_{0}(s,\lambda)\,
        e^{it \psi_{t} (\lambda)}
        +\int_{\mathfrak{a}}d\lambda\,
        \chi_{\infty}^{\eta}(\lambda)\,a_{0}(s,\lambda)\,
        e^{it \psi_{t} (\lambda)}
\end{align*}
where $\chi_{0}^{\eta}:\mathfrak{a}\rightarrow[0,1]$ is 
a smooth cut-off function which vanishes unless $|\lambda-\lambda_{0}|\le\frac{\eta}{2}$,
$\chi_{0}^{\eta}(\lambda)=1$ if 
$|\lambda-\lambda_{0}|\le\frac{\eta}{4}$,
and $\chi_{\infty}^{\eta}=1-\chi_{0}^{\eta}$. \\
\paragraph{\bf Estimate of $I_{0}^{-}$.}
We estimate $I_{0}^{-}$ by using the stationary phase analysis
described in \cite[Chap.VIII 2.3]{Ste1993}. Notice that
$\supp\chi_{0}^{\eta}\subset{B}(\lambda_{0},\eta)$.
By substituting $\psi_{t}(\lambda)=|\mu|^{2}+\psi_{t}(\lambda_{0})$, 
we get
\begin{align*}
    I_{0}^{-}(s,t,x) 
    = e^{it\psi_{t}(\lambda_{0})}\,\int_{\mathfrak{a}}d\mu\, 
        \widetilde{a}(s,\lambda(\mu))\,e^{it |\mu|^{2}}
\end{align*}
where the amplitude
\begin{equation*}\begin{aligned}
    \widetilde{a}(s,\lambda(\mu)) 
    &= \chi_{0}^{\eta}(\lambda(\mu))\, 
        \chi_{0}^{\rho}(\lambda(\mu))\,
        \boldsymbol{\pi}(\lambda(\mu))^{2}\\
     &\times   |\mathbf{b}(\lambda(\mu))|^{-2}\,
        e^{-s\sqrt{|\lambda(\mu)|^2+|\rho|^2}}\,
        \det[\,j(\lambda(\mu))\,]^{-1}
\end{aligned}\end{equation*}
is smooth and compactly supported in $B(0,\widetilde{\eta}_{2})$.
We deduce, from \eqref{Appendix-estimate of M} and 
\eqref{Appendix-estimate of j} that $\widetilde{a}(s,\lambda(\mu))$ 
is bounded, together with all its derivatives. 
Let 
$\chi_{\widetilde{\eta}_{2}}\in\mathcal{C}_{c}^{\infty}(\mathfrak{a})$ 
be a bump function such that $\chi_{\widetilde{\eta}_{2}}=1$ on
$B(0,\widetilde{\eta}_{2})$. Then
\begin{align*}
    I_{0}^{-}(s,t,x) 
    = e^{ it \psi_{t} (\lambda_{0})} \int_{\mathfrak{a}}d\mu\,
        \chi_{\widetilde{\eta}_{2}}(\mu)\,
        e^{it|\mu|^{2}}\,e^{-|\mu|^{2}} 
        \lbrace{e^{|\mu|^{2}}\,\widetilde{a}(s,\lambda(\mu))}\rbrace.
\end{align*}
Consider the Taylor expansion at the origin
\begin{align*}
    e^{|\mu|^{2}}\,\tilde{a}(s,\lambda(\mu))
    = \sum_{|k|\le2M} c_{k}\,\mu^{k} + R_{2M}(\mu),
\end{align*}
where $2M$ is the smallest even integer $>D$. By expanding
\begin{align*}
\boldsymbol{\pi}(\lambda(\mu))^2=\prod_{\alpha\in\Sigma_r^+}\bigl[\langle\alpha,\lambda_0\rangle+\langle\alpha,\mathcal{M}(\lambda(\mu))^{-\frac12}\mu\rangle\bigr]^2,
\end{align*}
we see that its coefficients satisfy
\begin{align}\label{Appendix-ck}
\textstyle
    |c_{k}|
    \lesssim
    |\lambda_0|^{N_{|k|}}\,
    (1+s)^{|k|}\,
    e^{-s\sqrt{|\lambda_{0}|^2+|\rho|^2}}
    \lesssim
    (\frac{|x|}{t})^{N_{|k|}}\,
    (1+s)^{|k|}\,e^{-|\rho|s},
\end{align}
where $N_{|k|}=\max\,\{0,2\,|\Sigma_r^+|-|k|\}$, while the remainder satisfies
\begin{align}\label{Appendix-Taylor remainder}
    |\nablaA^{n}\,R_{2M}(\mu)|
    \lesssim 
    |\mu|^{2M+1-n}\,
    (1+s)^{2M+1+n}\,e^{-|\rho|s}
    \quad\forall\,0\le{n}\le{2M+1}.
\end{align}
By substituting this expansion in the above integral,
$I_{0}^{-}(s,t,x)$ is the sum of following three terms:
\begin{align*}
    I_1 
    = \sum_{|k|\le2M} c_k 
        \int_{\mathfrak{a}}d\mu\, 
        \mu^{k}\,e^{it|\mu|^{2}}\,e^{-|\mu|^{2}} \\[5pt]
    I_2 
    = \int_{\mathfrak{a}}d\mu\,
        \chi_{\widetilde{\eta}_{2}}(\mu)\,R_{2M}(\mu)\,
        e^{it|\mu|^{2}}\,e^{-|\mu|^{2}},
\end{align*}
and
\begin{align*}
    I_3  
    = \sum_{|k|\le2M} c_k  
        \int_{\mathfrak{a}}d\mu\, 
        \lbrace\chi_{\widetilde{\eta}_{2}}(\mu)-1\rbrace\,\mu^{k}\,
        e^{it|\mu|^{2}}\,e^{-|\mu|^{2}}.
\end{align*}
To estimate $I_{1}$, we write 
\begin{align*}
    I_1 
    = \sum_{|k|\le2M} c_k \prod_{j=1}^{\ell} 
        \int_{-\infty}^{+\infty}d\mu_{j}\,
        e^{it\mu_{j}^{2}}\,e^{-\mu_{j}^2}\,\mu_{j}^{k_{j}}
\end{align*}
where
\begin{align*}
    \int_{-\infty}^{+\infty}d\mu_{j}\,
        e^{-(1-it)\mu_{j}^{2}}\,\mu_{j}^{k_{j}}=0
\end{align*}
if $k_{j}$ is odd, while
\begin{align*}
    \int_{-\infty}^{+\infty}d\mu_{j}\,
        e^{-(1-it)\mu_{j}^{2}}\,\mu_{j}^{k_{j}}
    = 2\,(1-it)^{-\frac{k_{j}+1}{2}}
        \int_{0}^{+\infty}dz_{j}\,e^{-z_{j}^{2}}\,
        z_{j}^{k_{j}}
\end{align*}
by a change of contour if $k_{j}$ is even.
We deduce from \eqref{Appendix-ck}
\begin{align*}
  |I_1|&\lesssim\sum_{|k|\le2|\Sigma_r^+|}t^{-\frac{|k|+\ell}2}\bigl(\tfrac{|x|}t\bigr)^{2|\Sigma_r^+|-|k|}(1+s)^{|k|}e^{-|\rho|s}\\
  &+\sum_{2|\Sigma_r^+|<|k|\le2M}t^{-\frac{|k|+\ell}2}(1+s)^{|k|}e^{-|\rho|s}\\
  &\lesssim t^{-\frac D2}(1+|x|)^{\frac{D-\ell}2}e^{-\frac{|\rho|}2s}
  \end{align*}
since $D=\ell+2|\Sigma_r^+|$ and $\frac{|x|}{t}\le{C}_{\Sigma}$.
Next, we perform $M$ integrations by parts based on
\begin{align}\label{Appendix-IBP I2}
\textstyle
    e^{it|\mu|^{2}}
    = -\frac{i}{2t}\,\sum_{j=1}^{\ell}
        \frac{\mu_{j}}{|\mu|^2}\,\frac{\partial}{\partial\mu_{j}}\, 
        e^{it|\mu|^{2}}
\end{align}
and obtain
\begin{align*}
    |I_{2}|
    \lesssim t^{-M}\,(1+s)^{3M+1}\,e^{-|\rho|s}
    \lesssim t^{-M}\,e^{-\frac{|\rho|}{2}s}
\end{align*}
according to \eqref{Appendix-Taylor remainder}.
Finally, as $\mu\mapsto\mu^{k}\,e^{-|\mu|^{2}}\,(\chi_{\widetilde{\eta}_2}-1)$
is exponentially decreasing and vanishes near the origin, we perform
$N\ge\frac{D}{2}$ integrations by parts based on \eqref{Appendix-IBP I2} 
again and obtain
\begin{align*}
    |I_{3}| \lesssim
        t^{-N}\,e^{-\frac{|\rho|}{2}s}.
\end{align*}
By summing up the estimates of $I_1$, $I_2$ and $I_3$, we deduce that
\begin{equation}\label{Appendix-I-}\begin{aligned}
    |I_{0}^{-}(s,t,x)| \,
      \lesssim\,
        t^{-\frac{D}{2}}\,(1+|x|)^{\frac{D-\ell}{2}}\,
        e^{-\frac{|\rho|}{2}s}.
\end{aligned}\end{equation}
\paragraph{\bf Estimate of $I_{\infty}^{-}$.}
Since the phase $\psi_{t}$ has a unique critical point $\lambda_{0}$ 
which is defined by \eqref{Appendix-phase} and satisfies 
\eqref{Appendix-phase estimate}, then for all 
$\lambda\in\supp\chi_{\infty}^{\eta}$, 
we have $\nablaA\psi_{t}(\lambda)\neq0$.
In order to get large time decay, we estimate 
\begin{align*}
    I_{\infty}^{-}(s,t,x)=
    \int_{\mathfrak{a}}d\lambda\,
        \chi_{\infty}^{\eta}(\lambda)\,a_{0}(s,\lambda)\,
        e^{it \psi_{t} (\lambda)}
\end{align*}
by using several integrations by parts based on
\begin{align*}
\textstyle
    e^{it\psi_{t}(\lambda)} 
    = \frac{1}{it}\,\widetilde{\psi}_{0}(\lambda)^{-1}\,
        \sum_{j=1}^{\ell}\,
        \big(\frac{\lambda_{j}}{\sqrt{|\lambda|^2+|\rho|^2}}
            +\frac{A_{j}}{t}\big)
        \frac{\partial}{\partial\lambda_{j}}\,e^{it \psi_{t}(\lambda)},
\end{align*}
where
\begin{align*}
\textstyle
    \widetilde{\psi}_{0}(\lambda) 
    = \big|\frac{\lambda}{\sqrt{|\lambda|^2+|\rho|^2}}
            +\frac{A}{t}\big|^2
\end{align*}
is a smooth function, which is bounded from below on the compact set
$(\supp\chi_{\infty}^{\eta})\cap(\supp\chi_{0}^{\rho})$,
uniformly in $\frac{A}{t}$.
After performing $N$ such integrations by parts, 
$I_{\infty}^{-}(s,t,x)$ becomes
\begin{align*}
    &\const\,(it)^{-N}
    \int_{\mathfrak{a}}d\lambda\,e^{it\psi_{t}(\lambda)}\\
    &\times\Big\lbrace -\sum_{j=1}^{\ell}\,
        \frac{\partial}{\partial\lambda_{j}}\circ
            \big[\widetilde{\psi}_{0}(\lambda)^{-1}
                {\textstyle
                \big(
                \frac{\lambda_{j}}{\sqrt{|\lambda|^2+|\rho|^2}}
                +\frac{A_{j}}{t}
                \big)}
            \big]
        \Big\rbrace^{N}
        \Big\lbrace\chi_{\infty}^{\eta}(\lambda)\,a_{0}(s,\lambda)\Big\rbrace
\end{align*}
where the last integral is bounded from above by
$(1+s)^{N}\,e^{-|\rho|s}\lesssim\,e^{-\frac{|\rho|}{2}s}$.
Hence
\begin{align}\label{Appendix-I+}
    |I_{\infty}^{-}(s,t,x)|
    \lesssim t^{-N}\,e^{-\frac{|\rho|}{2}s}
\end{align}
for every $N\in\mathbb{N}$.
By combining \eqref{Appendix-I-} and \eqref{Appendix-I+},
we conclude that
\begin{align*}
    |I^{-}(s,t,x)| \lesssim
        t^{-\frac{D}{2}}\,(1+|x|)^{\frac{D-\ell}{2}}\,
        e^{-\frac{|\rho|}{2}s}.
\end{align*}
\end{proof}

\section{Hadamard parametrix on symmetric spaces}
\label{Appendix B}

Let $\Phi_{v}$ be the $K$-bi-invariant convolution 
kernel of the operator $\cos(v\sqrt{-\Delta})$
whose spherical Fourier transform is given by
$\widetilde{\Phi}_{v}(\lambda)
=\cos(v\sqrt{|\lambda|^2+|\rho|^2})$.
Then $\Phi_{v}(x)$ solves the following Cauchy
problem
\begin{align*}
    \begin{cases}
        \partial_{v}^{2}\,U(v,x)-\Delta_{x}\,U(v,x)=0, \\[5pt]
        U(0,x) = \delta_{0}(x),\
        \partial_{v}|_{v=0} U(v,x)=0.
    \end{cases}
\end{align*}
We are looking for the asymptotic expansion of the kernel $\Phi_{v}$.
Recall that $J$ denotes the Jacobian of the exponential map 
from $\mathfrak{p}$ equipped with the Lebesgue measure to
$\mathbb{X}$ equipped with the Riemannian measure. It satisfies
\begin{align*}
    J(H)^{-\frac{1}{2}}
    &= \prod_{\alpha\in\Sigma^{+}}
    \Big( \frac{\langle\alpha,H\rangle}
    {\sinh\langle\alpha,H\rangle} \Big)^{\frac{m_{\alpha}}{2}}\\
    &\asymp \Big\lbrace\prod_{\alpha\in\Sigma^{+}}
    (1+\langle\alpha,H\rangle)^{\frac{m_{\alpha}}{2}}
    \Big\rbrace
    e^{-\langle\rho,H\rangle}
    \quad \forall H\in\overline{\mathfrak{a}^{+}}.
\end{align*}

Let $f$ be a $K$-bi-invariant function on G, then $f$ may also be considered as an $\Ad{K}$-invariant function on $\mathfrak{p}$ or a $W$-invariant function on $\mathfrak{a}$. 
Recall that $\DeltaP$ and $\DeltaA$ denote the usual Laplacian
on the Euclidean spaces $\mathfrak{p}$ and 
$\mathfrak{a}\subset\mathfrak{p}$.
The radial part of the Laplacian $\Delta$ on $\mathbb{X}$
is defined by
\begin{align*}
    \Deltarad f(H)
    = \DeltaA f(H) + \sum_{\alpha\in\Sigma^{+}}
       m_{\alpha} \coth\langle\alpha,H\rangle\partial_{\alpha} f(H)
    \quad \forall H\in\mathfrak{a}^{+},
\end{align*}
and that of $\DeltaP$ is given by
\begin{align*}
    \DeltaradP f(H)
    = \DeltaA f(H) + \sum_{\alpha\in\Sigma^{+}}
       m_{\alpha} \langle\alpha,H\rangle^{-1}
       \partial_{\alpha} f(H)
    \quad \forall H\in\mathfrak{a}^{+},
\end{align*}
see \cite[Propositions 3.9 and 3.11]{Hel2000}.
The following proposition provides a relation between 
$\Deltarad$ and $\DeltaradP$, it allows us to simplify
the computations about the parametrix.

\begin{proposition}\label{Annexe-deltaa to deltap}
Let  $f\in\mathcal{C}^{\infty}(\mathfrak{a})$ be a 
$W$-invariant function. Then
\begin{align*}
    \big[J(H)^{\frac{1}{2}} \circ\Deltarad\circ
    J(H)^{-\frac{1}{2}}\big]  f(H) 
    = \big[\DeltaradP + \omega(H)\big]  f(H) 
    \quad \forall H\in\mathfrak{a},
\end{align*}
where
\begin{align*}
    \omega(H) 
    &=\sum_{\alpha\in\Sigma^{+}}
        \frac{m_{\alpha}}{2} 
        \Big(\frac{m_{\alpha}}{2}-1\Big) |\alpha|^2 
        \Big\lbrace \frac{1}{\langle\alpha,H\rangle^{2}}
        -\frac{1}{\sinh^2\langle\alpha,H\rangle} 
        \Big\rbrace\\[5pt]
    &+\sum_{\substack{\alpha\in\Sigma^{+}\\ 
        \textnormal{s.t.}\ 2\alpha\in \Sigma^{+}}}
        \frac{m_{\alpha}m_{2\alpha}}{2} |\alpha|^2
        \Big( \frac{m_{\alpha}}{2}-1\Big) |\alpha|^2 
        \Big\lbrace
        \frac{1}{\langle\alpha,H\rangle^{2}}
        -\frac{1}{\sinh^2\langle\alpha,H\rangle} 
        \Big\rbrace -|\rho|^2
\end{align*}
is a smooth $W$-invariant function, which is uniformly bounded
together with all its derivatives.
\end{proposition}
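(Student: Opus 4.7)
The plan is to prove this identity by direct computation of the conjugation $J^{1/2}\circ\Deltarad\circ J^{-1/2}$ on $W$-invariant functions. I would split the radial Laplacian into its second-order part $\DeltaA$ and its first-order drift $\sum_{\alpha\in\Sigma^{+}}m_{\alpha}\coth\langle\alpha,H\rangle\,\partial_{\alpha}$, and conjugate each piece by $J^{-1/2}$ using the standard Leibniz formula
\begin{align*}
\phi^{-1}\DeltaA(\phi f)=\DeltaA f+2\,\nablaA\log\phi\cdot\nablaA f+\phi^{-1}(\DeltaA\phi)f
\end{align*}
(with $\phi=J^{-1/2}$) and the elementary identity $J^{1/2}\circ\partial_{\alpha}\circ J^{-1/2}=\partial_{\alpha}-\tfrac{1}{2}\partial_{\alpha}\log J$. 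The explicit factorization $\log J(H)=\sum_{\beta\in\Sigma^{+}}m_{\beta}\log\bigl(\sinh\langle\beta,H\rangle/\langle\beta,H\rangle\bigr)$ then lets me read off
\begin{align*}
\nablaA\log J=\sum_{\beta\in\Sigma^{+}}m_{\beta}\bigl(\coth\langle\beta,H\rangle-\langle\beta,H\rangle^{-1}\bigr)\beta\quad\text{and}\quad\DeltaA\log J=\sum_{\beta\in\Sigma^{+}}m_{\beta}|\beta|^{2}\bigl(\langle\beta,H\rangle^{-2}-\sinh^{-2}\langle\beta,H\rangle\bigr).
\end{align*}

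For the first-order part of the conjugated operator, the two $\coth$-sums cancel exactly:
\begin{align*}
\sum_{\alpha\in\Sigma^{+}}m_{\alpha}\coth\langle\alpha,H\rangle\,\partial_{\alpha}-\nablaA\log J\cdot\nablaA=\sum_{\alpha\in\Sigma^{+}}\frac{m_{\alpha}}{\langle\alpha,H\rangle}\partial_{\alpha},
\end{align*}
so that together with the surviving $\DeltaA$ one recovers $\DeltaradP$. All remaining contributions are multiplications by smooth functions and yield the zero-order potential
\begin{align*}
\omega(H)=\tfrac{1}{4}\bigl|\nablaA\log J\bigr|^{2}-\tfrac{1}{2}\DeltaA\log J-\tfrac{1}{2}\sum_{\alpha\in\Sigma^{+}}m_{\alpha}\coth\langle\alpha,H\rangle\,\partial_{\alpha}\log J.
\end{align*}

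Expanding the quadratic $|\nablaA\log J|^{2}$ and the last double sum produces terms indexed by pairs $(\alpha,\beta)\in(\Sigma^{+})^{2}$ of schematic form $m_{\alpha}m_{\beta}\langle\alpha,\beta\rangle\,(\coth\langle\alpha,H\rangle-\langle\alpha,H\rangle^{-1})(\coth\langle\beta,H\rangle-\langle\beta,H\rangle^{-1})$, while $\DeltaA\log J$ contributes only a single sum in $\alpha$ involving $\langle\alpha,H\rangle^{-2}-\sinh^{-2}\langle\alpha,H\rangle$. The diagonal contributions from $\alpha=\beta$, combined with these $\sinh^{-2}$-corrections via the identity $\coth^{2}-1=\sinh^{-2}$, recombine into $\sum_{\alpha\in\Sigma^{+}}\tfrac{m_{\alpha}}{2}\bigl(\tfrac{m_{\alpha}}{2}-1\bigr)|\alpha|^{2}\bigl\{\langle\alpha,H\rangle^{-2}-\sinh^{-2}\langle\alpha,H\rangle\bigr\}$, matching the first sum of the claim; the second sum arises analogously from pairs $(\alpha,2\alpha)$ whenever $2\alpha\in\Sigma^{+}$. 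The main obstacle is to show that the remaining off-diagonal cross-terms with $\alpha\neq\beta$, $\beta\neq 2\alpha$ and $\alpha\neq 2\beta$ collapse to the pure constant $-|\rho|^{2}$. This requires Berezin-type trigonometric identities for products $\coth\langle\alpha,H\rangle\coth\langle\beta,H\rangle$ on the root system (where the antisymmetric $W$-orbit sums vanish after symmetrization), and is ultimately pinned down by the asymptotic value $\omega(\infty)=\tfrac{1}{4}|2\rho|^{2}-\sum_{\alpha\in\Sigma^{+}}m_{\alpha}\langle\alpha,\rho\rangle=|\rho|^{2}-2|\rho|^{2}=-|\rho|^{2}$, where I use $\sum_{\beta\in\Sigma^{+}}m_{\beta}\beta=2\rho$.

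Finally, each building block $\coth\langle\alpha,H\rangle-\langle\alpha,H\rangle^{-1}$ and $\langle\alpha,H\rangle^{-2}-\sinh^{-2}\langle\alpha,H\rangle$ is a smooth bounded even function on $\mathbb{R}$ with a removable singularity at the origin, so $\omega$ extends smoothly across every root hyperplane and is bounded on $\mathfrak{a}$ together with each of its derivatives; its $W$-invariance is inherited from that of $J$ and of the sum over $\Sigma^{+}$.
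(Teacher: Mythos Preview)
Your proof follows essentially the same route as the paper's: expand $J^{1/2}\Deltarad(J^{-1/2}f)$ by the Leibniz rule, match the first-order drift to $\DeltaradP$ via $J^{1/2}\nablaA J^{-1/2}=\sum_{\alpha}\tfrac{m_\alpha}{2}\bigl(\langle\alpha,H\rangle^{-1}-\coth\langle\alpha,H\rangle\bigr)\alpha$, and then compute the zero-order potential $J^{1/2}\,\Deltarad J^{-1/2}$ by separating diagonal root pairs from off-diagonal ones. What you call ``Berezin-type trigonometric identities'' is precisely what the paper isolates as a \emph{Cancellations lemma} (citing Hall--Stenzel), namely
\[
\sum_{\substack{\alpha,\beta\in\Sigma^{+}\\ \mathbb{R}\alpha\neq\mathbb{R}\beta}} m_{\alpha}m_{\beta}\,\frac{\langle\alpha,\beta\rangle}{\langle\alpha,H\rangle\langle\beta,H\rangle}=0
\quad\text{and}\quad
\sum_{\substack{\alpha,\beta\in\Sigma^{+}\\ \mathbb{R}\alpha\neq\mathbb{R}\beta}} m_{\alpha}m_{\beta}\,\langle\alpha,\beta\rangle\bigl(\coth\langle\alpha,H\rangle\coth\langle\beta,H\rangle-1\bigr)=0;
\]
your asymptotic check $\omega(\infty)=-|\rho|^{2}$ is a clean way to read off the constant, but keep in mind it does not by itself establish that the off-diagonal contribution is $H$-independent --- for that you still need the two identities above, which the paper quotes rather than proves.
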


\begin{proof}
Notice that
\begin{align*}
    \big[J(H)^{\frac{1}{2}}\circ\Deltarad\circ
    J(H)^{-\frac{1}{2}}\big]f(H)
    =J(H)^{\frac{1}{2}}
        \big(\Deltarad J^{-\frac{1}{2}}\big)(H) f(H)&\\
        + \underbrace{\Deltarad f(H)
        + 2 J(H)^{\frac{1}{2}}
            \big(\nablaA J^{-\frac{1}{2}}\big)(H)
            \cdot \nablaA f(H)}_{\DeltaradP f(H)}&\,,
\end{align*}
since
\begin{align*}
    J(H)^{\frac{1}{2}}
    \big(\nablaA J^{-\frac{1}{2}}\big)(H)
    = \sum_{\alpha\in\Sigma^{+}} \frac{m_{\alpha}}{2}
        \Big\lbrace 
        \frac{1}{\langle\alpha,H\rangle}
        -\coth\langle\alpha,H\rangle
        \Big\rbrace \alpha.
\end{align*}
We deduce from the next lemma that
\begin{align*}
    J(H)^{\frac{1}{2}}
    \big(\Deltarad J^{-\frac{1}{2}}\big)(H)
    = \omega(H) \quad \forall H\in\mathfrak{a},
\end{align*}
and this concludes the proof.
\end{proof}

\begin{lemma}[Cancellations]
The following equations hold for all $H\in\mathfrak{a}$:
\begin{align*}
    \sum_{\alpha,\beta\in\Sigma^{+},\
        \mathbb{R}\alpha\neq\mathbb{R}\beta}
        m_{\alpha}m_{\beta}
        \frac{\langle\alpha,\beta\rangle}
        {\langle\alpha,H\rangle\langle\beta,H\rangle}
        =0
\end{align*}
\begin{align*}
    \sum_{\alpha,\beta\in\Sigma^{+},\
        \mathbb{R}\alpha\neq\mathbb{R}\beta}
        m_{\alpha}m_{\beta} 
        \langle\alpha,\beta\rangle
        \big(\coth\langle\alpha,H\rangle \coth\langle\beta,H\rangle-1\big)=0
\end{align*}
\end{lemma}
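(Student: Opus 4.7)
The plan is to prove both identities by reducing to the rank-$2$ case and then verifying them for each rank-$2$ root subsystem. For any pair $(\alpha,\beta)\in\Sigma^+\times\Sigma^+$ with $\mathbb{R}\alpha\neq\mathbb{R}\beta$, each summand depends on $H$ only through the pairings $\langle\alpha,H\rangle$ and $\langle\beta,H\rangle$, that is, only through the orthogonal projection of $H$ onto the plane $V_{\alpha\beta}:=\mathbb{R}\alpha+\mathbb{R}\beta$. Moreover $\Sigma\cap V_{\alpha\beta}$ is itself a (possibly non-reduced) rank-$2$ root subsystem of $\Sigma$. Partitioning each sum by the $2$-plane spanned by its pair reduces both identities to same-form identities in an arbitrary rank-$2$ root system.

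By the classification, the rank-$2$ cases are $A_1\times A_1$, $A_2$, $B_2=C_2$, $G_2$ and the non-reduced $BC_2$. The case $A_1\times A_1$ is immediate since $\langle\alpha,\beta\rangle=0$ for every cross-pair. In the other cases I would pick explicit coordinates for the positive roots together with their inner products. For the first identity, clearing the common denominator $\prod_{\gamma\in\Sigma^+}\langle\gamma,H\rangle$ reduces the statement to a polynomial identity: for $A_2$, with $u=\langle\alpha_1,H\rangle$, $v=\langle\alpha_2,H\rangle$ and $\langle\alpha_1+\alpha_2,H\rangle=u+v$, the sum (after factoring the multiplicities and the squared norms) becomes proportional to $-(u+v)+v+u=0$, and the same bookkeeping succeeds for $B_2$ and $G_2$; in $BC_2$ the pairs $\{\alpha,2\alpha\}$ are excluded from the sum because $\mathbb{R}\alpha=\mathbb{R}(2\alpha)$.

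For the second identity I would use the hyperbolic addition formula
\[
\coth u\,\coth v - 1 \;=\; \frac{\cosh(u-v)}{\sinh u\,\sinh v}
\]
to rewrite each summand, and then multiply through by the common denominator $\prod_{\gamma\in\Sigma^+}\sinh\langle\gamma,H\rangle$. The vanishing of the resulting numerator is again verified case by case in each rank-$2$ subsystem by expanding $\cosh(u-v)=\cosh u\cosh v-\sinh u\sinh v$ and collecting terms; the structural reason behind the cancellation is the Weyl-invariance of the rank-$2$ subsystem combined with the product structure of the summands. An alternative route is to derive both identities simultaneously from the known smoothness of $J(H)^{1/2}\circ\Deltarad\circ J(H)^{-1/2}$ as a differential operator: comparing the singular parts along each wall $\langle\alpha,H\rangle=0$ in \cref{Annexe-deltaa to deltap} forces the off-diagonal singular cross-terms to cancel, which is exactly the content of (i) and (ii).

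The main obstacle is the bookkeeping in the rank-$2$ verification of the second identity for $B_2$, $G_2$ and $BC_2$, where the interplay between short and long roots carrying distinct multiplicities must be tracked carefully. No individual case is deep, however: each ultimately reduces to elementary manipulations of hyperbolic identities applied to the finitely many roots of the subsystem, and both lemma identities correspond precisely to the cancellations required in \cref{Annexe-deltaa to deltap} to absorb all singular off-diagonal cross-terms, leaving the smooth potential $\omega(H)$ together with the constant contribution $-|\rho|^2$.
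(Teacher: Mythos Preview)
The paper does not actually prove this lemma: it simply refers to the appendix of \cite{HaSt2003} for a detailed proof of this ``folkloric'' result. Your outline --- partition the sum by the rank-$2$ subsystems $\Sigma\cap V_{\alpha\beta}$ and verify each case by direct computation --- is the standard approach and is essentially what one finds in such references; your $A_2$ computation and the hyperbolic identity $\coth u\,\coth v-1=\cosh(u-v)/(\sinh u\,\sinh v)$ are both correct.

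Two caveats. First, the alternative route you mention (deducing the cancellations from the smoothness of $J^{1/2}\circ\Deltarad\circ J^{-1/2}$) is circular in the paper's logical order, since \cref{Annexe-deltaa to deltap} is proved \emph{using} this lemma; if you want to argue that way you must supply an independent source for the smoothness. Second, what you have written is a sketch rather than a proof: the $B_2$, $G_2$ and especially $BC_2$ verifications --- where one must also include the cross-terms between a doubled root $2\alpha$ and roots $\beta\notin\mathbb{R}\alpha$, and track distinct short/long multiplicities --- are precisely where the work lies, and you have not carried them out. They are routine (each reduces to a few lines after your $\cosh(u-v)$ rewriting), but until they are written down the argument is incomplete.
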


\begin{proof}
See \cite[Appendix]{HaSt2003} for a detailed proof of this "\textit{folklore}" result.
\end{proof}
\vspace{10pt}

Recall that $\lbrace{R_{+}^{z}\,|\,z\in\mathbb{C}}\rbrace$
denotes the analytic family of Riesz distributions 
on $\mathbb{R}$ defined by
\begin{align*}
    R_{+}^{z}(r) =
    \begin{cases}
    \Gamma (z)^{-1} r^{z-1}
    &\textnormal{if \,} r>0, \\[5pt]
    0 &\textnormal{if \,} r\le0.
    \end{cases}
\end{align*}
Consider the asymptotic expansion
\begin{align}\label{Appendix-Expansion}
    \Phi_{v}(\exp H)
    = J(H)^{-\frac{1}{2}}
        \sum_{k=0}^{+\infty} 4^{-k}\,
        |v|\, U_{k}(H)\,
        R_{+}^{k-\frac{d-1}{2}}(v^2-|H|^2)
\end{align}
where $U_{0}$ is a constant such that 
$U_{0}\,J(H)^{-\frac{1}{2}}\,|v|\,
R_{+}^{-\frac{d-1}{2}}(v^2-|H|^2)
\rightarrow\delta_{0}(H)$ as $v\rightarrow0$ 
and $U_{k}\in\mathcal{C}^{\infty}(\mathfrak{p})$ are
smooth $\Ad{K}$-invariant functions.
By expanding
\begin{align*}
    0=&\ J(H)^{\frac{1}{2}}
        \big[\partial_{v}^{2}-\Deltarad]
        \Phi_{v}(\exp H)\\[5pt]
    =&\ \sum_{k=0}^{+\infty} 4^{-k}\,
        \big[\partial_{v}^{2}-\DeltaradP-\omega(H)\big]
        \Big\lbrace
        |v|\, U_{k}(H)\,
        R_{+}^{k-\frac{d-1}{2}}(v^2-|H|^2)
        \Big\rbrace,
\end{align*}
we deduce
\begin{align}\label{Appendix-incduction}
    \big[(k+1)+\partial_{H}\big] U_{k+1}(H)
    = \big[\DeltaradP + \omega(H)\big] U_{k}(H),
\end{align}
for every $k\in\mathbb{N}$.
In other words, 
\begin{align}\label{Appendix-integral induction}
    U_{k+1}(H)
    = \int_{0}^{1}ds\,s^{k}\,
        \big[\DeltaradP + \omega(sH)\big] U_{k}(sH).
\end{align}
As $\omega$ and all its derivatives are uniformly bounded,
we obtain
\begin{align}\label{Appendix-estimate Uk}
    \nablaP^{n} U_{k} = O(1)
\end{align}
for any $k,n\in\mathbb{N}$.\\

Next, by resuming the proof of \cite[Proposition 27]{Ber1977} 
with our asymptotic expansion \eqref{Appendix-Expansion}, 
we deduce that the remainder of the truncated expansion 
\begin{equation}\label{Appendix-Truncated Expansion}\begin{aligned}
    \Phi_{v}(\exp H)
    &= J(H)^{-\frac{1}{2}}
        \sum_{k=0}^{N} 4^{-k}\,
        |v|\, U_{k}(H)\,
        R_{+}^{k-\frac{d-1}{2}}(v^2-|H|^2)\\
        &+ E_{N}(v,\exp{H}) 
\end{aligned}\end{equation}
is a solution to the inhomogeneous Cauchy problem
\begin{align*}
    \begin{cases}
        \big[\partial_{v}^{2}-\Deltarad \big]
        E_{N}(v,\exp{H}) 
        = J(H)^{-\frac{1}{2}}
        \widetilde{U}_{N}(v,H), \\[5pt]
         \lim_{v \rightarrow 0} E_{N}(v,\exp{H}) = 0,\
         \lim_{v \rightarrow 0} 
         \frac{\partial E_N}{\partial v}(v,\exp{H}) = 0,
    \end{cases}
\end{align*}
where
$\widetilde{U}_{N}(v,H) 
= - 4^{-N}\,|v|\,U_{N}(H)\,
R_{+}^{N-\frac{d-1}{2}}(v^2-|H|^2)$.
Hence, by Duhamel's formula
\begin{align*}
    E_{N}(v,\exp H)
    = \int_{0}^{v}du\,
        {\textstyle
        \frac{\sin(v-u)\sqrt{-\Deltarad}}
        {\sqrt{-\Deltarad}}}
    \lbrace
    J(H)^{-\frac{1}{2}}\,\widetilde{U}_{N}(u,H)
    \rbrace.
\end{align*}
According to next lemma and by $L^2$ conservation, we have
\begin{align*}
     |E_{N}(v,\exp H)|
     \lesssim&\ 
        e^{-\langle \rho,H \rangle}\,
        \|E_{N}(v,\cdot)\|_{H^{2\sigma+1}(\mathbb{X})}
        \\[5pt]
     \lesssim&\ 
        e^{-\langle \rho,H \rangle}\, 
        \int_{0}^{v} du\, 
        \|\widetilde{U}_{N}(u,\cdot)J^{-\frac{1}{2}}
        \|_{H^{2\sigma}(\mathbb{X})}
\end{align*}
provided that $2\sigma+1>\frac{d}{2}$, and
\begin{align*}
    \|\widetilde{U}_{N}(u,\cdot) J^{-\frac{1}{2}}
    \|_{H^{2\sigma}(\mathbb{X})}^2
    =&\ \|\Delta^{\sigma} 
        \lbrace 
        \widetilde{U}_{N}(u,\cdot) J^{-\frac{1}{2}}
        \rbrace
        \|_{L^{2}(\mathbb{X)}}^2\\[5pt]
    =&\ \const
        \int_{\mathfrak{p}} dX\,
        \big|J(X)^{\frac{1}{2}}(\Deltarad)^{\sigma} 
        \lbrace 
        J(X)^{-\frac{1}{2}} \widetilde{U}_{N}(u,X)
        \rbrace \big|^{2} \\[5pt]
    =&\ \const\, \int_{\mathfrak{p}} dX\
        \big| [\DeltaradP+\omega(X)]^{\sigma}
        (\widetilde{U}_{N}(u,X)) \big|^{2}\\[5pt]
    \lesssim&\ 
        u^{2}\, \sum_{j=0}^{2\sigma}
        \int_{\lbrace{X}\in\mathfrak{p}\,|\,|X|<u\rbrace}
        dX\ \big|
        \nablaP^{j} (u^2-|X|^2)^{N-\frac{d+1}{2}}\big|^{2}
        \\[5pt]
    \lesssim&\ (1+u)^{4N-d},
\end{align*}
since $\omega$ and $U_{N}$, together with all their derivatives 
are uniformly bounded.
Here we assume $N>\frac{d+1}{2}+2\sigma$ to avoid
possible singularities on the sphere $|X|=u$. 
We may set $2\sigma=\big[\frac{d+2}{4}\big]$ 
and $N>d+1$.
Finally, we obtain
\begin{align*}
    |E_{N}(v,\exp H)| 
    \lesssim e^{-\langle \rho,H \rangle}
    \int_{0}^{v} du\ (1+u)^{2N-\frac{d}{2}}
    \lesssim 
    (1+v)^{2N-\frac{d}{2}+1} e^{-\langle \rho,H \rangle}.
\end{align*}

\begin{lemma}[Sobolev embedding theorem for 
$K$-bi-invariant functions on $\mathbb{X}$]
Let $\sigma>\frac{d}{2}$ be an integer. Then
\begin{align*}
    | f(\exp H) | 
    \lesssim e^{-\langle \rho,H \rangle} 
    \|f\|_{H^{\sigma}(\mathbb{X})}
    \quad\forall\,H\in\frakACL
\end{align*}
for all $K$-bi-invariant functions 
$f\in{H}^{\sigma}(\mathbb{X})$.
\end{lemma}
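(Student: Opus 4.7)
My plan is to combine the spherical inversion formula with Cauchy--Schwarz in the Plancherel space. For $K$-bi-invariant $f$ we have
\begin{align*}
    f(\exp H) = C_{0}\int_{\mathfrak{a}}\varphi_{\lambda}(\exp H)\,\mathcal{H}f(\lambda)\,|\mathbf{c}(\lambda)|^{-2}\,d\lambda,
\end{align*}
and Cauchy--Schwarz with the Sobolev weight $(|\lambda|^{2}+|\rho|^{2})^{\sigma}$ against the Plancherel measure yields
\begin{align*}
    |f(\exp H)|^{2}\le C_{0}^{2}\,\mathcal{I}(H)\,\|f\|_{H^{\sigma}(\mathbb{X})}^{2},
    \qquad
    \mathcal{I}(H) := \int_{\mathfrak{a}}\frac{|\varphi_{\lambda}(\exp H)|^{2}}{(|\lambda|^{2}+|\rho|^{2})^{\sigma}}\,|\mathbf{c}(\lambda)|^{-2}\,d\lambda.
\end{align*}
The statement thus reduces to the \emph{uniform} bound $\mathcal{I}(H)\lesssim e^{-2\langle\rho,H\rangle}$ for all $H\in\overline{\mathfrak{a}^{+}}$.

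I would control $\mathcal{I}(H)$ by splitting at $|\lambda|=1$. In the high-frequency range $|\lambda|\ge1$, the leading behaviour of the Harish-Chandra expansion gives $|\varphi_{\lambda}(\exp H)|\lesssim|\mathbf{c}(\lambda)|\,e^{-\langle\rho,H\rangle}$ on regular elements; after squaring, the factor $|\mathbf{c}(\lambda)|^{2}$ exactly cancels the Plancherel density, and the contribution reduces to $e^{-2\langle\rho,H\rangle}\int_{|\lambda|\ge1}(1+|\lambda|^{2})^{-\sigma}\,d\lambda$, which is finite as soon as $\sigma>\ell/2$. In the low-frequency range $|\lambda|\le1$, the naive estimate $|\varphi_{\lambda}|\le\varphi_{0}$ is insufficient, because $\varphi_{0}(\exp H)^{2}$ carries the unwanted polynomial $\prod_{\alpha\in\Sigma_{r}^{+}}(1+\langle\alpha,H\rangle)^{2}$. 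I would replace it by a sharp Anker-type pointwise bound of the form
\begin{align*}
    |\varphi_{\lambda}(\exp H)|\lesssim e^{-\langle\rho,H\rangle}\prod_{\alpha\in\Sigma_{r}^{+}}(1+\langle\alpha,H\rangle)\,(1+\langle\alpha,H\rangle\,|\langle\alpha,\lambda\rangle|)^{-\frac{m_{\alpha}+m_{2\alpha}-1}{2}},
\end{align*}
which, combined with the local behaviour $|\mathbf{c}(\lambda)|^{-2}\asymp\prod_{\alpha}|\langle\alpha,\lambda\rangle|^{2}$ near the origin, lets the one-dimensional changes of variable $\mu_{\alpha}=\langle\alpha,H\rangle\langle\alpha,\lambda\rangle$ absorb the polynomial factor $\prod(1+\langle\alpha,H\rangle)^{2}$ into integrals uniformly bounded in $H$.

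The main obstacle is exactly this refined pointwise estimate on $\varphi_{\lambda}(\exp H)$. The trivial bound $|\varphi_{\lambda}|\le\varphi_{0}$ introduces the spurious polynomial growth $\prod_{\alpha\in\Sigma_{r}^{+}}(1+\langle\alpha,H\rangle)$ that must be eliminated in order to reach the clean $e^{-\langle\rho,H\rangle}$ decay, and the refinement must hold \emph{uniformly} in $(\lambda,H)\in\mathfrak{a}\times\overline{\mathfrak{a}^{+}}$, including near the walls of the Weyl chamber where the Harish-Chandra series ceases to be unconditionally convergent. The required sharper estimate is obtained by a stationary-phase analysis of the integral representation $\varphi_{\lambda}(\exp H)=\int_{K}e^{\langle i\lambda+\rho,A(k\exp H)\rangle}\,dk$, exploiting Kostant's convexity theorem and the fact that the phase $\langle\lambda,A(k\exp H)\rangle$ oscillates with frequency $\asymp|\lambda|\cdot|H|$ across the $K$-orbit. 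Once this refinement is in hand, the threshold $\sigma>d/2$ of the statement emerges naturally from adding the regularity constraints produced by the high- and low-frequency regimes.
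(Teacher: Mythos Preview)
The paper itself does not give a proof of this lemma; it simply cites \cite[Lemma~2.3]{Ank1992}. So there is no ``paper's own proof'' to compare against, only the reference.

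Your reduction via spherical inversion and Cauchy--Schwarz to the estimate $\mathcal{I}(H)\lesssim e^{-2\langle\rho,H\rangle}$ is the natural starting point, and is essentially how one would begin. However, the high-frequency step contains a genuine error. The bound $|\varphi_{\lambda}(\exp H)|\lesssim|\mathbf{c}(\lambda)|\,e^{-\langle\rho,H\rangle}$ is \emph{not} uniform in $H\in\overline{\mathfrak{a}^{+}}$: at $H=0$ one has $\varphi_{\lambda}(e)=1$ for all $\lambda$, whereas $|\mathbf{c}(\lambda)|\to0$ as $|\lambda|\to\infty$. This non-uniformity is not a technicality one can repair by restricting to ``regular elements'': it is precisely why your high-frequency argument yields the threshold $\sigma>\ell/2$ rather than the correct $\sigma>d/2$. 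The threshold $\sigma>d/2$ is sharp (it is already forced by the local Euclidean Sobolev embedding near the origin), and it must come from the high-frequency integral $\int_{|\lambda|\ge1}|\mathbf{c}(\lambda)|^{-2}(1+|\lambda|^{2})^{-\sigma}\,d\lambda$, using only $|\varphi_{\lambda}|\le1$ for small $H$. Any uniform bound on $\varphi_{\lambda}(\exp H)$ that you use must therefore degenerate to $O(1)$ as $H\to0$ and cannot carry a full factor $|\mathbf{c}(\lambda)|$ uniformly.

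The low-frequency step is, as you yourself say, the real obstacle, and the ``Anker-type'' pointwise bound you write down is asserted rather than proved; the precise exponents you propose do not obviously follow from stationary phase on the Harish-Chandra integral, and the uniformity near the walls of both $\overline{\mathfrak{a}^{+}}$ (in $H$) and of the singular set (in $\lambda$) is exactly what makes this delicate. In short, the skeleton is right but both halves of the frequency splitting, as written, have gaps: the high-frequency bound is false as stated, and the low-frequency bound is a conjecture. For a complete argument you should consult \cite{Ank1992} directly.
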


\begin{proof}
See \cite[Lemma 2.3]{Ank1992}.
\end{proof}
\vspace{10pt}
Notice that, for all $N>\frac{d}{2}$, we have
\begin{align*}
    \Big| J(H)^{-\frac{1}{2}}\!
        \sum_{k=[d/2]+1}^{N}\!4^{-k}\,|v|\,U_{k}(H)
        R_{+}^{k-\frac{d-1}{2}}(v^2-|H|^2)\Big|
    \lesssim (1+v)^{2N-\frac{d+\ell}{2}}\,
        e^{-\langle\rho,H\rangle}.
\end{align*}
Then we deduce the following corollary.

\begin{corollary}\label{Appendix-corollary}
The $K$-bi-invariant convolution kernel $\Phi_{v}$
has the asymptotic expansion
\begin{equation}\label{Appendix-final expansion}\begin{aligned}
    \Phi_{v}(\exp H)
    &= J(H)^{-\frac{1}{2}}
        \sum_{k=0}^{[d/2]} 4^{-k}\,|v|\,U_{k}(H)\,
        R_{+}^{k-\frac{d-1}{2}}(v^2-|H|^2)\\
    &+ E_{\Phi}(v,H),
\end{aligned}\end{equation}
where the remainder satisfies
\begin{align}\label{Appendix-final estimate}
    |E_{\Phi}(v,H)| \lesssim
    (1+v)^{3(\frac{d}{2}+1)} e^{-\langle\rho,H\rangle}
    \quad\forall\,H\in\frakACL.
\end{align}
\end{corollary}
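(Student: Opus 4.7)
The plan is to combine the truncated Hadamard expansion \eqref{Appendix-Truncated Expansion}, valid for any integer $N>d$, with the pointwise bound displayed immediately before the corollary, which controls the ``middle'' terms of index $[d/2]+1 \le k \le N$. The corollary will then follow by a single judicious choice of $N$ that makes the two competing polynomial exponents in $v$ match.

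Concretely, I would fix $N = d+1$, the smallest integer strictly greater than $d$. With this choice, the Duhamel/$L^{2}$-Sobolev argument already carried out in the appendix yields
\[
    |E_{N}(v,\exp H)|
    \lesssim (1+v)^{2N-\frac{d}{2}+1}\,e^{-\langle\rho,H\rangle}
    = (1+v)^{3(\frac{d}{2}+1)}\,e^{-\langle\rho,H\rangle},
\]
which already matches the target exponent in \eqref{Appendix-final estimate}.

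Next I would split the difference between the full truncated expansion at level $N$ and the shorter expansion claimed in the corollary, setting
\[
    E_{\Phi}(v,H)
    := J(H)^{-\frac{1}{2}} \sum_{k=[d/2]+1}^{N} 4^{-k}\,|v|\,U_{k}(H)\,
        R_{+}^{k-\frac{d-1}{2}}(v^2-|H|^2)
    + E_{N}(v,\exp H),
\]
so that \eqref{Appendix-Truncated Expansion} directly gives \eqref{Appendix-final expansion}. The first sum is estimated pointwise by the inequality displayed just before the corollary, producing a bound of order $(1+v)^{2N-\frac{d+\ell}{2}}\,e^{-\langle\rho,H\rangle}$. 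For $N = d+1$ this exponent equals $\tfrac{3d}{2}+2-\tfrac{\ell}{2}$, which is at most $3(\tfrac{d}{2}+1)$ because $\ell \ge 1$. Adding this to the bound on $E_{N}$ gives \eqref{Appendix-final estimate}.

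No genuine analytic obstacle remains, since all the real work — the inductive construction of the coefficients $U_{k}$ through \eqref{Appendix-incduction}, their uniform bounds \eqref{Appendix-estimate Uk}, the Duhamel control of $E_{N}$, and the tail bound on high-order terms — has already been established in the body of the appendix. The only delicate point is the bookkeeping: one must verify that a single value of $N$ can simultaneously dominate both the truncation error and the ``distributional-to-locally-integrable'' transition terms with $k > (d-1)/2$; the choice $N = d+1$ makes both contributions sit at the sharp polynomial order $(1+v)^{3(d/2+1)}$, which is precisely the content of the corollary.
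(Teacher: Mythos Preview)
Your proposal is correct and matches the paper's approach exactly: the corollary is deduced by combining the Duhamel/Sobolev bound on $E_{N}$ with the pointwise estimate on the intermediate terms $[d/2]+1\le k\le N$, and your choice $N=d+1$ is precisely the minimal integer satisfying the paper's constraint $N>d$, which makes the exponent $2N-\tfrac{d}{2}+1$ equal to $3(\tfrac{d}{2}+1)$. The only (harmless) slip is that the inequality $2N-\tfrac{d+\ell}{2}\le 3(\tfrac{d}{2}+1)$ holds for all $\ell\ge -2$, not just $\ell\ge1$, so the bound on the middle terms is never the bottleneck.
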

\vspace{10pt}

\section{Asymptotic expansion of the Poisson kernel}
\label{Appendix C}

The Hadamard parametrix described above provides an 
asymptotic development of the kernel of the truncated
Poisson operator
\begin{align*}
    A_{\tau}
    = \int_{-\infty}^{+\infty} dv\ 
    \chi_{T}(v) p_{\tau}^{\mathbb{R}}(v)
    \cos{(v\sqrt{-\Delta})}.
\end{align*}
Here $\tau=s-it$ with $s\in(0,1]$ and $t\in\mathbb{R}^{*}$,
\begin{align*}
    T=
    \begin{cases}
    \sqrt{2}
    &\textnormal{if \,} 0<|t|\le1,\\[5pt]
    \sqrt{2}|t|
    &\textnormal{if \,} |t|\ge1,
    \end{cases}
\end{align*}
$\chi:\mathbb{R}\rightarrow[0,1]$ is a smooth even
cut-off function such that $\chi=1$ on $[-1,1]$ and
$\supp\chi\subset[-2\sqrt{2},2\sqrt{2}]$,
$\chi_{T}(v)=\chi(\frac{v}{2T})$ is supported in 
$[-2\sqrt{2}T,2\sqrt{2}T]\subset(-3T,3T)$, and 
$p_{\tau}^{\mathbb{R}}(v) = \frac{1}{\pi} \frac{\tau}{\tau^2+v^2}$
is the Poisson kernel on $\mathbb{R}$ 
(with complex time $\tau$). Notice that $|\tau|\le{T}$.
By resuming and improving slightly \cite[Lemma 3.3]{CGM2001}, 
we deduce the following parametrix for the kernel of 
$A_{\tau}$.

\begin{proposition}\label{Appendix-prop atau}
The kernel $a_{\tau}$ of the operator $A_{\tau}$ is a
smooth $K$-bi-invariant function on $G$, which is supported
in the ball of radius $3T$ in $\mathbb{X}$. Moreover
\begin{align}
    a_{\tau}(\exp H) 
    &= \frac{\tau}{\pi}\,J(H)^{-\frac{1}{2}}\,
        \sum_{k=0}^{[d/2]} 4^{-k}\, U_{k}(H)\,
        \Gamma\Big(\frac{d+1}{2}-k\Big)\,
        (|H|^2+\tau^2)^{\frac{d+1}{2}-k}
        \nonumber\\
    &+ E(\tau,H)
\label{Appendix-atau expansion}\end{align}
where the remainder satisfies
\begin{align}\label{Appendix-atau error}
    |E(\tau,H)| 
    \lesssim |T|^{3(\frac{d}{2}+1)}\,
    (\log{T}-\log{s})\,e^{-\langle\rho,H\rangle}
    \quad\forall\,H\in\frakACL.
\end{align}
Here the coefficients $U_{k}$ are the same as in
\cref{Appendix-corollary} and are uniformly bounded.
\end{proposition}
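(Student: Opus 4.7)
The plan is to substitute the Hadamard parametrix from \cref{Appendix-corollary} directly into the integral defining $a_{\tau}$ and to evaluate each main-order term as a classical beta-type integral, leaving an explicit remainder that I then bound. To begin, I would write
\[
    a_{\tau}(\exp H)
    =\int_{-\infty}^{+\infty}dv\,\chi_{T}(v)\,p_{\tau}^{\mathbb{R}}(v)\,\Phi_{v}(\exp H),
\]
which is $K$-bi-invariant and supported in $|H|\le 3T$ by finite propagation speed together with the support condition on $\chi_{T}$. Inserting \eqref{Appendix-final expansion} decomposes $a_{\tau}$ into $[d/2]+1$ explicit main terms plus an integral against $E_{\Phi}$.

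Next I would isolate each main term by writing $\chi_{T}=1-(1-\chi_{T})$, so that it becomes a full-line integral minus a tail supported in $|v|\ge 2T$. The full-line integral is the heart of the computation: by evenness and the change of variables $u=v^{2}-|H|^{2}$ it reduces to
\[
    \frac{\tau}{\pi}\int_{0}^{+\infty}\frac{R_{+}^{k-\frac{d-1}{2}}(u)}{u+|H|^{2}+\tau^{2}}\,du.
\]
I would evaluate this via the classical identity $\int_{0}^{\infty}u^{z-1}(a+u)^{-1}\,du=\Gamma(z)\Gamma(1-z)\,a^{z-1}$, extended by analytic continuation in $z$ to accommodate the distributional values of $R_{+}^{k-(d-1)/2}$ that arise when $d$ is odd. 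This gives $\frac{\tau}{\pi}\,\Gamma\!\big(\tfrac{d+1}{2}-k\big)\,(|H|^{2}+\tau^{2})^{k-\frac{d+1}{2}}$, which multiplied by $4^{-k}J(H)^{-1/2}U_{k}(H)$ reproduces the announced main sum.

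It then remains to control two error pieces. For the $E_{\Phi}$-contribution $\int\chi_{T}p_{\tau}^{\mathbb{R}}E_{\Phi}$, I would use \eqref{Appendix-final estimate} to pull out the polynomial factor $(1+|v|)^{3(d/2+1)}\lesssim T^{3(d/2+1)}$ and the exponential $e^{-\langle\rho,H\rangle}$, so that only $\int_{-3T}^{3T}|p_{\tau}^{\mathbb{R}}(v)|\,dv$ remains. This I would bound via the partial-fraction decomposition
\[
    p_{\tau}^{\mathbb{R}}(v)=\frac{1}{2\pi i}\Big(\frac{1}{(v-t)-is}-\frac{1}{(v+t)+is}\Big),
\]
which exhibits the logarithmic growth $\log T-\log s$ coming from the two peaks at $v=\pm t$. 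The tail piece $\int(1-\chi_{T})p_{\tau}^{\mathbb{R}}|v|R_{+}^{k-(d-1)/2}(v^{2}-|H|^{2})$, supported in $|v|\ge 2T$, would be controlled by $|p_{\tau}^{\mathbb{R}}(v)|\lesssim T/v^{2}$ together with the polynomial growth of the Riesz factor and the bound $J(H)^{-1/2}\lesssim e^{-\langle\rho,H\rangle}$; it is dominated by the first contribution.

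The main obstacle I anticipate is the sharp bookkeeping of the remainder: when $s$ is small and $|t|$ is large, $p_{\tau}^{\mathbb{R}}$ is sharply concentrated near $v=\pm t$, producing precisely the borderline logarithmic factor $\log T-\log s$ that appears in the statement. One must extract this factor without losing either the exponential decay $e^{-\langle\rho,H\rangle}$ or the polynomial factor $T^{3(d/2+1)}$ dictated by the Hadamard construction, and without creating an extra singularity from the distributional $R_{+}^{k-(d-1)/2}$ for odd $d$. The last point also explains why Hadamard's expansion is truncated exactly at $k=[d/2]$: beyond this index the Riesz distributions become genuine functions but the analytic continuation used in the main computation would have to be recast differently, and the truncation makes \cref{Appendix-corollary} directly applicable.
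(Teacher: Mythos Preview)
Your overall strategy matches the paper's: substitute the Hadamard parametrix, write $\chi_T=1-(1-\chi_T)$, and separate the remainder into the $E_\Phi$-piece plus the tail contributions from $(1-\chi_T)$. Your evaluation of the full-line integral by the beta identity with analytic continuation in $z$ is equivalent to what the paper does via the relation $R_+^{k-(d-1)/2}(v^2-|H|^2)=(-\partial/\partial(|H|^2))^{[d/2]-k}R_+^{1-\varepsilon}(v^2-|H|^2)$ together with the explicit formula \eqref{Appendix-lemma B2}; and your $E_\Phi$-estimate via partial fractions is the same computation as the paper's appeal to \eqref{Appendix-lemma B1}.

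There is, however, a genuine gap in your tail estimate. You propose to control
\[
\int (1-\chi_T(v))\,p_\tau^{\mathbb{R}}(v)\,|v|\,R_+^{k-\frac{d-1}{2}}(v^2-|H|^2)\,dv
\]
by combining $|p_\tau^{\mathbb{R}}(v)|\lesssim T/v^2$ with ``polynomial growth of the Riesz factor''. But for $k<(d-1)/2$ the Riesz factor is \emph{not} a function with polynomial growth: for odd $d$ it is a derivative of the Dirac mass on the light cone $|v|=|H|$, and for even $d$ it is a non-integrable homogeneous distribution. When $|H|\ge 2T$ the singular support $\{|v|=|H|\}$ lies inside the integration region $\{|v|\ge 2T\}$, and pointwise bounds on the test function do not control the pairing; even for $|H|<2T$ the resulting function $(v^2-|H|^2)^{k-(d+1)/2}$ blows up near $v=2T$ as $|H|\uparrow 2T$, so the bound is not uniform. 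The paper handles this by first shifting $v^2\mapsto v^2+|H|^2$ and then applying the same derivative identity as in the main term, obtaining
\[
\widetilde{I}_k(\tau,H)
=\frac{\tau}{\pi}\Big(-\frac{\partial}{\partial(|H|^2)}\Big)^{[d/2]-k}
\int_0^\infty d(v^2)\,\big\{\chi_T(\sqrt{v^2+|H|^2})-1\big\}\,\frac{R_+^{1-\varepsilon}(v^2)}{v^2+|H|^2+\tau^2}\,.
\]
Now all derivatives fall on the smooth factors $\chi_T(\sqrt{v^2+|H|^2})-1$ and $(v^2+|H|^2+\tau^2)^{-1}$, and one reads off $|\widetilde{I}_k|=O(T^{2k-d})$ uniformly in $H$ (the odd and even cases treated separately). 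Your argument goes through once you incorporate this step.
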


\begin{remark}
The proof of \cref{Appendix-prop atau} is similar to the
proof of Lemma 3.3 in \cite{CGM2001}. Notice that the latter
statement contains a minor error in the Gamma factor and
that our estimates contain an additional exponential decay,
which is crucial for the dispersive estimates.\\
\end{remark}

Let us state and reprove some technical results borrowed
from \cite{CGM2001}.

\begin{lemma}
Let $n\ge1$ and $\gamma \in \mathbb{R}_{+}$. Then
\begin{align}
    &|z|^{2\gamma-n} \int_{0}^{3T} dr\ 
    r^{n-1} |r^2+z^2|^{-\gamma}\,\asymp\nonumber\\
    &\asymp\,
    \begin{cases}
        \,(\frac{|z|}{\re{z}})^{\gamma-1}
        &\textnormal{if}\
        \gamma>1\ \textnormal{and}\ n<2\gamma,\\[5pt]
        \,(\frac{T}{|z|})^{n-2} 
        +\log(\frac{|z|}{\re{z}}) 
        &\textnormal{if}\
        \gamma=1\ \textnormal{and}\ n>2, \\[5pt]
        \,1+\log (\frac{T}{\re{z}})
        &\textnormal{if}\ 
        \gamma=1\ \textnormal{and}\ n=2, \\[5pt]
        \,1+\log(\frac{|z|}{\re{z}})
        &\textnormal{if}\
        \gamma=1\ \textnormal{and}\ n<2.
    \end{cases}
\label{Appendix-lemma B1}\end{align}
for every $z\in\mathbb{C}$ such that $\re{z}>0$
and $|z|\le{T}$.
\end{lemma}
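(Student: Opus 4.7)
My plan is to reduce to a normalized integral by rescaling and then to analyze it at its critical scale. Writing $z = |z|e^{i\theta}$ and substituting $r = |z|\rho$ transforms the left-hand side into $\int_{0}^{M} \rho^{n-1}|\rho^{2}+e^{2i\theta}|^{-\gamma}\,d\rho$ with $M = 3T/|z| \ge 3$. A direct expansion yields $|\rho^{2}+e^{2i\theta}|^{2} = (\rho^{2}+\cos 2\theta)^{2} + \sin^{2} 2\theta$. Setting $\delta = |\sin 2\theta|$, and in the case $\cos 2\theta \le 0$ also $\rho_{*} = \sqrt{-\cos 2\theta} \in [0,1]$, the integrand has magnitude $((\rho^{2}-\rho_{*}^{2})^{2}+\delta^{2})^{-\gamma/2}$ with the key identity $\rho_{*}^{4}+\delta^{2} = 1$ and $\delta \asymp \re z/|z|$; these two relations will govern all the asymptotics.

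In the case $\cos 2\theta \ge 0$, equivalently $|z|/\re z \le \sqrt{2}$, one has $|\rho^{2}+e^{2i\theta}| \asymp 1+\rho^{2}$ on all of $[0,M]$, and a routine integration produces the lemma immediately, since both $\log(|z|/\re z)$ and $(|z|/\re z)^{\gamma-1}$ are $\asymp 1$ in this regime. In the remaining case $\cos 2\theta < 0$, I will split the integral by the critical scale into an inner region $[0,\rho_{*}/2]$, a middle region $[\rho_{*}/2,2\rho_{*}]$, and an outer region $[2\rho_{*},M]$. On the inner region, the identity $\rho_{*}^{4}+\delta^{2}=1$ gives $(\rho^{2}-\rho_{*}^{2})^{2}+\delta^{2} \asymp 1$, so its contribution is $O(\rho_{*}^{n}) = O(1)$. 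On the outer region, $(\rho^{2}-\rho_{*}^{2})^{2}+\delta^{2} \asymp 1+\rho^{4}$, which supplies the $M^{n-2} \asymp (T/|z|)^{n-2}$ term precisely when $\gamma = 1$ and $n > 2$, the $\log M$ term when $\gamma = 1$ and $n = 2$, and $O(1)$ in the remaining cases.

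The middle region is the source of the logarithmic and $(|z|/\re z)^{\gamma-1}$ factors. After the substitution $v = (\rho-\rho_{*})\rho_{*}/\delta$, the contribution takes the form $\rho_{*}^{n-2}\delta^{1-\gamma}\int_{-\rho_{*}^{2}/(2\delta)}^{\rho_{*}^{2}/\delta}(v^{2}+1)^{-\gamma/2}\,dv$. Using the identity $\rho_{*}^{4}+\delta^{2}=1$, I distinguish two sub-regimes: if $\rho_{*}^{2} \gtrsim \delta$ then $\rho_{*} \asymp 1$ and the $v$-integral is of unit order for $\gamma > 1$, or of order $\log(\rho_{*}^{2}/\delta) \asymp \log(|z|/\re z)$ for $\gamma = 1$, producing respectively $\delta^{1-\gamma} \asymp (|z|/\re z)^{\gamma-1}$ and $\log(|z|/\re z)$; if instead $\rho_{*}^{2} \lesssim \delta$ then $\delta \asymp 1$, the $v$-range is bounded by a constant, and the contribution reduces to $O(\rho_{*}^{n}/\delta) = O(1)$, consistent with the fact that both $(|z|/\re z)^{\gamma-1}$ and $\log(|z|/\re z)$ are themselves $\asymp 1$ in this sub-regime.

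The main obstacle will be bookkeeping the interplay between the three spatial subregions and the two sub-regimes of $(\rho_{*},\delta)$, particularly extracting the sharp logarithmic thresholds appearing in the three $\gamma = 1$ cases. The matching lower bounds come from restricting the integration to the subregion where the integrand is largest. Collecting the three contributions and using $\delta \asymp \re z/|z|$ and $M \asymp T/|z|$ will then yield each of the four asymptotic relations stated in the lemma.
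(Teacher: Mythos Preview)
Your argument is correct, and it follows the same overall scheme as the paper: rescale by $|z|$ to obtain $\int_{0}^{M}\rho^{n-1}|\rho^{2}+e^{2i\theta}|^{-\gamma}\,d\rho$ with $M=3T/|z|\ge 3$, then split into three regions and isolate the near-singular part. The execution of the middle region is where you diverge. The paper uses the \emph{fixed} decomposition $[0,\tfrac12]\cup[\tfrac12,2]\cup[2,M]$ together with the algebraic identity
\[
|w^{2}+e^{2i\theta}|^{2}=(w^{2}-1)^{2}+4w^{2}\cos^{2}\theta,
\]
so that on $[\tfrac12,2]$, where $w\asymp1$, the substitution $u=w-1/w$ reduces the middle integral to $\int_{-3/2}^{3/2}(u^{2}+\cos^{2}\theta)^{-\gamma/2}\,du$, which is computed in one line and directly features $\cos\theta=\re z/|z|$. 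Your approach instead centers the middle region at the moving point $\rho_{*}=\sqrt{-\cos 2\theta}$ and works with the pair $(\rho_{*},\delta)$ subject to $\rho_{*}^{4}+\delta^{2}=1$; this is perfectly sound but forces you to split further into the sub-regimes $\rho_{*}^{2}\gtrsim\delta$ versus $\rho_{*}^{2}\lesssim\delta$ and to treat $\cos 2\theta\ge 0$ separately. The paper's fixed-window trick buys a shorter and more uniform argument (no case distinction on $\theta$, no sub-regimes), while your approach is more explicit about where the mass of the integrand sits and would generalize more readily if the phase were not exactly quadratic.
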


\begin{proof}
Write $z=|z|e^{i\theta}$ in polar coordinates, with
$\theta\in[-\frac{\pi}{2},\frac{\pi}{2}]$. 
By performing the change of variables $r=|z|w$, 
the left hand side of \eqref{Appendix-lemma B1} becomes
\begin{align*}
    I = \int_{0}^{\frac{3T}{|z|}} dw\
    w^{n-1} |w^2+ e^{i2\theta}|^{-\gamma}.
\end{align*}
Notice that $\frac{3T}{|z|}>2$ and that
\begin{align}\label{Appendix-dem B1 ineq}
    |w^2-1| 
    \le |w^2+ e^{i2\theta}|
    \le |w^2+1|.
\end{align}
Let us split up $I=I_{0}+I_{1}+I_{\infty}$ according to
\begin{align*}
    \int_{0}^{\frac{3T}{|z|}} dw\
    = \int_{0}^{\frac{1}{2}} dw\
    + \int_{\frac{1}{2}}^{2} dw\
    + \int_{2}^{\frac{3T}{|z|}} dw.
\end{align*}
The first and the last integrals are easily estimated.
According \eqref{Appendix-dem B1 ineq},
\begin{align*}
    \begin{cases}
    \frac{3}{4}\le|w^2+ e^{i2\theta}|\le\frac{5}{4}
    &\textnormal{if \,} 0<w\le\frac{1}{2},\\[5pt]
    \frac{3}{4}w^2\le|w^2+ e^{i2\theta}|\le\frac{5}{4}w^2
    &\textnormal{if \,} w\ge2,
    \end{cases}
\end{align*}
we deduce
\begin{align}\label{Appendix-B1 I0}
    I_{0} 
    = \int_{0}^{\frac{1}{2}} dw\
        w^{n-1} \asymp 1
\end{align}
and
\begin{align}\label{Appendix-B1 I infinity}
    I_{\infty} 
    = \int_{2}^{\frac{3T}{|z|}} dw\
        w^{n-2\gamma-1} 
    \asymp 
    \begin{cases}
    1
    &\textnormal{if \,} n<2\gamma,\\[5pt]
    1+\log\frac{T}{|z|}
    &\textnormal{if \,} n=2\gamma,\\[5pt]
    (\frac{T}{|z|})^{n-2\gamma}
    &\textnormal{if \,} n>2\gamma.
    \end{cases}
\end{align}
Let us turn to the remaining integral, where
$\frac{1}{2}\le{w}\le2$. In this case we use the following
improvement of \eqref{Appendix-dem B1 ineq}:
\begin{align*}
    |w^2+ e^{i2\theta}|^2
    &= w^2+1+2w^2\cos{2\theta}
    = (w^2-1)^{2}+4w^2\cos^{2}\theta\\
    &\asymp \big(w-\frac{1}{w}\big)^{2}+\cos^{2}\theta.
\end{align*}
By performing the change of variables $u=w-\frac{1}{w}$ 
and noticing that $\frac{du}{dw}=1+\frac{1}{w^2}\asymp1$, 
we get
\begin{align}
    I_{1} 
    &\asymp 
    \int_{-\frac{3}{2}}^{\frac{3}{2}} du\
        (u^2+\cos^2\theta)^{-\frac{\gamma}{2}}
    \asymp
    \int_{0}^{\frac{3}{2}} du\
        (u+\cos\theta)^{-\gamma}\nonumber\\[5pt]
    &\asymp
    \begin{cases}
        (\cos\theta)^{-\gamma-1}
        &\textnormal{if \,} \gamma>1,\\[5pt]
        1-\log(\cos\theta)
        &\textnormal{if \,} \gamma=1,\\[5pt]
        1
        &\textnormal{if \,} \gamma<1.
    \end{cases}\label{Appendix-B1 I1}
\end{align}
In conclusion, \eqref{Appendix-lemma B1} is obtained by
combining \eqref{Appendix-B1 I0}, \eqref{Appendix-B1 I infinity} and \eqref{Appendix-B1 I1}.
\end{proof}

\begin{lemma}
Let $z\in\mathbb{C}$ with $\re{z}>0$ and $u\in\mathbb{R}$. 
Then
\begin{align}\label{Appendix-lemma B2}
    \int_{0}^{+\infty} d(w^2)\
    R_{+}^{1-\varepsilon} (w^2-u^2)
    \frac{1}{\pi} \frac{\tau}{w^2+z^2} =
    \begin{cases}
        \frac{1}{\pi}\frac{z}{u^2+z^2}
        &\textnormal{if \,} \varepsilon=1,\\[5pt]
        \frac{1}{\sqrt{\pi}}
        \frac{z}{\sqrt{u^2+z^2}}
        &\textnormal{if \,} \varepsilon=\frac{1}{2}.
    \end{cases}
\end{align}
\end{lemma}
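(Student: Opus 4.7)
My plan is to treat the two cases $\varepsilon = 1$ and $\varepsilon = \tfrac{1}{2}$ directly by distributional/residue calculation, using standard facts about the Riesz family $\lbrace R_{+}^{z}\rbrace$ on $\mathbb{R}$. (I shall read the $\tau$ in the integrand as $z$, since the right-hand side is expressed in terms of $z$; this is clearly the intent, as the Poisson kernel on $\mathbb{R}$ at $\tau=z$ is $\frac{1}{\pi}\frac{z}{w^2+z^2}$.)

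For $\varepsilon=1$, I would use the analytic identity $R_{+}^{0}=\delta_{0}$ in the Riesz family. Substituting $r=w^2-u^2$ as the integration variable and noting that the measure $d(w^2)$ equals $dr$ reduces the left-hand side to the evaluation of the Poisson kernel $\frac{1}{\pi}\frac{z}{w^2+z^2}$ at $w^2=u^2$, which gives exactly $\frac{1}{\pi}\frac{z}{u^2+z^2}$.

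For $\varepsilon=\tfrac{1}{2}$, the Riesz distribution is the locally integrable function $R_{+}^{1/2}(r)=\Gamma(\tfrac{1}{2})^{-1}r^{-1/2}\mathbf{1}_{r>0}=\pi^{-1/2}r^{-1/2}\mathbf{1}_{r>0}$. After the substitution $r=w^2-u^2$ the left-hand side becomes
\begin{align*}
    \frac{z}{\pi^{3/2}}\int_{0}^{+\infty}dr\,r^{-1/2}\,\frac{1}{r+(u^2+z^2)}.
\end{align*}
The remaining step is the classical Beta-type identity
\begin{align*}
    \int_{0}^{+\infty}\frac{r^{-1/2}}{r+a}\,dr=\frac{\pi}{\sqrt{a}},
\end{align*}
which is valid for any $a\in\mathbb{C}\smallsetminus(-\infty,0]$ (with principal square root), proved via the substitution $r=a s$ and contour deformation, or equivalently via $B(\tfrac{1}{2},\tfrac{1}{2})=\pi$. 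Applied with $a=u^2+z^2$, this yields the announced value $\frac{1}{\sqrt{\pi}}\frac{z}{\sqrt{u^2+z^2}}$.

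The only subtle point is justifying the Beta identity for complex $a=u^2+z^2$. Since $\re z>0$, the point $u^2+z^2$ never lies on $(-\infty,0]$: its imaginary part $2(\re z)(\im z)$ vanishes only when $z$ is real positive, and in that case $u^2+z^2>0$. Thus the principal branch of $\sqrt{u^2+z^2}$ is unambiguous and both sides are holomorphic in $z$ on the half-plane $\re z>0$. I would therefore check the identity first for $z$ real positive (where the integrand is real and the Beta integral is classical) and extend by analyticity in $z$, which is the main technical obstacle but a routine one.
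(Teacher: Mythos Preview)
Your proof is correct and follows essentially the same approach as the paper: the case $\varepsilon=1$ via $R_{+}^{0}=\delta_{0}$, and the case $\varepsilon=\tfrac{1}{2}$ by reducing to a real Beta-type integral for $z>0$ and then invoking analytic continuation in $z$ on the half-plane $\re z>0$. Your justification that $u^{2}+z^{2}\notin(-\infty,0]$ is a nice addition that the paper leaves implicit, and your reading of $\tau$ as $z$ in the integrand is indeed the intended interpretation.
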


\begin{proof}
The case $\varepsilon=1$ follows immediately from the fact
the distribution $R_{+}^{0}$ is equal to the Dirac measure
at the origin. In the case $\varepsilon=\frac{1}{2}$, the
formula is proved first for $z>0$ and then extended
straightforwardly by analytic continuation to all
$z\in\mathbb{C}$ with $\re{z}>0$. Specifically, the left
hand side of \eqref{Appendix-lemma B2} becomes
\begin{align*}
    \pi^{-\frac{3}{2}} \int_{0}^{+\infty}
    \frac{d(w^2)}{w}
    \frac{z}{w^2+u^2+z^2}
    = 
    \underbrace{
    2 \pi^{-\frac{3}{2}} \int_{0}^{+\infty}
    \frac{dr}{r^2+1}
    }_{\frac{1}{\sqrt{\pi}}}
    \frac{z}{\sqrt{u^2+z^2}}
\end{align*}
after performing the change of variables 
$w=\sqrt{u^2+z^2}r$.
\end{proof}
\vspace{10pt}

\begin{proof}[Proof of \cref{Appendix-prop atau}]
According to the asymptotic expansion 
\eqref{Appendix-final expansion}, we write
\begin{align*}
    a_{\tau} (\exp{H})
    =   J(H)^{-\frac{1}{2}}\,
        \sum_{k=0}^{[d/2]}\,4^{-k}\, 
        U_{k}(H)\,I_{k}(\tau,H)
        + E(\tau,H)
\end{align*}
with
\begin{align*}
    I_{k}(\tau,H)
    = \int_{0}^{+\infty}d(v^2)\,
        p_{\tau}^{\mathbb{R}}(v)\,
        R_{+}^{k-\frac{d-1}{2}}(v^2-|H|^2)
\end{align*}
and
\begin{align*}
    E(\tau,H)
    &=J(H)^{-\frac{1}{2}} 
        \sum_{k=0}^{[d/2]}\,4^{-k}\,U_{k}(H)\,\\
     &\times\int_{0}^{+\infty} d(v^2)\,
            \lbrace\chi_{T}(v)-1\rbrace\,
            p_{\tau}^{\mathbb{R}}(v)\,
            R_{+}^{k-\frac{d-1}{2}}(v^2-|H|^2)\\[5pt]
    &+\ 2\int_{0}^{+\infty} dv\,
        \chi_{T}(v)\,p_{\tau}^{\mathbb{R}}(v)\,
        E_{\Phi}(v,H)
\end{align*}
Let $\varepsilon=1$ if $d$ is even and $\varepsilon=\frac{1}{2}$ 
if $d$ odd. Then
\begin{align*}
    I_{k}(\tau,H) 
    =
    \textstyle{\big(-\frac{\partial}
    {\partial(|H|^2)}\big)^{[\frac{d}{2}]-k}}
    \int_{0}^{+\infty} d(v^2)\,
        p_{\tau}^{\mathbb{R}}(v)\,
        R_{+}^{1-\varepsilon}(v^2-|H|^2)
\end{align*}
where
\begin{align*}
    \int_{0}^{+\infty} d(v^2)\,
        p_{\tau}^{\mathbb{R}}(v)\,
        R_{+}^{1-\varepsilon}(v^2-|H|^2)
    =
    \begin{cases}
        \frac{1}{\pi}\,\frac{\tau}{|H|^2+\tau^2}
        &\textnormal{if \,} \varepsilon=1,\\[5pt]
        \frac{1}{\sqrt{\pi}}\,
        \frac{\tau}{\sqrt{|H|^2+\tau^2}}
        &\textnormal{if \,} \varepsilon=\frac{1}{2},
    \end{cases}    
\end{align*}
according to \eqref{Appendix-lemma B2}. Then we obtain
\begin{align*}
    I_{k}(\tau,H) = 
    \frac{\tau}{\pi}\,
    \textstyle{\frac{\Gamma(\frac{d+1}{2}-k)}
    {(|H|^2+\tau^2)^{\frac{d+1}{2}-k}}}.
\end{align*}
Next, we estimate the remainder $E(\tau,H)$ whose second
part is easily handled. By using 
\eqref{Appendix-final estimate}, we have
\begin{align*}
    |E_{2}(\tau,H)|
    \le&\
    2\int_{0}^{+\infty} dv\,
        \chi_{T}(v)\,|p_{\tau}^{\mathbb{R}}(v)|\,
        |E_{\Phi}(v,H)|\\[5pt]
    \lesssim&\
        e^{-\langle\rho,H\rangle}
        \int_{0}^{3T} dv\,
        \frac{|\tau|}{|v^2+\tau^2|}\,(1+v^{3(\frac{d}{2}+1)}).
\end{align*}
where
\begin{align*}
    |\tau| \int_{0}^{3T} dv\,
    |v^2+\tau^2|^{-1}
    \lesssim 
    1+\textstyle{\log\frac{|\tau|}{\re{\tau}}}
\end{align*}
and
\begin{align*}
    |\tau| \int_{0}^{3T} dv\,
    |v^2+\tau^2|^{-1}\,v^{3(\frac{d}{2}+1)}
    \lesssim 
    \textstyle{|\tau|^{3(\frac{d}{2}+1)}\,
    \big\lbrace\big(\frac{T}{|\tau|}\big)^{\frac{3d+1}{2}}
    +\log\frac{|\tau|}{\re{\tau}}\big\rbrace}
\end{align*}
according to the formulas in \eqref{Appendix-lemma B1}. 
We deduce
\begin{align}\label{Appendix-estim E2}
    |E_{2}(\tau,H)|
    \lesssim T^{3(\frac{d}{2}+1)}(\log{T}-\log{s})
    e^{-\langle\rho,H\rangle}.
\end{align}
It remains to estimate
\begin{align*}
    E_{1}(\tau,H)
    &=J(H)^{-\frac{1}{2}} 
        \sum_{k=0}^{[d/2]}\,4^{-k}\,U_{k}(H)\\
    &\times\underbrace{
        \int_{0}^{+\infty} d(v^2)\,
            \lbrace\chi_{T}(v)-1\rbrace
            p_{\tau}^{\mathbb{R}}(v)\,
            R_{+}^{k-\frac{d-1}{2}}(v^2-|H|^2)
        }_{\widetilde{I}_{k}(\tau,H)}.
\end{align*}
By repeating the previous calculations for $I_{k}$,
\begin{align*}
    \widetilde{I}(\tau,H)
    = \frac{\tau}{\pi}
        &\textstyle{\big(-\frac{\partial}
        {\partial(|H|^2)}
        \big)^{[\frac{d}{2}]-k}}\\[5pt]
    &\int_{0}^{+\infty} d(v^2)\,
        \lbrace\chi_{T}(\sqrt{v^2+|H|^2})-1\rbrace\,
        \frac{1}{v^2+|H|^2+\tau^2}\,
        R_{+}^{1-\varepsilon}(v^2).
\end{align*}
Let's first consider the case where $\varepsilon=1$, 
i.e., $d$ is odd. Then
\begin{align*}
    &\widetilde{I}_{k}(\tau,H)
    =\frac{\tau}{\pi}\,
        \textstyle{\big(-\frac{\partial}
        {\partial(|H|^2)}\big)^{\frac{d-1}{2}-k}}
        \Big\lbrace
        \big(\chi_{T}(|H|)-1\big)\,
        \frac{1}{|H|^2+\tau^2}\,
        \Big\rbrace \\[5pt]
    &=\frac{\tau}{\pi}\,
        \sum_{j+j'=\frac{d-1}{2}-k}
        \textstyle{\frac{(\frac{d-1}{2}-k)!}{j!j'!}\,
        \big(-\frac{\partial}
        {\partial(|H|^2)}\big)^{j}}
        \big(\chi_{T}(|H|)-1\big)\,
        \textstyle{\big(-\frac{\partial}
        {\partial(|H|^2)}\big)^{j'}}
        \frac{1}{|H|^2+\tau^2}.
\end{align*}
On the one hand, the expression 
$\big(-\frac{\partial}
    {\partial(|H|^2)}\big)^{j}
    \big(\chi_{T}(|H|)-1\big)$
vanishes when $|H|\le2T$. 
In addition, it is $O(T^{-2j})$.
On the other hand,
\begin{align*}
    \textstyle{\big(-\frac{\partial}
    {\partial(|H|^2)}\big)^{j'}}
    \frac{1}{|H|^2+\tau^2}
    = \frac{j'!}{(|H|^2+\tau^2)^{j'+1}}
    = O (T^{-2j'-2})
\end{align*}
when $|H|\ge2T$. We deduce that
\begin{align*}
    |\widetilde{I}_{k}(\tau,H)|
    = O(T^{2k-d}).
\end{align*}
Let's then consider the case where $\varepsilon=\frac{1}{2}$,
i.e., $d$ is even. Then
\begin{align*}
    &\widetilde{I}_{k}(\tau,H)
    =\frac{2\tau}{\pi}\,
    \sum_{j+j'=\frac{d}{2}-k}
    \textstyle{\frac{(\frac{d}{2}-k)!}{j!}}\\[5pt]
    &\times\int_{0}^{+\infty} dv\,
        \textstyle{\big(-\frac{\partial}
        {\partial(|H|^2)}\big)^{j}}
        \lbrace\chi_{T}(\sqrt{v^2+|H|^2})-1\rbrace\,
        (v^2+|H|^2+\tau^2)^{-j'-1}.
\end{align*}
Again, the expression
$\big( -\frac{\partial}
    {\partial(|H|^2)}\big)^{j}
    \lbrace\chi_{T}(\sqrt{v^2+|H|^2})-1\rbrace$
is $O(T^{-2j})$, which vanishes when
$v^2+|H|^2\le4T^2$, as well as $v^2+|H|^2\ge9T^2$ if $j>0$.
It follows that the integral above is $O(T^{2k-d-1})$
if $j>0$, and that it is estimated by
\begin{align*}
    \int_{v^2+|H|^2\ge4T^2}dv\,
    \big|v^2+|H|^2+\tau^2\big|^{k-\frac{d}{2}-1}
    \lesssim&\,
    \int_{v+|H|\ge2T}dv\,
    (v+|H|)^{2k-d-2}\\[5pt]
    \asymp&\, T^{2k-d-1}
\end{align*}
if $j=0$. 
In any case, we obtain
\begin{align*}
    |\widetilde{I}_{k}(\tau,H)|
    \lesssim\, T^{2k-d}\lesssim 1
\end{align*}
and therefore
\begin{align*}
    |E_{1}(\tau,H)|
    \lesssim J(H)^{-\frac{1}{2}}
\end{align*}
since the coefficients $U_{k}$ are bounded.
By combining with \eqref{Appendix-estim E2},
we conclude that
\begin{align*}
    |E(\tau,H)| 
    \lesssim\,|E_{1}(\tau,H)|+|E_{1}(\tau,H)|
    \lesssim\,
    T^{3(\frac{d}{2}+1)}\,
    (\log{T}-\log{s})\,e^{-\langle\rho,H\rangle}
\end{align*}
for all $H\in\frakACL$.
\end{proof}

\vspace{20pt}
\printbibliography
\vspace{20pt}

\address{
\noindent \textbf{JEAN-PHILIPPE ANKER:}
anker@univ-orleans.fr\\
Institut Denis Poisson (UMR 7013),
Universit\'e d'Orl\'eans, Universit\'e de Tours \& CNRS,
B\^atiment de math\'ematiques - Rue de Chartres,
B.P. 6759 - 45067 Orl\'eans cedex 2 - France}
\vspace{10pt}

\address{
\noindent\textbf{HONG-WEI ZHANG:}
zhw.dimn@gmail.com\\
Ghent University, Department of Mathematics, Analysis, Logic and Discrete Mathematics - Krijgslaan 281, Building S8 - B9000 Ghent - Belgium
\\and\\
Institut Denis Poisson (UMR 7013),
Universit\'e d'Orl\'eans, Universit\'e de Tours \& CNRS,
B\^atiment de math\'ematiques - Rue de Chartres,
B.P. 6759 - 45067 Orl\'eans cedex 2 - France}

\end{document}